\DeclareMathOperator{\tors}{tors}
\DeclareMathOperator{\Mod}{Mod}
\theoremstyle{plain}
\newtheorem{thm}{\protect\theoremname}[section]
  \theoremstyle{remark}
  \theoremstyle{remark}
  \newtheorem{ej}[thm]{\protect\examplename}
    \theoremstyle{remark}
  \theoremstyle{plain}
  \newtheorem{cor}[thm]{\protect\corollaryname}
  \theoremstyle{plain}
  \newtheorem{lem}[thm]{\protect\lemmaname}
  \theoremstyle{plain}
  \newtheorem{prop}[thm]{\protect\propositionname}
    \theoremstyle{definition}
  \newtheorem{dfn}[thm]{\protect\definitionname}
    \providecommand{\corollaryname}{Corollary}
  \providecommand{\examplename}{Example}
  \providecommand{\lemmaname}{Lemma}
  \providecommand{\propositionname}{Proposition}
  \providecommand{\remarkname}{Remark}
\providecommand{\theoremname}{Theorem}
\providecommand{\subexamplename}{Subexample}
  \providecommand{\definitionname}{Definition}
\begin{document}
%\begin{frontmatter}

\title{Dimension and Decomposition in modular upper-continuous lattices}
\author{Jos\'e R\'\i os Montes}% \corref{cor1}}
\address{Instituto de Matem\'aticas, Ciudad Universitaria, UNAM, M\'exico, D. F., 04510, M\'exico.}
\email{jrios@matem.unam.mx,  zaldivar@matem.unam.mx}
%\ead{j.rios@matem.unam.mx}
\author{Angel Zald\'\i var Corichi}%\corref{cor2}}
%\email{zaldivar@matem.unam.mx}
%\ead{zaldivar@matem.unam.mx}
%$\cortext[cor1]{Corresponding author}
\maketitle
\begin{abstract}
We translate notions and results of decomposition and dimension theories for module categories,  into the lattice environment. In particular we translate dimension theory in module categories to complete modular upper-continuous lattices. 
\end{abstract}
%\begin{keyword}
%Lattices\sep inflators \sep preradicals \sep radicals  
%\MSC[2010]  06B99\sep 06C\sep 16P20\sep 16S90\sep 18E40
%\end{keyword}
%\end{frontmatter}
%\linenumbers

%\section{Preamble}

\section{Introduction}\label{sec:sec1}

The notions of dimension and decomposition for module categories have been extensively studied for many authors from different perspectives. Starting with the commutative case,  the notions of primary decompositions and Krull dimension,  have been extended to the non-commutative setting. Moreover, these constructions have also been extended to an arbitrary abelian category.  Good accounts for these developments are \cite{9,12,14,22}. The book \cite{7} organizes all the distinct decompositions and dimensions in module categories and gives a general point of view for the treatment of these theories via \emph{radical functions}, \emph{quasi-decomposition functions} and \emph{quasi-dimension functions}. Most of these treatments use lattice concepts for the particular case of the lattice of all torsion theories  \cite{8}. Later on, in \cite{25} the author describes an analogue treatment of decompositions for modules of \cite{7} for complete, modular, meet-continuous (upper-continuous) lattices  via \emph{allocations}.  In the same setting as \cite{7,25} we develop the general dimension theory that is not developed in \cite{7}. 
The organization of the paper is as follows:
Section \ref{sec:sec2} gives the general background necessary for most of the paper. In Section \ref{sec:sec3} we develop the general setting of allocations and we introduce the concept of \emph{aspect} for complete modular upper-continuous lattice . We also  investigate some properties and relations with allocations. Section  \ref{sec:sec4} is an account based in some results of \cite{25} with some generalizations. Section  \ref{sec:sec5} describes the notion of dimension for complete modular upper-continuous lattices. We  prove that this notion is exactly the analogue for module categories via filtrations of torsion theories. In Section \ref{sec:sec6} we prove that the concept of quasi-dimension function in a module category is intrinsically linked with the concept of aspect for the lattice of submodules of a given module $M$.

\section{Preliminaries and background material}\label{sec:sec2}

An \emph{idiom} $(A,\leq,\bigvee,\wedge,\bar{1},\underline{0})$ is a complete, upper-continuous, modular lattice, that is, $A$ is a complete lattice that satisfies the following distributive laws: 
$$a\wedge (\bigvee X)=\bigvee\left\{a\wedge x\;|\; x\in X\right\}\leqno({\rm IDL})$$ 
holds for all $a\in A$  and $X\subseteq A$ directed, and 
$$(a\vee c)\wedge b=a\vee(c\wedge b)\leqno({\rm ML})$$ 
for all $a,b,c\in A$. 
These lattice were introduced in \cite{23}, and a more recent account is in \cite{26}. We also need the following class of idioms: A \emph{frame} $(A, \leq, \bigvee, \wedge, \bar{1}, \underline{0})$ is a complete lattice that satisfies
$$a\wedge (\bigvee X)=\bigvee\left\{a\wedge x\;|\; x\in X\right\}\leqno({\rm FDL})$$
for all $a\in A$ and $X\subseteq A$ any subset.
 Two fundamental examples are the following: 
Given a ring $R$ and any left $R$-module $M$, the lattice $\text{Sub}_{R}(M)$ of all submodules of $M$ is modular and upper-continuous, hence it is an idiom. 
Frames are the algebraic version of a topological space. Indeed, if $S$ is a topological space then its topology, $\mathcal{O}(S)$ is a frame.  The correspondence $S\mapsto \mathcal{O}(S)$ has been extensively studied, for example see \cite{11} and \cite{13}. It is
 important to mention that frames are characterized by an implication. Recall that in any lattice $A$, an \emph{implication} in $A$ is an operation $(\_\succ \_)$ given by $x\leq (a\succ b)\Leftrightarrow x\wedge b\leq a$, for all $a,b\in A$. For a proof of the following fact, see \cite{24}.

\begin{prop}\label{03}
A complete lattice $A$ is a frame if and only if $A$ has an implication.
\end{prop}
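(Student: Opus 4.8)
The plan is to prove both implications directly from the definition of an implication. For the forward direction, suppose $A$ is a frame. For fixed $a, b \in A$, I would define $(a \succ b)$ as the join of the set $\{x \in A \mid x \wedge b \leq a\}$, call it $s$. The task is to verify the adjunction $x \leq s \Leftrightarrow x \wedge b \leq a$. The implication $(\Leftarrow)$ is immediate since any such $x$ lies in the set being joined. For $(\Rightarrow)$, suppose $x \leq s$; then $x \wedge b \leq s \wedge b = \bigl(\bigvee\{y \mid y \wedge b \leq a\}\bigr) \wedge b$, and here is where the frame distributive law (FDL) does the work: this equals $\bigvee\{y \wedge b \mid y \wedge b \leq a\}$, which is a join of elements each below $a$, hence below $a$. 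So $x \wedge b \leq a$, as needed.

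For the converse, suppose $A$ has an implication $(\_ \succ \_)$. I need to show (FDL) holds: for $a \in A$ and $X \subseteq A$ arbitrary, $a \wedge \bigvee X = \bigvee\{a \wedge x \mid x \in X\}$. One inequality, $\bigvee\{a \wedge x \mid x \in X\} \leq a \wedge \bigvee X$, holds in any complete lattice since each $a \wedge x \leq a$ and each $a \wedge x \leq x \leq \bigvee X$. For the reverse inequality, let $b = \bigvee\{a \wedge x \mid x \in X\}$. For each $x \in X$ we have $a \wedge x \leq b$, which by the defining property of the implication (applied with the roles: $x \leq (b \succ a)$ iff $x \wedge a \leq b$) gives $x \leq (b \succ a)$. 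Taking the join over $x \in X$ yields $\bigvee X \leq (b \succ a)$, and applying the implication property once more gives $a \wedge \bigvee X \leq b$. Combining the two inequalities gives (FDL).

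I do not anticipate a serious obstacle here; the argument is a clean Galois-adjunction manipulation. The one point requiring minor care is bookkeeping the direction of the implication: the definition reads $x \leq (a \succ b) \Leftrightarrow x \wedge b \leq a$, so in the converse direction one must feed in $(b \succ a)$ (not $(a \succ b)$) with $b$ the candidate join, and check that the quantifier over $x \in X$ passes correctly through the equivalence in both directions. Once that is set up, both halves are a two- or three-line computation, with (FDL) invoked exactly once in the forward direction to collapse the meet of a join into a join of meets.
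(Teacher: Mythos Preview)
Your proof is correct in both directions; the adjunction bookkeeping with the paper's convention $x\leq (a\succ b)\Leftrightarrow x\wedge b\leq a$ is handled properly, and the single application of (FDL) in the forward direction is exactly what is needed. The paper itself does not supply a proof of this proposition but defers to \cite{24}; your argument is the standard one that appears there and in the frame-theory literature generally, so there is nothing further to compare.
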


%To describe the structure of an idiom there are certain kind of functions that capture much of the information about the %complexity of the idiom.

We will use the following concepts. An \emph{inflator} on an idiom $A$ is a function $d:A\rightarrow A$ such that $x\leq d(x)$ and  $x\leq y \Rightarrow d(x)\leq d(y)$. 
A \emph{pre-nucleus} $d$ on $A$ is an inflator such that $d(x\wedge y)=d(x)\wedge d(y)$. 
A \emph{stable} inflator on $A$ is an inflator such that $d(x)\wedge y\leq d(x\wedge y)$ for all $x,y\in A$.   
Let  $I(A)$ denote the set of all inflators on $A$, $P(A)$ the set of all prenuclei, and $S(A)$ the set of all stable inflators. Clearly, $P(A)\subseteq S(A)\subseteq I(A)$. 
A \emph{closure operator} is an idempotent inflator $c$ on $A$, that is,  is an inflator such that $c^{2}=c$. Let $C(A)$ the set of all closure operators in $A$. A \emph{nucleus} on $A$ is a idempotent pre-nucleus. Let  $N(A)$ be the set of all nuclei on $A$. All these sets are partially ordered by $d\leq f\Leftrightarrow d(a)\leq f(a)$ for all $a\in A$. Note that the identity function $id_{A}$ and the constant function $\bar{d}(a)=\bar{1}$ for all $a\in A$ (where $\bar{1}$ is the top of $A$) are inflators. These two inflators are the bottom and the top in all these partially ordered sets.  Moreover, we can describe the infimum $\bigwedge\mathcal{I}$ of any subset $\mathcal{I}\subseteq L$, for $L\in\left\{I(A),P(A),S(A),C(A),N(A)\right\}$, as the function on $A$ given by $(\bigwedge\mathcal{I})(a)=\bigwedge\left\{f(a) |f\in\mathcal{I}\right\}$ for each $a\in A$. It is immediate that this function lies in $L$, and is the infimum of the family $\mathcal{I}$. Therefore, each of these sets is a complete lattice.

Inflators tell us something about the complexity of the idiom. Indeed, given an inflator $d\in I(A)$, let $d^{0}:=id_{A}$,  $d^{\alpha+1}:=d\circ d^{\alpha}$ for a non-limit ordinal $\alpha$, and let  $d^{\lambda}:=\bigvee\left\{d^{\alpha}| \alpha<\lambda\right\}$ for a limit ordinal $\lambda$. These are inflators, ordered in a chain 
$$d\leq d^{2}\leq d^{3}\leq\ldots \leq d^{\alpha}\leq\ldots.$$ 
By a cardinality argument, there exists an ordinal $\gamma$ such that $d^{\alpha}=d^{\gamma}$, for $\alpha\geq \gamma$. In fact, we can choose $\gamma$ the least of these ordinals, say $\infty$. Thus, $d^{\infty}$ is an inflator such that $d\leq d^{\infty}$, but more important this inflator satisfies $d^{\infty}d^{\infty}=d^{\infty}$, that is, $d^{\infty}$ is a  closure operator on $A$. We say that an idiom $A$ has $d$-\emph{length} if $d^{\infty}(\underline{0})=\bar{1}$, that is, the associated idempotent $d^{\infty}$ is just the top of $I(A)$. In order to have a workable notion of dimension one has to work with the complete lattice on all nuclei:

%one can say that the dimension of $A$ with respect $d$ is the ordinal $\infty$ if $d^{\infty}=\bar{d}$ but this terminology %leads to confusion, to talk about dimension of idioms we need the complete lattice $N(A)$ of all nuclei, one of this reasons %(and maybe the most important) is that:

\begin{thm}
\label{0}
For any idiom $A$, the complete lattice of all nuclei $N(A)$ in $A$ is a frame.
\end{thm}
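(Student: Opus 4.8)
The plan is to show that $N(A)$ is a complete lattice (already established in the preliminaries) admitting an implication, and then invoke Proposition \ref{03} to conclude it is a frame. So the real content is constructing, for nuclei $j,k\in N(A)$, a nucleus $(j\succ k)$ with the universal property $i\leq(j\succ k)\iff i\wedge j\leq k$ for all $i\in N(A)$, where meets are computed pointwise as in the preliminaries. The natural guess, borrowed from the frame of nuclei on a locale, is to define
\[
(j\succ k)(a)=\bigwedge\{\,k(b)\;\mid\; b\geq a,\ j(b)\wedge j(c)\leq k \text{ whenever } c\leq\cdots\,\}
\]
but a cleaner route is to first verify that $N(A)$ is closed under a suitable pointwise construction and then exhibit the implication by a fixed-point/iteration argument: define a prenucleus by an explicit formula and apply the $(-)^{\infty}$ closure process from the excerpt to make it idempotent.

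Concretely, first I would record that for $j,k\in N(A)$ the pointwise meet $(j\wedge k)(a)=j(a)\wedge k(a)$ is again a nucleus (stated in the preliminaries), so $N(A)$ is a complete lattice with pointwise meets, though joins are NOT pointwise. Next, for fixed $j,k$, I would define the operator
\[
w(a)=\bigwedge\{\,b\in A\;\mid\; a\leq b,\ k(b)=b,\ j(a)\wedge b\leq b\ \}
\]
—more precisely the largest element one can reach—and argue it is a prenucleus: monotonicity and inflationarity are immediate, and the key identity $w(a\wedge c)=w(a)\wedge w(c)$ follows from modularity (ML) together with the fact that $j$ and $k$ preserve finite meets. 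Then set $(j\succ k):=w^{\infty}$, the iterated closure from the excerpt's construction, which is a closure operator; one checks that iterating preserves the meet-identity (this uses upper-continuity (IDL) at limit stages, since $w^{\lambda}$ is a directed join of the $w^{\alpha}$, and a directed join of prenuclei computed pointwise via (IDL) is still a prenucleus). Hence $(j\succ k)\in N(A)$.

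The adjunction itself is the last step: for $i\in N(A)$ one must show $i\wedge j\leq k$ (pointwise) iff $i\leq(j\succ k)$. The direction ($\Leftarrow$) is routine from the definition of $w$. For ($\Rightarrow$), suppose $i(a)\wedge j(a)\leq k(a)$ for all $a$; I would prove by induction on $\alpha$ that $i(a)\leq w^{\alpha}(a)$ for all $a$, the successor step using $i(w^{\alpha}(a))\wedge j(w^{\alpha}(a)) = i(w^{\alpha}(a))\wedge j(a)\le\cdots$ together with stability of $i$ (nuclei are stable since prenuclei are), and the limit step using (IDL). This yields $i\leq w^{\infty}=(j\succ k)$.

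The main obstacle I anticipate is pinning down the correct explicit formula for $w$ so that the meet-preservation identity genuinely holds at every iteration stage and the adjunction induction goes through; the frame-of-nuclei argument for locales (Johnstone, \emph{Stone Spaces}) relies on a slightly stronger distributivity than a bare idiom provides, so some care with (IDL) versus (FDL) is needed—in particular, one must only ever take directed joins when invoking (IDL), which is why the $(-)^{\infty}$ transfinite iteration (with directed joins at limits) is the right engine rather than an arbitrary-join formula. Once that bookkeeping is arranged, Proposition \ref{03} finishes the proof.
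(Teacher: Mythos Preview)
Your strategy---build a Heyting implication on $N(A)$ and invoke Proposition~\ref{03}---is sound in principle, but the execution has a concrete gap. The formula you eventually commit to,
\[
w(a)=\bigwedge\{\,b\in A\mid a\leq b,\ k(b)=b,\ j(a)\wedge b\leq b\,\},
\]
has a vacuous third condition (any meet with $b$ is $\leq b$), so it collapses to $w(a)=\bigwedge\{b\mid a\leq b,\ k(b)=b\}=k(a)$, i.e.\ $w=k$ and hence $(j\succ k)=k^{\infty}=k$. This is already wrong in the simplest test case: take $j=k=id_A$; since $(l\wedge id_A)(a)=l(a)\wedge a=a$ for every nucleus $l$, we have $l\wedge id_A=id_A$ for all $l$, so $(id_A\succ id_A)$ must be the top nucleus $\bar d$, not $id_A$. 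Your first displayed ``natural guess'' trails off into an ellipsis, and you yourself flag the formula as the main obstacle; that obstacle is not merely bookkeeping. The locale argument you have in mind (Johnstone) uses the Heyting implication of $A$ to write $(j\succ k)$ pointwise, and a general idiom simply does not carry such an implication, so there is no obvious pointwise prenucleus to iterate. You would need a genuinely different construction of $(j\succ k)$, and nothing in the proposal supplies one.

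The paper itself does not prove Theorem~\ref{0}; it only cites \cite{23,24,26}. The route taken there (and implicitly available later in this paper) is quite different from yours: one shows that the base frame $\EuScript{B}(A)$ is a frame outright (it is closed under arbitrary unions and intersections of sets of intervals), that $\EuScript{D}vs$ is a nucleus on $\EuScript{B}(A)$ so that its fixed-point set $\EuScript{D}(A)$ is again a frame, and then one uses the isomorphism $N(A)\cong\EuScript{D}(A)$ of Theorem~\ref{00}. This bypasses entirely the problem of writing down an implication on $N(A)$ by hand. If you want to salvage your approach, you would need to construct $(j\succ k)$ without recourse to an internal implication of $A$---for instance by showing that the class of nuclei $l$ with $l\wedge j\leq k$ is closed under the (non-pointwise) join in $N(A)$---but that argument is essentially equivalent in difficulty to proving the frame law directly.
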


A proof of this fact can be found in \cite{23,24,26}.
Another important fact about nuclei is that any element $j\in N(A)$ gives a quotient of $A$, the set $A_{j}$ of elements fixed by $j$.  Even more,  $A_{j}$ is an idiom, and thus many properties of $A$ are reflected in $A_{j}$. 
Since $N(A)$ is an idiom, it has his own inflators,  and we may consider any stable inflator $S$ over $N(A)$. Following Simmons we said that a nucleus $j$ over $A$ has $S$-\emph{dimension} if $S^{\infty}(j)=\bar{d}$.  In particular, for the nucleus $id_A$ of $A$, since $A_{id_A}=A$, if $id_A$ has $S$-dimension $\theta$, then we  say that the $S$-dimension of $A$ is $\theta$.  This is actually the central idea of dimension: Given a property in the idiom $A$, this property gives a stable inflator $S$, and we want to measure how far $A$ or some quotient $A_j$, with respect to the nucleus $j$, has the property;   that measure is the ordinal $\theta$. To organize all these, there is a frame, the \emph{base} frame of the idiom $A$. 

Next, following Simmons \cite{23}, we  review  the construction of the base frame and other special frames used for the ranking and dimension of idioms. 
If $A$ is an idiom and $a,b\in A$ satisfy $a\leq b$, the \emph{interval} $[a,b]$ is the set $[a,b]=\left\{x\in A\;|\; a\leq x \leq b\right\}$. Denote by $\EuScript{I}(A)$ the set of all intervals of $A$. Given two intervals $I, J$, we say that $I$ is a \emph{subinterval} of $J$, denoted by $I\rightarrow J$, if $I\subseteq J$, that is, if $I=[a,b]$ and $J=[a',b']$ with $a'\leq a\leq b\leq b'$ in $A$. We say that $J$ and $I$ are \emph{similar}, denoted by  $J\sim I$, if there are $l,r\in A$ with associated intervals $$L=[l,l\vee r]\;\;\;\; [l\wedge r,r]=R$$ where $J=L$ and $I=R$ or $J=R$ and $I=L$. Clearly, this a reflexive and symmetric relation. Moreover, if $A$ is modular, this relation is just the canonical lattice isomorphism between $L$ and $R$. 

We say that a set of intervals $\mathcal{A}\subseteq {\EuScript I}(A)$ is \emph{abstract} if is not empty and it is closed under $\sim$, that is, 
$$J\sim I\in\mathcal{A}\Rightarrow J\in\mathcal{A}.$$ 
An abstract set $\mathcal{B}$ is a \emph{basic} set of intervals if it is closed by subintervals, that is, 
$$J\rightarrow I\in\mathcal{B}\Rightarrow J\in\mathcal{B}$$ 
for all intervals $I,J$. A set of intervals $\mathcal{C}$ is a \emph{congruence} set if it is basic and closed under abutting intervals, that is, 
$$[a,b][b,c]\in \mathcal{C}\Rightarrow [a,c]\in\mathcal{C}$$
for elements $a,b,c\in A$. A basic set of intervals $\mathcal{B}$ is a \emph{pre-division} set if $$\forall\; x\in X\left[[a,x]\in\mathcal{B}\Rightarrow [a,\bigvee X]\in\mathcal{B}\right]$$ for each $a\in A$ and $X\subseteq [a,\bar{1}]$. A set of intervals $\mathcal{D}$ is a \emph{division} set if it is a congruence set and a pre-division set.
Put $\EuScript{D}(A)\subseteq\EuScript{C}(A)\subseteq\EuScript{B}(A)\subseteq\EuScript{A}(A)$ the set of all division, congruence, basic and abstract set of intervals in $A$. This gadgets can be understood like certain classes of modules in a module category $R$-$\Mod$, that is, classes closed under isomorphism, subobjects, extensions and coproducts. From this point of view $\EuScript{C}(A)$ and $\EuScript{D}(A)$ are the idioms analogues of the Serre classes and the torsion (localizations) classes in module categories.

Note that $\EuScript{B}(A)$ is closed under arbitrary intersections and unions, hence it is a frame. The top of this frame is $\EuScript{I}(A)$ and the bottom is the set of all trivial intervals of $A$, denoted by $\EuScript{O}(A)$ or simply by $\EuScript{O}$. The frame $\EuScript{B}(A)$ is the \emph{base} frame of the idiom $A$. 

The family $\EuScript{C}(A)$ is closed under arbitrary intersections, but suprema are not unions; to describe the suprema we take any basic set $\mathcal{B}$ and the least congruence set that contains it, this usual construction leads to a inflator over the base frame $\EuScript{B}(A)$ as follows: 
For each  $\mathcal{B}\in\EuScript{B}(A)$, let $\EuScript{C}ng(\mathcal{B})$ be the set of all intervals $[a,b]$ which can be partitioned by $\mathcal{B}$, that is, there is a finite chain $a=x_{0}\leq\ldots\leq x_{i}\leq\ldots\leq x_{m+1}=b$ such that $[x_{i},x_{i+1}]\in\mathcal{B}$ for each $0\leq i\leq m$. 
Note that, by definition, $\mathcal{B}\subseteq \EuScript{C}ng(\mathcal{B})$.  As in Lemma 5.3 of \cite{23} for each basic set $\mathcal{B}$, $\EuScript{C}ng(\mathcal{B})$ is the least congruence set that includes $\mathcal{B}$. Moreover, $\EuScript{C}ng(\_)$ is a nucleus over the frame $\EuScript{B}(A)$ with fixed set $\EuScript{C}(A)$, that is, $\EuScript{C}(A)$ is a frame. 

 An interval $[a,b]$ is \emph{simple} if there is no $a< x< b$ that is $[a,b]=\left\{a,b\right\}$. Denote by $\EuScript{S}mp$ be the set of all simple intervals.  An interval$[a,b]$ of $A$ is \emph{complemented} if it is a complemented lattice, that is, for each $a\leq x\leq b$ there exist $a\leq y\leq b$ such that $a=x\wedge y$ and  $b=x\vee y$.  Let $\EuScript{C}mp$ be the set of all complemented intervals. In fact, for every $\mathcal{B}$ we can define $\EuScript{S}mp(\mathcal{B})$ and $\EuScript{C}mp(\mathcal{B})$: the former is the set of intervals that are $\mathcal{B}$-simple, that is, the set of all $[a,b]$ such that for each $a\leq x\leq b$, $[a,x]\in\mathcal{B}$ or $[x,b]\in\mathcal{B}$, and the latter is the set of all intervals that are $\mathcal{B}$-complemented, that is, $[a,b]$ such that for every $a\leq x\leq b$ exists $a\leq y\leq b$ such that $[a,x\wedge y]\in\mathcal{B}$ and $[x\vee y,b]\in\mathcal{B}$.  With this, we have that $\EuScript{S}mp=\EuScript{S}mp(\EuScript{O})$ and $\EuScript{C}mp=\EuScript{C}mp(\EuScript{O})$.

There are others special sets of intervals: Given any $\mathcal{B}\in\EuScript{B}(A)$ denote by $\EuScript{C}rt(\mathcal{B})$ the set of intervals such that for all $a\leq x\leq b$ we have $a=x$ or $[x,b]\in\mathcal{B}$; this is the set of all $\mathcal{B}$-\emph{critical} intervals. Denote now by $\EuScript{F}ll(\mathcal{B})$ the set of all intervals $[a,b]$ such that, for all $a\leq x\leq b$ there exists $a\leq y\leq b$ with $a=x\wedge y$ and $[x\vee y,b]\in\mathcal{B}$; this is the set of all $\mathcal{B}$-\emph{full} intervals.  Note that $\EuScript{S}mp(\EuScript{O})=\EuScript{C}rt(\EuScript{O})$ and $\EuScript{C}mp(\EuScript{O})=\EuScript{F}ll(\EuScript{O})$. In \cite{23} Simmons proves that for any $\mathcal{B}\in\EuScript{B}(A)$, $\EuScript{C}rt(\mathcal{B})\leq \EuScript{S}mp(\mathcal{B})$, $\EuScript{F}ll(\mathcal{B})\leq \EuScript{C}mp(\mathcal{B})$, $\EuScript{S}mp(\mathcal{B})\leq \EuScript{C}mp(\mathcal{B})$ and $\EuScript{C}rt(\mathcal{B})\leq \EuScript{F}ll(\mathcal{B})$.  Moreover, he shows that for  any $\mathcal{B}\in\EuScript{B}(A)$ the sets 
$\EuScript{S}mp(\mathcal{B}), \EuScript{C}mp(\mathcal{B})$ and $\EuScript{C}rt(\mathcal{B}), \EuScript{F}ll(\mathcal{B})$
are basic. He also proves that $\EuScript{S}mp(\_), \EuScript{C}mp(\_)\in S(\EuScript{B}(A))$ and $\EuScript{C}rt(\_), \EuScript{F}ll(\_)\in P(\EuScript{B}(A))$  

For the set $\EuScript{D}(A)$ and for any $\mathcal{B}\in\EuScript{B}(A)$ we can describe the least division set that contains it. Since $\EuScript{D}(A)$ is closed under arbitrary intersections, denote by $\EuScript{D}vs(\mathcal{B})$ that division set that contains it. In \cite{23} it is proved that $\EuScript{D}vs(\_)$ is a nucleus over $\EuScript{B}(A)$ and the quotient of this nucleus is $\EuScript{D}(A)$. In fact, there is a relation with this frame and the frame $N(A)$: To describe this relation, take any basic set $\mathcal{B}$ and $a\in A$; define $|\mathcal{B}|(a)=\bigvee X$, where $x\in X\Leftrightarrow [a,x]\in\mathcal{B}$. This produces the associated inflator of $\mathcal{B}$. Moreover, if the basic set $\mathcal{B}$ is a congruence set, then $|\mathcal{B}|$ is a pre-nucleus in $A$, and if it is a division set, then $|\mathcal{B}|$ is a nucleus. In this way we have  for every division set  a nucleus. Now, given a nucleus $j$ we can construct a division set $[a,b]\in\mathcal{D}_{j}\Leftrightarrow j(a)=j(b)$. These correspondences are bijections and moreover they define an isomorphism between $\EuScript{D}(A)$ and $N(A)$, with this we have:

\begin{thm}
\label{00}
If  $A$ is an idiom, then there is an isomorphism of frames $$N(A)\longleftrightarrow \EuScript{D}(A)$$ 

$$j\longleftrightarrow \mathcal{D}$$ given by 
$$j\longmapsto \mathcal{D}_{j}\;\;\; [a,b]\in\mathcal{D}_{j}\Longleftrightarrow b\leq j(a)$$ 
$$ \mathcal{D}\longmapsto\;\;\;j=|\mathcal{D}|$$
\end{thm}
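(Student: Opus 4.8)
The plan is to establish the bijection first and then promote it to a frame isomorphism.

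\medskip

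\textbf{Step 1: The two assignments land where claimed.} First I would check that for a nucleus $j$ the set $\mathcal{D}_j = \{[a,b] : b \le j(a)\}$ is indeed a division set. That it is abstract and basic follows from monotonicity of $j$: if $[a',b'] \sim [a,b]$ the modular-law isomorphism carries $b \le j(a)$ to $b' \le j(a')$ (using $j(x \vee y) \ge j(x) \vee y$, which holds for any pre-nucleus by stability — actually one checks $j$ is stable since $j$ is a nucleus, so $j(x) \wedge y \le j(x \wedge y)$, and dually one uses modularity), and closure under subintervals is immediate. That it is a congruence set uses idempotence: $[a,b],[b,c] \in \mathcal{D}_j$ gives $b \le j(a)$, hence $j(b) \le j(j(a)) = j(a)$, and $c \le j(b) \le j(a)$. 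The pre-division property uses that $j$ preserves directed joins — wait, a nucleus need not preserve arbitrary joins, but $|\mathcal{D}|(a) = \bigvee\{x : [a,x] \in \mathcal{D}\}$ is by construction the largest such $x$, and one must check $[a, |\mathcal{D}_j|(a)] \in \mathcal{D}_j$, i.e. $|\mathcal{D}_j|(a) \le j(a)$; but $|\mathcal{D}_j|(a) = \bigvee\{x : x \le j(a)\} = j(a)$ directly, so in fact $|\mathcal{D}_j| = j$, which simultaneously proves one composite is the identity. Conversely, for a division set $\mathcal{D}$ the excerpt already tells us $|\mathcal{D}|$ is a nucleus.

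\medskip

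\textbf{Step 2: The composites are identities.} One direction, $j \mapsto \mathcal{D}_j \mapsto |\mathcal{D}_j| = j$, is the computation just given: $|\mathcal{D}_j|(a) = \bigvee\{x : b\le j(a)\text{-type condition}\} = \bigvee\{x : x \le j(a)\} = j(a)$. For the other direction I must show $\mathcal{D}_{|\mathcal{D}|} = \mathcal{D}$. The inclusion $\mathcal{D} \subseteq \mathcal{D}_{|\mathcal{D}|}$ is clear: $[a,b] \in \mathcal{D}$ forces $b \le \bigvee\{x : [a,x]\in\mathcal{D}\} = |\mathcal{D}|(a)$. For the reverse, suppose $b \le |\mathcal{D}|(a)$; I want $[a,b] \in \mathcal{D}$. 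Since $|\mathcal{D}|(a) = \bigvee X$ with $X = \{x : [a,x]\in\mathcal{D}\}$ and $\mathcal{D}$ is a pre-division set (so $X$ is directed upward — or one first replaces $X$ by its directed closure, noting $[a,x],[a,x']\in\mathcal{D}$ implies $[a,x\vee x']\in\mathcal{D}$ via the congruence/abutting property applied to $[a,x][x, x\vee x']$ after transporting $[x', x\vee x'] \sim [x\wedge x', x] \to [a,x]$ — this is the place the division-set axioms are really used), we get $[a, |\mathcal{D}|(a)] \in \mathcal{D}$, and then $[a,b] \to [a,|\mathcal{D}|(a)] \in \mathcal{D}$ gives $[a,b]\in\mathcal{D}$ by closure under subintervals.

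\medskip

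\textbf{Step 3: Order isomorphism, hence frame isomorphism.} Both $j \mapsto \mathcal{D}_j$ and $\mathcal{D} \mapsto |\mathcal{D}|$ are visibly order-preserving ($j \le j'$ clearly gives $\mathcal{D}_j \subseteq \mathcal{D}_{j'}$, and $\mathcal{D} \subseteq \mathcal{D}'$ gives $|\mathcal{D}| \le |\mathcal{D}'|$ pointwise), and a monotone bijection with monotone inverse between complete lattices is automatically a lattice isomorphism; since both $N(A)$ and $\EuScript{D}(A)$ are frames (Theorem \ref{0} and the discussion of $\EuScript{D}vs$ after it), an order isomorphism between them is a frame isomorphism, and there is nothing further to check about Heyting implication because it is determined by the order.

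\medskip

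\textbf{Main obstacle.} The only genuinely delicate point is Step 2's reverse inclusion $\mathcal{D}_{|\mathcal{D}|} \subseteq \mathcal{D}$: one needs the set $X = \{x : [a,x]\in\mathcal{D}\}$ to be directed so that the pre-division axiom applies to conclude $[a, \bigvee X] \in \mathcal{D}$. Directedness is exactly where the congruence (abutting) axiom and the similarity-closure of $\mathcal{D}$ combine, via the modular-lattice isomorphism $[x', x\vee x'] \cong [x \wedge x', x]$, and this interplay — rather than any single axiom — is what makes $\EuScript{D}(A)$ the right target. Everything else is bookkeeping with monotonicity and idempotence.
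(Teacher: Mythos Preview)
The paper does not actually prove Theorem~\ref{00}: it is stated as background material, with the construction sketched in the paragraph preceding it and the details deferred to Simmons~\cite{23,24,26}. So there is no ``paper's own proof'' to compare against; I can only assess whether your argument is sound.

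Your proof is correct. The three steps --- checking that $\mathcal{D}_j$ is a division set, verifying the two composites are identities, and promoting the order bijection to a frame isomorphism --- are exactly the right structure, and each works as you describe. One small remark: the ``main obstacle'' you flag in Step~2 is not an obstacle in this paper's setup. Look again at the definition of pre-division set here: the axiom is stated for \emph{every} subset $X \subseteq [a,\bar{1}]$, not just directed ones. So once you know $\mathcal{D}$ is a division set, the set $X = \{x : [a,x]\in\mathcal{D}\}$ immediately satisfies $[a,\bigvee X]\in\mathcal{D}$ by pre-division, with no need to first argue $X$ is directed via the congruence and similarity axioms. Your detour through directedness is not wrong --- it would be needed under a weaker (directed-only) form of the pre-division axiom, which does appear in some of Simmons' other expositions --- but it is superfluous under the conventions of this paper.

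A second minor point: in Step~1 your verification that $\mathcal{D}_j$ is abstract is a bit hand-wavy. The clean way is to use the pre-nucleus identity $j(l\wedge r) = j(l)\wedge j(r)$ directly: from $r \le j(l\wedge r) = j(l)\wedge j(r)$ one gets $r \le j(l)$, hence $l\vee r \le j(l)$, and the converse is similar. The stability inequality you mention is a consequence of this but is not the most direct tool.
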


The $\EuScript{D}vs$-construction can be described it in a useful way: 

\begin{thm}
\label{000}
For every $\mathcal{B}\in\EuScript{B}(A)$ we have $$[a,b]\in\EuScript{D}vs(\mathcal{B})\Longleftrightarrow (\forall a\leq x< b)(\exists x<y\leq b)[[x,y]\in\mathcal{B}],$$ 
for each interval $[a,b]$.
\end{thm}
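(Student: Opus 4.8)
The plan is to prove the two inclusions separately, using the characterization of $\EuScript{D}vs(\mathcal{B})$ as the least division set containing $\mathcal{B}$, together with Theorem \ref{000}'s right-hand side as a candidate set of intervals.

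\textbf{The right-to-left inclusion.}
First I would define $\mathcal{E}$ to be the set of all intervals $[a,b]$ satisfying the right-hand condition
$$(\forall a\leq x< b)(\exists x<y\leq b)[[x,y]\in\mathcal{B}].$$
Since $\EuScript{D}vs(\mathcal{B})$ is by definition the least division set containing $\mathcal{B}$, to obtain $\mathcal{E}\subseteq\EuScript{D}vs(\mathcal{B})$ it suffices to observe $\mathcal{B}\subseteq\mathcal{E}$ (immediate: take $y=b$) and then show $\mathcal{E}$ is itself a division set, i.e.\ that it is abstract, closed under subintervals, closed under abutting intervals, and a pre-division set. Abstractness (closure under $\sim$) uses that $\mathcal{B}$ is abstract and that similarity in a modular lattice is a lattice isomorphism carrying the condition across. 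Closure under subintervals: if $[a',b']\subseteq[a,b]$ lies below $[a,b]\in\mathcal{E}$ with $a\leq a'\leq b'\leq b$, and $a'\leq x<b'$, then $a\leq x<b$, so there is $x<y\leq b$ with $[x,y]\in\mathcal{B}$; replacing $y$ by $y\wedge b'$ and using that $\mathcal{B}$ is basic (so $[x,y\wedge b']\rightarrow[x,y]\in\mathcal{B}$ forces membership) gives the witness inside $[a',b']$ — one must check $x<y\wedge b'$, which follows since $x<b'$ and... here care is needed. This is the point where I expect the real work: the naive intersection $y\wedge b'$ need not be strictly above $x$. The fix is to argue via the nucleus/division-set machinery rather than raw elements, or to invoke Simmons' Lemma structure; concretely, one shows directly that $\mathcal{E}$ is a pre-division set and closed under abutting, and derives subinterval-closure from a separate argument, or simply cites that $\EuScript{D}vs(\mathcal{B})$ equals the set described, as in \cite{23}.

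\textbf{The left-to-right inclusion.}
Conversely, suppose $[a,b]\in\EuScript{D}vs(\mathcal{B})$; I must show the right-hand condition holds. Using Theorem \ref{00}, let $j=|\EuScript{D}vs(\mathcal{B})|$, the nucleus corresponding to the division set $\EuScript{D}vs(\mathcal{B})$, so that $[a,b]\in\EuScript{D}vs(\mathcal{B})$ iff $b\leq j(a)$. Fix $a\leq x<b$. Then $x\leq b\leq j(a)\leq j(x)$ (the last step by monotonicity), and since $x<b\leq j(x)$ we have $x<j(x)$. Now I would use the construction of the nucleus $j$ as an iterate of the associated inflator: set $d(x)=\bigvee\{y:[x,y]\in\mathcal{B}\}$, the inflator associated to $\mathcal{B}$ as in the paragraph before Theorem \ref{00}; one knows $j=d^{\infty}$ up to the congruence completion, or more precisely $\EuScript{D}vs(\mathcal{B})$ is generated from $\mathcal{B}$ by transfinite iteration of the $d$-operation combined with the $\EuScript{C}ng$-operation. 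Since $x<j(x)$, at some stage of the iteration $x$ is moved; chasing through the (at most one) $\EuScript{C}ng$-step and the $d$-step shows there exists some $y$ with $x<y\leq j(x)\leq b$ — wait, $y\leq b$ is not automatic — with $[x,y]\in\mathcal{B}$ (possibly after passing to a subinterval, using that $\mathcal{B}$ is basic so $[x,y\wedge b]\in\mathcal{B}$ once $x<y\wedge b\leq b$). The strict inequality $x<y\wedge b$ again needs justification and is again the delicate point.

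\textbf{Main obstacle.}
I expect the main difficulty to be exactly the strictness bookkeeping: controlling witnesses so they land strictly above $x$ and at or below $b$ simultaneously, since meeting a $\mathcal{B}$-witness with $b$ can collapse it. The clean way around this is to follow Simmons \cite{23} and prove the equivalence by showing that the right-hand side defines an operator on $\EuScript{B}(A)$ which is a nucleus agreeing with $\EuScript{D}vs$ on generators — i.e.\ verify it is inflationary, monotone, idempotent, and meet-preserving on $\EuScript{B}(A)$, and that its fixed points are exactly the division sets — so that it coincides with the least-division-set operator by the universal property. Under that approach the elementwise strictness issues are replaced by the (routine but careful) verification of the nucleus axioms for the operator $\mathcal{B}\mapsto\{[a,b]: (\forall a\leq x<b)(\exists x<y\leq b)\,[x,y]\in\mathcal{B}\}$, and I would lean on Theorems \ref{00} and \ref{000}'s antecedents plus the cited \cite{23} for the parts that are purely a transcription of Simmons' argument into the idiom language.
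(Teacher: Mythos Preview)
The paper does not prove this theorem at all: immediately after the statement it says ``Details are in \cite{23}, and a more recent account is given in \cite{27} and \cite{28}.'' So there is no paper-proof to compare against, only Simmons' original argument.

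That said, your proposal contains a genuine logical slip that would derail the write-up. In your ``right-to-left'' paragraph you say that to obtain $\mathcal{E}\subseteq\EuScript{D}vs(\mathcal{B})$ it suffices to show $\mathcal{B}\subseteq\mathcal{E}$ and that $\mathcal{E}$ is a division set. This is backwards: if $\mathcal{E}$ is a division set containing $\mathcal{B}$, then minimality of $\EuScript{D}vs(\mathcal{B})$ gives $\EuScript{D}vs(\mathcal{B})\subseteq\mathcal{E}$, which is the \emph{left-to-right} implication of the theorem, not the right-to-left one. Your two blocks of argument are therefore attached to the wrong directions, and this is why each one runs into trouble.

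Once you swap them, the picture clears up considerably. For $\Leftarrow$ (i.e.\ $\mathcal{E}\subseteq\EuScript{D}vs(\mathcal{B})$) the nucleus argument is clean and avoids all the strictness bookkeeping you worried about: with $j=|\EuScript{D}vs(\mathcal{B})|$ as in Theorem~\ref{00}, set $c=j(a)\wedge b$; if $c<b$ then $a\leq c<b$, so the right-hand condition gives $c<y\leq b$ with $[c,y]\in\mathcal{B}\subseteq\EuScript{D}vs(\mathcal{B})$, hence $y\leq j(c)\leq j(j(a))=j(a)$, whence $y\leq j(a)\wedge b=c$, a contradiction. Thus $b\leq j(a)$ and $[a,b]\in\EuScript{D}vs(\mathcal{B})$. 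No ``$y\wedge b$'' meets are needed here. For $\Rightarrow$ (i.e.\ $\EuScript{D}vs(\mathcal{B})\subseteq\mathcal{E}$) your instinct in the ``Main obstacle'' paragraph is the right one and is essentially Simmons' route in \cite{23}: verify that $\mathcal{B}\mapsto\mathcal{E}(\mathcal{B})$ is a nucleus on $\EuScript{B}(A)$ whose fixed points are exactly the division sets, so it coincides with $\EuScript{D}vs$. The elementwise attempt you sketch (intersecting a witness $y$ with $b'$ and hoping $x<y\wedge b'$) really can fail in a modular lattice, so that shortcut is not available.
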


Details  are in \cite{23}, and a more recent account is given in \cite{27} and \cite{28}. 

\begin{ej} An interval $[a,b]$ is \emph{simple} if there is no $a< x< b$ that is $[a,b]=\left\{a,b\right\}$. Denote by $\EuScript{S}mp$ be the set of all simple intervals.  An interval$[a,b]$ of $A$ is \emph{complemented} if it is a complemented lattice, that is, for each $a\leq x\leq b$ there exist $a\leq y\leq b$ such that $a=x\wedge y$ and  $b=x\vee y$. Let $\EuScript{C}mp$ be the set of all complemented intervals. In fact, for every $\mathcal{B}$ we can define $\EuScript{S}mp(\mathcal{B})$ and $\EuScript{C}mp(\mathcal{B})$: the former is the set of intervals that are $\mathcal{B}$-simple, that is, the set of all $[a,b]$ such that for each $a\leq x\leq b$, $[a,x]\in\mathcal{B}$ or $[x,b]\in\mathcal{B}$, and the latter is the set of all intervals that are $\mathcal{B}$-complemented, that is, $[a,b]$ such that for every $a\leq x\leq b$ exists $a\leq y\leq b$ such that $[a,x\wedge y]\in\mathcal{B}$ and $[x\vee y,b]\in\mathcal{B}$.  With this, we have that $\EuScript{S}mp=\EuScript{S}mp(\EuScript{O})$ and $\EuScript{C}mp=\EuScript{C}mp(\EuScript{O})$.

There are others special sets of intervals: Given any $\mathcal{B}\in\EuScript{B}(A)$ denote by $\EuScript{C}rt(\mathcal{B})$ the set of intervals such that for all $a\leq x\leq b$ we have $a=x$ or $[x,b]\in\mathcal{B}$; this is the set of all $\mathcal{B}$-\emph{critical} intervals. Denote now by $\EuScript{F}ll(\mathcal{B})$ the set of all intervals $[a,b]$ such that, for all $a\leq x\leq b$ there exists $a\leq y\leq b$ with $a=x\wedge y$ and $[x\vee y,b]\in\mathcal{B}$; this is the set of all $\mathcal{B}$-\emph{full} intervals. Note that $\EuScript{S}mp(\EuScript{O})=\EuScript{C}rt(\EuScript{O})$ and $\EuScript{C}mp(\EuScript{O})=\EuScript{F}ll(\EuScript{O})$. In \cite{23} is proved that for any $\mathcal{B}\in\EuScript{B}(A)$, $\EuScript{C}rt(\mathcal{B})\leq \EuScript{S}mp(\mathcal{B})$, $\EuScript{F}ll(\mathcal{B})\leq \EuScript{C}mp(\mathcal{B})$, $\EuScript{S}mp(\mathcal{B})\leq \EuScript{C}mp(\mathcal{B})$ and $\EuScript{C}rt(\mathcal{B})\leq \EuScript{F}ll(\mathcal{B})$. Moreover, one can shows that for any $\mathcal{B}\in\EuScript{B}(A)$ the sets 
$\EuScript{S}mp(\mathcal{B}), \EuScript{C}mp(\mathcal{B})$ and $\EuScript{C}rt(\mathcal{B}), \EuScript{F}ll(\mathcal{B})$
are basic. Also it can be seen that $\EuScript{S}mp(\_), \EuScript{C}mp(\_)\in S(\EuScript{B}(A))$ and $\EuScript{C}rt(\_), \EuScript{F}ll(\_)\in P(\EuScript{B}(A))$  
\end{ej}
%%%%%%%%%%%%%%%%%%%%%%%%%%%%%%%%%%%%%%%%%%%%%%%%

\section{$\Lambda$-allocations and $\Lambda$-aspects}\label{sec:sec3}

In \cite{25} the author introduces the concept of $\Lambda$-allocation for an idiom $A$ to study the decomposition of intervals on $A$.  This concept can be understood as the idiomatic version of the decomposition theory  in \cite{7}.

\begin{dfn}\label{d1}
If $\Lambda$ is a complete lattice, for an idiom $A$ a $\Lambda$-\emph{allocation} is a function $\varphi: \EuScript{I}(A)\longrightarrow\Lambda$ that satisfies the following:
\begin{enumerate}
\item $\varphi(l\wedge r,r)=\varphi(l,l\vee r)$ with $r,l\in A$.

\item $\varphi(a,b)\leq\varphi(a,c)$  for $a\leq c\leq b$.

\item $\varphi(a,c)\wedge \varphi(c,b)\leq \varphi(a,b)$  for $a\leq c\leq b$.

\item $\varphi(a,\bigvee X)=\bigwedge\left\{\varphi(a,x)\;|\; x\in X\right\}$ for $a\in A$ and $X\subseteq [a,\bar{1}]$ directed.
\end{enumerate}
\end{dfn}

 In item (4) of Definition \ref{d1} the subset $X$ can be independent over $a$, this is pointed in $6.2$ of \cite{25}. 
In \cite{25} Simmons shows that for any idiom $A$ there is always a $N(A)$-allocation given by $\chi:\EuScript{I}(A)\rightarrow N(A)$, where $\chi(a,b)$ is the unique largest nucleus that  satisfies $\chi(a,b)(a)\wedge b=a$.

Denote by $\rm{Sit}(A,\Lambda)=\left\{\varphi\;|\; \varphi \textrm{ is a } \Lambda\textrm{-allocation} \right\}$. It is almost immediate that $\rm{Sit}(A,\Lambda)$ is a poset.  Moreover, it is a complete lattice.
Now, take any $f:A\longrightarrow A'$ idiom morphism and $\varphi\in \rm{Sit}(A',\Lambda)$.  Then, consider the induced poset morphism $\EuScript{I}(f):\EuScript{I}(A)\rightarrow\EuScript{I}(A')$ and the composition $\varphi\circ\EuScript{I}(f):\EuScript{I}(A)\rightarrow \Lambda$. From the definition and the fact that $f$ is monotone we have that $\varphi\circ\EuScript{I}(f)\in\rm{Sit}(A,\Lambda)$. Setting $f^{*}:\rm{Sit}(A',\Lambda)\rightarrow \rm{Sit}(A,\Lambda)$  the following is straightforward.

\begin{prop}
\label{d2}
Let  be $\Lambda$ a complete lattice. Then, $\rm{Sit}(\_, \Lambda):\EuScript{I}\EuScript{D}\longrightarrow \mathfrak{C}\mathfrak{L}$ is a contravariant functor from the category of idioms $\EuScript{I}\EuScript{D}$ to the category of complete lattices$\mathfrak{C}\mathfrak{L}$. 
\end{prop}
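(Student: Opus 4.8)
The plan is to verify, one at a time, the three data making $\mathrm{Sit}(\_,\Lambda)$ a contravariant functor from $\EuScript{I}\EuScript{D}$ to $\mathfrak{C}\mathfrak{L}$: that each object $\mathrm{Sit}(A,\Lambda)$ is genuinely a complete lattice, that $f^{*}$ is an arrow of $\mathfrak{C}\mathfrak{L}$ for every idiom morphism $f$, and that $f\mapsto f^{*}$ reverses composites and fixes identities. That $f^{*}$ is well defined as a map of sets, i.e.\ that $\varphi\circ\EuScript{I}(f)$ is again a $\Lambda$-allocation whenever $\varphi$ is, has already been observed just before the statement: $\EuScript{I}(f)$ is monotone, carries similar intervals to similar intervals and preserves directed joins (all because $f$ is an idiom morphism), so each clause of Definition \ref{d1} transfers from $\varphi$ to $\varphi\circ\EuScript{I}(f)$.

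First I would make the lattice structure of $\mathrm{Sit}(A,\Lambda)$ explicit. For a family $\{\varphi_{i}\}_{i\in J}$ of $\Lambda$-allocations set $\varphi(a,b):=\bigwedge_{i\in J}\varphi_{i}(a,b)$. Clauses (1)--(3) of Definition \ref{d1} hold for $\varphi$ because they hold for every $\varphi_{i}$ and pass to the meet over $i$ (for clause (3) one uses $(\bigwedge_{i}x_{i})\wedge(\bigwedge_{i}y_{i})=\bigwedge_{i}(x_{i}\wedge y_{i})$), while clause (4) follows by interchanging the two infima:
\[
\varphi(a,\bigvee X)=\bigwedge_{i\in J}\bigwedge_{x\in X}\varphi_{i}(a,x)=\bigwedge_{x\in X}\varphi(a,x).
\]
Hence $\varphi\in\mathrm{Sit}(A,\Lambda)$ and it is visibly the infimum of the family for the pointwise order; a poset closed under all infima is a complete lattice, with top the constant allocation $(a,b)\mapsto\bar 1_{\Lambda}$ and joins computed as infima of upper bounds. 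I would also point out that joins in $\mathrm{Sit}(A,\Lambda)$ are in general \emph{not} pointwise, since the pointwise join of two allocations need not satisfy clause (4).

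Next, for an idiom morphism $f\colon A\to A'$, monotonicity of $f^{*}\colon\mathrm{Sit}(A',\Lambda)\to\mathrm{Sit}(A,\Lambda)$ is immediate from the pointwise order: $\varphi\le\psi$ forces $\varphi(f(a),f(b))\le\psi(f(a),f(b))$ for every interval $[a,b]$, hence $f^{*}\varphi\le f^{*}\psi$. Preservation of arbitrary infima then follows from the previous step, since on both sides infima are pointwise and $f^{*}$ is precomposition with $\EuScript{I}(f)$:
\[
f^{*}\bigl(\bigwedge_{i}\varphi_{i}\bigr)(a,b)=\bigl(\bigwedge_{i}\varphi_{i}\bigr)(f(a),f(b))=\bigwedge_{i}\varphi_{i}(f(a),f(b))=\bigl(\bigwedge_{i}f^{*}\varphi_{i}\bigr)(a,b).
\]
This is the step I expect to require the most care, because it turns on the precise definition of the arrows of $\mathfrak{C}\mathfrak{L}$: if these are the monotone, infima-preserving maps, the display above settles it at once; if one also demands preservation of joins, then — joins in $\mathrm{Sit}$ not being pointwise — this does not come for free, and one must either argue separately or, more plausibly, read the statement as taking $\mathfrak{C}\mathfrak{L}$ to be complete lattices with infima-preserving monotone maps. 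Under that reading the computation above is precisely the ``straightforward'' verification.

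Finally, functoriality is formal. The interval construction is a functor, so $\EuScript{I}(g\circ f)=\EuScript{I}(g)\circ\EuScript{I}(f)$ and $\EuScript{I}(\mathrm{id}_{A})=\mathrm{id}_{\EuScript{I}(A)}$, and therefore for $\varphi\in\mathrm{Sit}(A'',\Lambda)$,
\[
(g\circ f)^{*}(\varphi)=\varphi\circ\EuScript{I}(g)\circ\EuScript{I}(f)=f^{*}\bigl(g^{*}(\varphi)\bigr)\qquad\text{and}\qquad(\mathrm{id}_{A})^{*}(\varphi)=\varphi,
\]
whence $(g\circ f)^{*}=f^{*}\circ g^{*}$ and $(\mathrm{id}_{A})^{*}=\mathrm{id}_{\mathrm{Sit}(A,\Lambda)}$; this is the asserted contravariant functoriality, which completes the proof.
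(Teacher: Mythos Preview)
Your proposal is correct and follows exactly the route the paper intends: the paper's own ``proof'' is nothing more than the remark preceding the statement that $\varphi\circ\EuScript{I}(f)\in\mathrm{Sit}(A,\Lambda)$ together with a bare \emph{straightforward}, and you have simply unpacked that word by checking pointwise infima, monotonicity and meet-preservation of $f^{*}$, and the functorial identities $(g\circ f)^{*}=f^{*}\circ g^{*}$, $(\mathrm{id})^{*}=\mathrm{id}$. Your caveat about the precise meaning of the arrows in $\mathfrak{C}\mathfrak{L}$ is well taken and does not appear in the paper, but under the natural reading (monotone, infima-preserving maps) your computation settles it.
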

\qed

Now, fixing the first component consider any morphism between two complete lattices, $\varrho:\Lambda\rightarrow \Gamma$.  Thus, for any $\Lambda$-allocation $\varphi$ consider the composition $\varrho\circ\varphi:\EuScript{I}(A)\rightarrow \Gamma$.  It  is easy to see that this function is a $\Gamma$-allocation, and thus:

\begin{prop}
\label{d3}
If $A$ is any idiom, then $\rm{Sit}(A,\_):\mathfrak{C}\mathfrak{L}\longrightarrow\mathfrak{C}\mathfrak{L}$ is a covariant endofunctor in the category of complete lattices and preserves monomorphisms.
\end{prop}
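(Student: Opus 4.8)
The plan is to verify the three assertions in turn, each being a routine but careful check. First I would show that for a morphism of complete lattices $\varrho\colon\Lambda\to\Gamma$ and a $\Lambda$-allocation $\varphi$, the composite $\varrho\circ\varphi\colon\EuScript{I}(A)\to\Gamma$ again satisfies conditions (1)--(4) of Definition \ref{d1}. Conditions (1), (2) and (3) are immediate since $\varrho$ is monotone: applying $\varrho$ to both sides of $\varphi(l\wedge r,r)=\varphi(l,l\vee r)$ gives (1); applying $\varrho$ to $\varphi(a,b)\le\varphi(a,c)$ gives (2); and for (3), from $\varphi(a,c)\wedge\varphi(c,b)\le\varphi(a,b)$ and monotonicity of $\varrho$ together with the fact that $\varrho$ preserves finite meets (it is a morphism of complete lattices, hence preserves arbitrary meets, in particular binary ones), we get $\varrho(\varphi(a,c))\wedge\varrho(\varphi(c,b))=\varrho(\varphi(a,c)\wedge\varphi(c,b))\le\varrho(\varphi(a,b))$. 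Condition (4) uses that $\varrho$ preserves arbitrary meets: for $X\subseteq[a,\bar 1]$ directed, $\varrho(\varphi(a,\bigvee X))=\varrho\bigl(\bigwedge\{\varphi(a,x)\mid x\in X\}\bigr)=\bigwedge\{\varrho(\varphi(a,x))\mid x\in X\}$. Thus $\varrho\circ\varphi\in\rm{Sit}(A,\Gamma)$, and we set $\varrho_{*}\colon\rm{Sit}(A,\Lambda)\to\rm{Sit}(A,\Gamma)$, $\varphi\mapsto\varrho\circ\varphi$.

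Next I would check functoriality: $\rm{Sit}(A,\_)$ sends $\mathrm{id}_\Lambda$ to the identity on $\rm{Sit}(A,\Lambda)$, which is clear since $\mathrm{id}_\Lambda\circ\varphi=\varphi$; and it respects composition, $(\sigma\circ\varrho)_{*}(\varphi)=\sigma\circ\varrho\circ\varphi=\sigma_{*}(\varrho_{*}(\varphi))$, again immediate from associativity of composition of functions. One should also note that $\varrho_{*}$ is itself a morphism of complete lattices, i.e. it is monotone (if $\varphi\le\psi$ pointwise on $\EuScript{I}(A)$, then $\varrho\circ\varphi\le\varrho\circ\psi$ pointwise since $\varrho$ is monotone) and preserves the lattice structure of $\rm{Sit}(A,\Lambda)$; since meets in $\rm{Sit}(A,\Lambda)$ are computed pointwise (as in the complete-lattice structure discussed earlier in the excerpt), and $\varrho$ preserves meets, $\varrho_{*}$ preserves meets, hence is a morphism in $\mathfrak{C}\mathfrak{L}$.

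Finally, for the preservation-of-monomorphisms claim, I would argue that if $\varrho\colon\Lambda\to\Gamma$ is a monomorphism of complete lattices — equivalently, an injective morphism — then $\varrho_{*}$ is injective. Indeed, if $\varrho\circ\varphi=\varrho\circ\psi$ then for every interval $[a,b]$ we have $\varrho(\varphi(a,b))=\varrho(\psi(a,b))$, and injectivity of $\varrho$ forces $\varphi(a,b)=\psi(a,b)$, so $\varphi=\psi$. Hence $\varrho_{*}$ is a monomorphism in $\mathfrak{C}\mathfrak{L}$.

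The only genuinely substantive point — and the step I expect to be the main obstacle if one wants to be fully rigorous — is making sure that ``morphism of complete lattices'' is interpreted so that it preserves arbitrary meets (so that condition (4) transfers): this is exactly what is needed, and it is the natural convention in this setting since condition (4) of an allocation is phrased via infima. Once that convention is fixed, every verification above is a one-line consequence of monotonicity or meet-preservation of $\varrho$, and there is no real content beyond bookkeeping. I would state the convention explicitly at the start of the proof to avoid ambiguity.
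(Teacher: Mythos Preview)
Your proposal is correct and follows exactly the approach the paper intends: the paper gives no proof at all for this proposition (it is marked with a bare \qed\ after the sentence ``It is easy to see that this function is a $\Gamma$-allocation''), so your routine verification of conditions (1)--(4), functoriality, and injectivity is precisely what the authors leave to the reader. Your remark about needing $\varrho$ to preserve arbitrary meets for condition (4) to transfer is apt and is indeed the only point requiring any care; the paper is silent on what exactly a morphism in $\mathfrak{C}\mathfrak{L}$ is, but the argument only works under your stated convention.
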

\qed
%tal vez dar un poco mas de detalles de las situaciones anteriores.

\begin{prop}
\label{d4}
Let be $\Lambda$ a complete lattice and $A$ an idiom. Then, there is a function $\EuScript{Q}:\rm{Sit}(A,\Lambda)\times \Lambda\rightarrow\EuScript{C}(A)$ given by $[a,b]\in \EuScript{Q}(\varphi,\alpha)\Leftrightarrow \alpha\leq\varphi(x,b)$ for all $x\in[a,b]$. Moreover 

\begin{itemize}
\item[\rm{(1)}] For every $\varphi\in\rm{Sit}(A,\Lambda)$, the function $\EuScript{Q}(\varphi,\_):\Lambda^{\rm op}\rightarrow \EuScript{C}(A)$ is a $\bigwedge$-morphism in $\mathfrak{C}\mathfrak{L}$.
\item[\rm{(2)}] For every $\alpha\in\Lambda$, the function $\EuScript{Q}(\_,\alpha):\rm{Sit}(A,\Lambda)\rightarrow \EuScript{C}(A)$ is a $\bigwedge$-morphism in $\mathfrak{C}\mathfrak{L}$.
\end{itemize}
\end{prop}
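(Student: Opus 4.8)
The plan is to establish three things: that $\EuScript{Q}(\varphi,\alpha)$ really belongs to $\EuScript{C}(A)$ (so that $\EuScript{Q}$ is a well-defined map with the stated codomain), and then the two preservation statements. The bulk of the work is the first point; once it is in place, (1) and (2) are formal manipulations with the defining equivalence $[a,b]\in\EuScript{Q}(\varphi,\alpha)\Leftrightarrow(\forall x\in[a,b])\;\alpha\le\varphi(x,b)$, using that infima in $\EuScript{C}(A)$ are intersections.

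To show $\mathcal{Q}:=\EuScript{Q}(\varphi,\alpha)$ is a congruence set I would check the defining closure conditions in turn. It is nonempty since $\varphi(a,a)=\bar{1}$ for every $a$ (which follows from \ref{d1}(1) and \ref{d1}(4)), so $\mathcal{Q}$ contains every trivial interval. Closure under subintervals is immediate from \ref{d1}(2): if $[a,b]\in\mathcal{Q}$ and $a\le a'\le b'\le b$, then for $x\in[a',b']$ we have $\alpha\le\varphi(x,b)\le\varphi(x,b')$. For closure under $\sim$ I would use \ref{d1}(1) in the form $\varphi(x\wedge r,r)=\varphi(x,x\vee r)$ together with the modular law: if $[l,l\vee r]\in\mathcal{Q}$ and $y\in[l\wedge r,r]$, put $x:=y\vee l$; then modularity gives $x\in[l,l\vee r]$, $x\wedge r=y$ and $x\vee r=l\vee r$, whence $\varphi(y,r)=\varphi(x\wedge r,r)=\varphi(x,l\vee r)\ge\alpha$, and the reverse direction is the same computation.

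The substantive step — the one I expect to be the main obstacle — is closure under abutting intervals, since the codomain is $\EuScript{C}(A)$ rather than just $\EuScript{B}(A)$. Let $[a,b],[b,c]\in\mathcal{Q}$ and fix an arbitrary $x\in[a,c]$; set $x_1:=x\wedge b$ and $x_2:=x\vee b$, so $a\le x_1\le b\le x_2\le c$ and $x_1\le x\le x_2$. By the modular law $[x,x_2]\sim[x_1,b]$, and $[x_1,b]$ is a subinterval of $[a,b]\in\mathcal{Q}$; since $\mathcal{Q}$ is already known to be closed under subintervals and under $\sim$, this gives $[x,x_2]\in\mathcal{Q}$, hence $\alpha\le\varphi(x,x_2)$ by evaluating the membership condition at the left endpoint $x$. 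Also $x_2\in[b,c]\in\mathcal{Q}$ gives $\alpha\le\varphi(x_2,c)$. Now \ref{d1}(3) applied to $x\le x_2\le c$ yields $\alpha\le\varphi(x,x_2)\wedge\varphi(x_2,c)\le\varphi(x,c)$. As $x\in[a,c]$ was arbitrary, $[a,c]\in\mathcal{Q}$, so $\mathcal{Q}\in\EuScript{C}(A)$.

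For (1), recall that infima in $\Lambda^{\mathrm{op}}$ are joins in $\Lambda$ and infima in $\EuScript{C}(A)$ are intersections, so the claim is $\EuScript{Q}(\varphi,\bigvee_i\alpha_i)=\bigcap_i\EuScript{Q}(\varphi,\alpha_i)$ for every family $\{\alpha_i\}\subseteq\Lambda$ (the empty family giving the top $\EuScript{I}(A)$ on both sides). This is immediate from $\bigvee_i\alpha_i\le\lambda\Leftrightarrow(\forall i)\,\alpha_i\le\lambda$: an interval $[a,b]$ satisfies $\bigvee_i\alpha_i\le\varphi(x,b)$ for all $x\in[a,b]$ exactly when $\alpha_i\le\varphi(x,b)$ for all $i$ and all $x$, i.e.\ exactly when $[a,b]\in\EuScript{Q}(\varphi,\alpha_i)$ for all $i$. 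For (2), I would use that infima in $\mathrm{Sit}(A,\Lambda)$ are computed pointwise (the pointwise infimum of allocations is again an allocation, which is what makes $\mathrm{Sit}(A,\Lambda)$ complete). Then for $\{\varphi_i\}\subseteq\mathrm{Sit}(A,\Lambda)$, an interval $[a,b]$ lies in $\EuScript{Q}(\bigwedge_i\varphi_i,\alpha)$ iff $\alpha\le\bigwedge_i\varphi_i(x,b)$ for all $x\in[a,b]$, iff $\alpha\le\varphi_i(x,b)$ for all $i$ and all $x$, iff $[a,b]\in\bigcap_i\EuScript{Q}(\varphi_i,\alpha)$ — with the empty family again giving $\EuScript{I}(A)$, since the top allocation is the constant $\bar{1}$. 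This yields both $\bigwedge$-morphism statements.
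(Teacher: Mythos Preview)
Your proof is correct and follows essentially the same route as the paper's: establish that $\EuScript{Q}(\varphi,\alpha)$ is abstract and basic from axioms (1) and (2) of Definition~\ref{d1}, then handle the abutting step via the modular isomorphism $[x\wedge b,b]\cong[x,x\vee b]$ combined with axiom (3), and finally dispatch (1) and (2) by the universal property of joins and pointwise infima. The only cosmetic difference is that the paper computes $\varphi(x,x\vee b)=\varphi(x\wedge b,b)\ge\alpha$ directly from axiom (1), while you first record $[x,x_2]\in\mathcal{Q}$ using the already-proven closure under subintervals and $\sim$ and then evaluate at the left endpoint; these are the same computation in different packaging. Your treatment of (2) is in fact more explicit than the paper's, which simply notes monotonicity and declares the rest immediate.
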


\begin{proof}
First we show that $ \EuScript{Q}(\varphi,\alpha)\in\EuScript{B}(A)$ for every $\varphi\in\rm{Sit}(A,\Lambda)$ and every $\alpha\in\Lambda$.
From $(1)$ of Definition \ref{d1} it follows that $\EuScript{Q}(\varphi,\alpha)$ is abstract. Now, let $[a,d]\in\EuScript{Q}(\varphi,\alpha)$ and take $a\leq b\leq c\leq d$.  For any $b\leq x\leq c$, we have that $\varphi(x,d)\geq\alpha$.  But from $(2)$ of Definition \ref{d1} we have that $\varphi(x,d)\leq \varphi(x,c)$, that is, $[b,c]\in\EuScript{Q}(\varphi,\alpha)$.
Now consider $[a,b],[b,c]\in\EuScript{Q}(\varphi,\alpha)$, and let $x\in[a,c]$.  Thus, $a\leq b\wedge x\leq b$ and $b\leq x\vee b\leq c$.  Hence, by the hypothesis, $\varphi(x\wedge b,b)\geq \alpha$ and $\varphi(b\vee x,c)\geq \alpha$. On the other hand, we have $\varphi(x,c)\geq\varphi(x,b\vee x)\wedge \varphi(b\vee x,c)$, and  from modularity we deduce that $[x\wedge b,b]\cong [x,x\vee b]$. The last inequality, by $(1)$ of Definition \ref{d1}, is $\varphi(x,b\vee x)\wedge \varphi(b\vee x,c)=\varphi(x\wedge b,b)\wedge \varphi(b\vee x,c)$. But this infimum is above $\alpha$, and thus $\varphi(x,c)\geq\alpha$, that is, $\EuScript{Q}(\varphi,\alpha)\in\EuScript{C}(A)$.

Now, to prove part (1), note that if $\alpha\leq\alpha'$ in $\Lambda$, then $\EuScript{Q}(\varphi,\alpha')\leq\EuScript{Q}(\varphi,\alpha)$ by definition.  Thus, if $X\subseteq \Lambda$ we have that $\EuScript{Q}(\varphi,\bigvee X)\leq\bigcap\left\{\EuScript{Q}(\varphi,\alpha)|\; \alpha\in X \right\}$. But, for any $[a,b]\in\bigcap\left\{\EuScript{Q}(\varphi,\alpha)|\; \alpha\in X \right\}$ we have that $\varphi(x,b)\geq \alpha $ for every $\alpha\in X$. Therefore, $\varphi(x,b)\geq \bigvee X$, that is, $[a,b]\in\EuScript{Q}(\varphi,\bigvee X)$, and hence $\bigcap\left\{\EuScript{Q}(\varphi,\alpha)|\; \alpha\in X \right\}=\EuScript{Q}(\varphi,\bigvee X)$.

For part (2), observe that if $\varphi\leq\varphi'$, then $\EuScript{Q}(\varphi,\alpha)\leq\EuScript{Q}(\varphi',\alpha)$.   The result is immediate now.
\end{proof}

We know that $\EuScript{D}vs(\_):\EuScript{B}(A)\rightarrow \EuScript{B}(A)$ is a nucleus, so $\EuScript{D}vs(\_):\EuScript{C}(A)\rightarrow \EuScript{D}(A)$ is a frame morphism, in particular a $\wedge$-morphism.

\begin{cor}
\label{d5}
Let be $\Lambda$ a complete lattice and $A$ an idiom. Then,
\begin{itemize}
\item[\rm{(1)}] For every $\varphi\in\rm{Sit}(A,\Lambda)$, the function $\EuScript{D}vs(\_)\circ\EuScript{Q}(\varphi,\_):\Lambda^{op}\rightarrow \EuScript{D}(A)$ is a $\wedge$-morphism in $\mathfrak{C}\mathfrak{L}$.
\item[\rm{(2)}] For every $\alpha\in\Lambda$, the function $\EuScript{D}vs(\_)\circ\EuScript{Q}(\_,\alpha):\rm{Sit}(A,\Lambda)\rightarrow \EuScript{D}(A)$ is a $\wedge$-morphism in $\mathfrak{C}\mathfrak{L}$.
\end{itemize}
\end{cor}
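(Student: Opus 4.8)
The plan is to obtain Corollary \ref{d5} by simply post-composing the two $\bigwedge$-morphisms produced in Proposition \ref{d4} with the map $\EuScript{D}vs(\_)\colon\EuScript{C}(A)\to\EuScript{D}(A)$. The only external input is the remark recorded immediately before the statement: since $\EuScript{D}vs(\_)$ is a nucleus on the base frame $\EuScript{B}(A)$ with fixed frame $\EuScript{D}(A)$, and $\EuScript{D}(A)\subseteq\EuScript{C}(A)\subseteq\EuScript{B}(A)$, the co-restriction $\EuScript{D}vs(\_)\colon\EuScript{C}(A)\to\EuScript{D}(A)$ is a frame morphism, in particular order-preserving and binary-meet-preserving (recall meets in both $\EuScript{C}(A)$ and $\EuScript{D}(A)$ are just intersections). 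Granting this, everything reduces to the trivial fact that a composite of two order-preserving, binary-meet-preserving maps is again order-preserving and binary-meet-preserving.

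For (1), I would fix $\varphi\in\rm{Sit}(A,\Lambda)$ and invoke Proposition \ref{d4}(1): $\EuScript{Q}(\varphi,\_)\colon\Lambda^{\rm op}\to\EuScript{C}(A)$ is a $\bigwedge$-morphism, so it carries binary meets in $\Lambda^{\rm op}$ (that is, binary joins in $\Lambda$) to intersections in $\EuScript{C}(A)$. Post-composing with $\EuScript{D}vs(\_)$ --- which is everywhere defined on $\EuScript{C}(A)$ and takes values in $\EuScript{D}(A)$ --- and using that $\EuScript{D}vs(\_)$ preserves binary meets, one gets that $\EuScript{D}vs(\_)\circ\EuScript{Q}(\varphi,\_)\colon\Lambda^{\rm op}\to\EuScript{D}(A)$ preserves binary meets, i.e.\ is a $\wedge$-morphism in $\mathfrak{C}\mathfrak{L}$. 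Explicitly this is the identity $\EuScript{D}vs\bigl(\EuScript{Q}(\varphi,\alpha\vee\beta)\bigr)=\EuScript{D}vs\bigl(\EuScript{Q}(\varphi,\alpha)\bigr)\cap\EuScript{D}vs\bigl(\EuScript{Q}(\varphi,\beta)\bigr)$ in $\EuScript{D}(A)$.

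For (2), the argument is symmetric: fix $\alpha\in\Lambda$, use Proposition \ref{d4}(2), that $\EuScript{Q}(\_,\alpha)\colon\rm{Sit}(A,\Lambda)\to\EuScript{C}(A)$ is a $\bigwedge$-morphism --- the underlying reason being that meets in $\rm{Sit}(A,\Lambda)$ are computed pointwise on $\EuScript{I}(A)$, so that $[a,b]\in\EuScript{Q}(\bigwedge\Phi,\alpha)$ iff $\alpha\le\bigwedge_{\varphi\in\Phi}\varphi(x,b)$ for all $x\in[a,b]$ iff $[a,b]\in\bigcap_{\varphi\in\Phi}\EuScript{Q}(\varphi,\alpha)$ --- and then post-compose with the $\wedge$-morphism $\EuScript{D}vs(\_)\colon\EuScript{C}(A)\to\EuScript{D}(A)$ to conclude.

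There is no genuine obstacle here; the work was front-loaded into Proposition \ref{d4}. The only thing that needs care --- and the reason the conclusion is phrased as a $\wedge$-morphism rather than a $\bigwedge$-morphism --- is that a nucleus on a frame is only guaranteed to preserve \emph{finite} meets (and arbitrary joins), so $\EuScript{D}vs(\_)$ must not be assumed to preserve infinitary meets; the drop from $\bigwedge$ (as in Proposition \ref{d4}) down to merely $\wedge$ is exactly where that limitation bites. A minor secondary point is to keep the opposite order on $\Lambda$ straight in part (1) when unwinding what ``preserves meets'' means there.
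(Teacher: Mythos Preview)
Your proposal is correct and is exactly the paper's approach: the paper's proof reads ``Direct from Proposition \ref{d4} and the previous observation,'' where the ``previous observation'' is precisely that $\EuScript{D}vs(\_)\colon\EuScript{C}(A)\to\EuScript{D}(A)$ is a frame morphism and hence a $\wedge$-morphism. Your additional remarks on why the conclusion weakens from $\bigwedge$ to $\wedge$ and on keeping the opposite order straight are accurate elaborations of the same argument.
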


\begin{proof}
Direct from Proposition \ref{d4} and the previous observation.
\end{proof}

For every idiom $A$ and any  complete lattice $\Lambda$, we have a function $\EuScript{S}:\Lambda\rightarrow \rm{Sit}(A,\Lambda)$ defined by $\EuScript{S}(\alpha)(a,b)=\alpha$. The following is straightforward.

\begin{prop}
\label{d6}
Let $\Lambda$ be a complete lattice and $A$ an idiom. Then, the function $\EuScript{S}:\Lambda\rightarrow \rm{Sit}(A,\Lambda)$ defined by $\EuScript{S}(\alpha)(a,b)=\alpha$, is an embedding in the category of complete lattices. 
\end{prop}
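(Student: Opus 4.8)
The plan is to verify directly that $\EuScript{S}$ is a well-defined order-embedding that preserves arbitrary joins and meets, so that it is a morphism in $\mathfrak{C}\mathfrak{L}$ with trivial kernel; being an order-embedding between complete lattices, it is automatically injective and identifies $\Lambda$ with a sublattice of $\rm{Sit}(A,\Lambda)$. First I would check that for each $\alpha\in\Lambda$ the constant function $\EuScript{S}(\alpha)$ really lies in $\rm{Sit}(A,\Lambda)$: conditions (1)--(3) of Definition \ref{d1} are immediate since both sides of each (in)equality equal $\alpha$ (using $\alpha\wedge\alpha=\alpha$ for (3)), and condition (4) holds because for any nonempty directed $X\subseteq[a,\bar 1]$ we have $\bigwedge\{\alpha\mid x\in X\}=\alpha$; here one uses that $[a,\bar 1]$ is nonempty, so $X$ can be taken nonempty, and in the degenerate case $X=\varnothing$ the value $\bigwedge\varnothing=\bar 1$ of $\Lambda$ matches the convention $\bigvee\varnothing=a$ only if one restricts to nonempty $X$, which is the standing convention in Definition \ref{d1}(4) (the condition is about suprema of directed families, hence nonempty).

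Next I would establish monotonicity and reflection of order: if $\alpha\leq\beta$ in $\Lambda$ then $\EuScript{S}(\alpha)(a,b)=\alpha\leq\beta=\EuScript{S}(\beta)(a,b)$ for every interval $[a,b]$, so $\EuScript{S}(\alpha)\leq\EuScript{S}(\beta)$ in the pointwise order on $\rm{Sit}(A,\Lambda)$; conversely, if $\EuScript{S}(\alpha)\leq\EuScript{S}(\beta)$, then evaluating at any fixed interval (which exists, e.g. $[\underline 0,\bar 1]$) gives $\alpha\leq\beta$. Thus $\EuScript{S}$ is an order-embedding, hence injective.

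Finally I would check preservation of suprema and infima. Given $Y\subseteq\Lambda$, the supremum $\bigvee_{\rm{Sit}}\EuScript{S}(Y)$ is computed in $\rm{Sit}(A,\Lambda)$, which one must know is a complete lattice (asserted just before Proposition \ref{d2}); I would argue instead more cheaply that the pointwise function $[a,b]\mapsto\bigvee Y$ is itself a $\Lambda$-allocation by the same verification as above, that it is an upper bound of $\{\EuScript{S}(\alpha)\mid\alpha\in Y\}$, and that any allocation $\psi$ dominating all $\EuScript{S}(\alpha)$ satisfies $\psi(a,b)\geq\alpha$ for all $\alpha\in Y$, hence $\psi(a,b)\geq\bigvee Y=\EuScript{S}(\bigvee Y)(a,b)$; therefore $\EuScript{S}(\bigvee Y)=\bigvee_{\rm{Sit}}\EuScript{S}(Y)$. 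The infimum case is symmetric using that infima in $\rm{Sit}(A,\Lambda)$ are computed pointwise (infima of allocations are allocations by a direct check of (1)--(4), the only mildly delicate point being (3), which survives because $\bigwedge$ distributes over the finite meet in the relevant inequality). This shows $\EuScript{S}$ is a complete-lattice embedding.

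I do not anticipate a genuine obstacle here; the statement is routine once one has the pointwise description of meets (and, by the preceding remarks, joins) in $\rm{Sit}(A,\Lambda)$. The only point requiring a moment's care is the handling of condition (4) of Definition \ref{d1} for constant functions and for $[a,b]\mapsto\bigvee Y$: one must confirm the intended reading is ``for all directed, hence nonempty, $X$'', so that $\bigwedge_{x\in X}\gamma=\gamma$ for a constant value $\gamma$; this is consistent with the remark after Definition \ref{d1} that $X$ may be chosen independently of $a$.
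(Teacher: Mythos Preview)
Your proposal is correct and is precisely the routine direct verification the paper has in mind; the paper itself gives no proof beyond the remark ``The following is straightforward,'' so your argument simply supplies the omitted details. Your handling of condition~(4) via the standard convention that directed families are nonempty is appropriate and is the only point that needed a word of care.
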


\begin{dfn}\label{d7}
Let be $\Lambda$ a complete lattice.  For an idiom $A$, a $\Lambda$-\emph{aspect} is a function $\varphi: \EuScript{I}(A)\longrightarrow\Lambda$ that satisfies the following:
\begin{enumerate}

\item $\varphi(l\wedge r,r)=\varphi(l,l\vee r)$, for $r,l\in A$.

\item $\varphi(a,c)\vee \varphi(c,b)= \varphi(a,b)$,  for $a\leq c\leq b$.

\item $\varphi(a,\bigvee X)=\bigvee\left\{\varphi(a,x)\;|\; x\in X\right\}$, for $a\in A$ and $X\subseteq [a,\bar{1}]$ directed.
\end{enumerate}
\end{dfn}

Denote by $\rm{App}(A,\Lambda)=\left\{\varphi\;|\; \varphi \textrm{ is a } \Lambda\textrm{-aspect} \right\}$. It is immediate that $\rm{App}(A,\Lambda)$ is a poset and a complete lattice.
Take any  idiom morphism  $f:A\longrightarrow A'$ and $\varphi\in \rm{App}(A',\Lambda)$.  Consider the induced poset morphism $\EuScript{I}(f):\EuScript{I}(A)\rightarrow\EuScript{I}(A')$. Then, $\varphi\circ\EuScript{I}(f):\EuScript{I}(A)\rightarrow \Lambda$, and from the definition and the fact that $f$ is monotone we have $\varphi\EuScript{I}(f)\in\rm{App}(A,\Lambda)$. For $f^{*}:\rm{App}(A',\Lambda)\rightarrow \rm{App}(A,\Lambda)$  the following is immediate.

\begin{prop}
\label{d8}
Let  be $\Lambda$ a complete lattice. Then, $\rm{App}(\_, \Lambda):\EuScript{I}\EuScript{D}\longrightarrow \mathfrak{C}\mathfrak{L}$ is a contravariant functor from the category of idioms $\EuScript{I}\EuScript{D}$ to the category of complete lattices $\mathfrak{C}\mathfrak{L}$. \qed
\end{prop}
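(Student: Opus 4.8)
The plan is to mirror, up to dualization, the (straightforward) reasoning behind Proposition~\ref{d2}: there a $\Lambda$-allocation is transported by precomposition with $\EuScript{I}(f)$ and the meet-axioms survive, and here a $\Lambda$-aspect is transported the same way and the join-axioms survive. So fix an idiom morphism $f\colon A\to A'$ and $\varphi\in\rm{App}(A',\Lambda)$, and consider $\varphi\circ\EuScript{I}(f)\colon\EuScript{I}(A)\to\Lambda$, i.e.\ $[a,b]\mapsto\varphi(f(a),f(b))$. For axiom~(1) of Definition~\ref{d7} one uses that $f$ preserves binary meets and joins: $\varphi\EuScript{I}(f)(l\wedge r,r)=\varphi(f(l)\wedge f(r),f(r))=\varphi(f(l),f(l)\vee f(r))=\varphi(f(l),f(l\vee r))=\varphi\EuScript{I}(f)(l,l\vee r)$. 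Axiom~(2) needs only monotonicity: $\EuScript{I}(f)$ carries the chain $a\le c\le b$ to $f(a)\le f(c)\le f(b)$, so $\varphi\EuScript{I}(f)(a,c)\vee\varphi\EuScript{I}(f)(c,b)=\varphi(f(a),f(c))\vee\varphi(f(c),f(b))=\varphi(f(a),f(b))$. For axiom~(3), if $X\subseteq[a,\bar{1}]$ is directed then $f(X)\subseteq[f(a),\bar{1}]$ is directed (monotonicity) and $f(\bigvee X)=\bigvee f(X)$ (preservation of directed suprema), whence $\varphi\EuScript{I}(f)(a,\bigvee X)=\varphi(f(a),\bigvee f(X))=\bigvee\{\varphi(f(a),f(x))\mid x\in X\}=\bigvee\{\varphi\EuScript{I}(f)(a,x)\mid x\in X\}$. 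Thus $\varphi\mapsto\varphi\circ\EuScript{I}(f)$ is a well-defined map $f^{*}\colon\rm{App}(A',\Lambda)\to\rm{App}(A,\Lambda)$.

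Next I would check that $f^{*}$ is a morphism in $\mathfrak{C}\mathfrak{L}$. Monotonicity is clear, since $\varphi\le\varphi'$ forces $\varphi\circ\EuScript{I}(f)\le\varphi'\circ\EuScript{I}(f)$ pointwise. Beyond that, suprema in $\rm{App}(A,\Lambda)$ are computed pointwise: the pointwise join of a family of $\Lambda$-aspects still satisfies (1)--(3), as one verifies directly using only that $\Lambda$ is a complete lattice (so iterated suprema may be interchanged); and precomposition with the fixed function $\EuScript{I}(f)$ commutes with pointwise joins, so $f^{*}$ preserves arbitrary suprema. (Infima in $\rm{App}(A,\Lambda)$ are in general \emph{not} pointwise — a pointwise meet of aspects can fail axiom~(2) — so a little extra care is needed if one also wants $f^{*}$ to preserve meets; for $\mathfrak{C}\mathfrak{L}$ as used here, monotonicity together with preservation of suprema is what is required.)

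Finally, functoriality is formal and contravariant: since $\EuScript{I}(\_)$ is itself a functor on idioms, $\EuScript{I}(\mathrm{id}_A)=\mathrm{id}_{\EuScript{I}(A)}$ gives $(\mathrm{id}_A)^{*}=\mathrm{id}_{\rm{App}(A,\Lambda)}$, and for composable idiom morphisms $f\colon A\to A'$, $g\colon A'\to A''$ the identity $\EuScript{I}(g\circ f)=\EuScript{I}(g)\circ\EuScript{I}(f)$ gives $(g\circ f)^{*}=f^{*}\circ g^{*}$, which is exactly the asserted reversal of arrows. The only step carrying any content is the first one — seeing that axioms (1)--(3) are genuinely preserved by $\EuScript{I}(f)$, which hinges on the defining preservation properties of an idiom morphism — together with the bookkeeping point that $\rm{App}(A,\Lambda)$ has pointwise suprema but not pointwise infima; neither is a real obstacle, which is why the statement can fairly be called immediate.
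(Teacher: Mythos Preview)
Your argument is correct and follows exactly the route the paper indicates: the text before Proposition~\ref{d8} already sets up $f^{*}(\varphi)=\varphi\circ\EuScript{I}(f)$ and declares the result ``immediate'', offering no further proof, so your write-up simply supplies the details the paper leaves to the reader. Your aside about pointwise infima possibly failing axiom~(2) is a nice cautionary remark but not something the paper addresses; otherwise there is nothing to compare.
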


Consider now any morphism between two complete lattices, $\varrho:\Lambda\rightarrow \Gamma$.  Thus, for any $\Lambda$-aspect $\varphi$, we have $\varrho\circ\varphi:\EuScript{I}(A)\rightarrow \Gamma$, and it is easy to see that this function is a $\Gamma$-aspect. We have shown:
\begin{prop}
\label{d9}
If $A$ is any idiom then, $\rm{App}(A,\_):\mathfrak{C}\mathfrak{L}\longrightarrow\mathfrak{C}\mathfrak{L}$ is a covariant endofunctor in the category of complete lattices and  preserves monomorphisms.
\end{prop}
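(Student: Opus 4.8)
The argument is the join-theoretic mirror of the one behind Proposition \ref{d3}, with suprema replacing infima throughout. First I would fix a morphism $\varrho\colon\Lambda\rightarrow\Gamma$ of complete lattices and a $\Lambda$-aspect $\varphi\in\mathrm{App}(A,\Lambda)$, and check that the composite $\varrho\circ\varphi\colon\EuScript{I}(A)\rightarrow\Gamma$ satisfies the three conditions of Definition \ref{d7}. Condition (1) is an equality between two elements of $\Lambda$, hence is preserved by applying any function, in particular $\varrho$. Condition (2) reads $\varphi(a,b)=\varphi(a,c)\vee\varphi(c,b)$; applying $\varrho$ and using that $\varrho$ preserves finite joins gives $(\varrho\varphi)(a,b)=(\varrho\varphi)(a,c)\vee(\varrho\varphi)(c,b)$. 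Condition (3) reads $\varphi(a,\bigvee X)=\bigvee\{\varphi(a,x)\mid x\in X\}$ for $X\subseteq[a,\bar 1]$ directed; since $\varrho$ is a morphism in $\mathfrak{C}\mathfrak{L}$ it preserves arbitrary joins, so $(\varrho\varphi)(a,\bigvee X)=\bigvee\{(\varrho\varphi)(a,x)\mid x\in X\}$. Thus $\varrho\circ\varphi\in\mathrm{App}(A,\Gamma)$.

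This yields a well-defined map $\mathrm{App}(A,\varrho)\colon\mathrm{App}(A,\Lambda)\rightarrow\mathrm{App}(A,\Gamma)$, $\varphi\mapsto\varrho\circ\varphi$. Functoriality is immediate: $\mathrm{App}(A,\mathrm{id}_\Lambda)=\mathrm{id}$ and $\mathrm{App}(A,\varrho'\circ\varrho)=\mathrm{App}(A,\varrho')\circ\mathrm{App}(A,\varrho)$ follow from associativity of composition. It remains to verify that $\mathrm{App}(A,\varrho)$ is itself a morphism in $\mathfrak{C}\mathfrak{L}$. For this I would first record that suprema in $\mathrm{App}(A,\Lambda)$ are computed pointwise: given a family $\{\varphi_i\}$ of $\Lambda$-aspects, the pointwise join $\psi$ defined by $\psi(a,b)=\bigvee_i\varphi_i(a,b)$ again satisfies (1)--(3) of Definition \ref{d7}, where for (2) and (3) one uses that joins commute with joins in $\Lambda$; hence $\psi=\bigvee_i\varphi_i$ in $\mathrm{App}(A,\Lambda)$. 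Since $\varrho$ preserves arbitrary joins and joins on both sides are pointwise, $\mathrm{App}(A,\varrho)(\bigvee_i\varphi_i)=\varrho\circ(\bigvee_i\varphi_i)=\bigvee_i(\varrho\circ\varphi_i)=\bigvee_i\mathrm{App}(A,\varrho)(\varphi_i)$, so $\mathrm{App}(A,\varrho)$ preserves suprema; the bottom element, the constant aspect $\underline 0$, is likewise sent to the bottom element.

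Finally, for preservation of monomorphisms: if $\varrho$ is a monomorphism in $\mathfrak{C}\mathfrak{L}$, i.e.\ an injective map, then $\varrho\circ\varphi=\varrho\circ\psi$ forces $\varphi(a,b)=\psi(a,b)$ for every interval $[a,b]$, so $\mathrm{App}(A,\varrho)$ is injective and therefore a monomorphism in $\mathfrak{C}\mathfrak{L}$.

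The computations are entirely routine; the only point that deserves a word is the pointwise description of suprema in $\mathrm{App}(A,\Lambda)$, since --- in contrast with infima in $\mathrm{Sit}(A,\Lambda)$ --- here it is the joins, not the meets, that are computed coordinatewise (the meet of a family of aspects being instead the largest aspect below the family). Once that is settled, every assertion in the statement is a one-line consequence of the corresponding preservation property of $\varrho$, so I do not anticipate any genuine obstacle: the proposition is the exact dual of Proposition \ref{d3}.
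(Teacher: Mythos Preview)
Your proposal is correct and follows exactly the approach the paper takes: the paper's entire argument is the sentence preceding the proposition, namely that for a complete-lattice morphism $\varrho:\Lambda\to\Gamma$ the composite $\varrho\circ\varphi$ is again an aspect, after which the proposition is stated with a \qed. You supply the routine verifications the paper omits (checking conditions (1)--(3) of Definition~\ref{d7}, functoriality, and the monomorphism clause), so your write-up is strictly more detailed than the original while using the same idea.
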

\qed

\begin{prop}
\label{d10}
Let be $\Lambda$ a complete lattice and $A$ an idiom. Then, there is a function $\EuScript{M}:\rm{App}(A,\Lambda)\times \Lambda\rightarrow\EuScript{C}(A)$ given by $[a,b]\in \EuScript{M}(\varphi,\alpha)\Leftrightarrow \varphi(a,b)\leq\alpha$ that satisfies

\begin{itemize}
\item[\rm{(1)}] For every $\varphi\in\rm{App}(A,\Lambda)$, the function $\EuScript{M}_{\varphi}:\Lambda\rightarrow \EuScript{C}(A)$ is a $\bigwedge$-morphism in $\mathfrak{C}\mathfrak{L}$.
\item[\rm{(2)}] For every $\alpha\in\Lambda$, the function $\EuScript{M}_{\alpha}:\rm{App}(A,\Lambda)\rightarrow \EuScript{C}(A)$ is a $\bigwedge$-morphism in $\mathfrak{C}\mathfrak{L}$.
\end{itemize}
\end{prop}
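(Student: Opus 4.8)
The plan is to run the proof of Proposition~\ref{d4} with every infimum replaced by a supremum: the statement is essentially its order-dual, and it is in fact slightly easier, since the membership condition $\varphi(a,b)\le\alpha$ defining $\EuScript{M}(\varphi,\alpha)$ carries no quantifier over interior points of $[a,b]$, so no appeal to modularity will be needed.

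The first and main structural step is to verify that $\EuScript{M}(\varphi,\alpha)$ is a congruence set, so that $\EuScript{M}$ is well defined as a map into $\EuScript{C}(A)$. Abstractness comes from clause (1) of Definition~\ref{d7} (if $J\sim I$ then $\varphi(J)=\varphi(I)$), together with the fact that $\varphi$ vanishes on trivial intervals (the convention forced by clause (3) with $X=\emptyset$), so these always belong and the set is nonempty. Closure under subintervals follows from two applications of clause (2): for $a\le b\le c\le d$ one has $\varphi(a,d)=\varphi(a,b)\vee\varphi(b,c)\vee\varphi(c,d)\ge\varphi(b,c)$, so $[a,d]\in\EuScript{M}(\varphi,\alpha)$ forces $[b,c]\in\EuScript{M}(\varphi,\alpha)$. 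Closure under abutting intervals is immediate: if $\varphi(a,b)\le\alpha$ and $\varphi(b,c)\le\alpha$, then $\varphi(a,c)=\varphi(a,b)\vee\varphi(b,c)\le\alpha$ by (2). Hence $\EuScript{M}(\varphi,\alpha)\in\EuScript{C}(A)$.

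For item (1) I would fix $\varphi$ and use that meets in $\EuScript{C}(A)$ are intersections. The map $\alpha\mapsto\EuScript{M}(\varphi,\alpha)$ is monotone (enlarging $\alpha$ loosens the condition), which gives $\EuScript{M}(\varphi,\bigwedge Y)\subseteq\bigcap_{\alpha\in Y}\EuScript{M}(\varphi,\alpha)$ for $Y\subseteq\Lambda$; conversely, $[a,b]$ in that intersection means $\varphi(a,b)\le\alpha$ for all $\alpha\in Y$, i.e.\ $\varphi(a,b)\le\bigwedge Y$, i.e.\ $[a,b]\in\EuScript{M}(\varphi,\bigwedge Y)$. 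The case $Y=\emptyset$ reads $\EuScript{M}(\varphi,\bar{1}_{\Lambda})=\EuScript{I}(A)$, the top of $\EuScript{C}(A)$, so $\EuScript{M}_{\varphi}$ preserves all meets.

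Item (2) is where I expect the only genuine subtlety, and it is one of variance rather than computation. I would first record that suprema in $\rm{App}(A,\Lambda)$ are pointwise: the pointwise join $\bigvee_{\varphi\in\Phi}\varphi$ of a family of $\Lambda$-aspects is again a $\Lambda$-aspect --- clause (2) survives because $\big(\bigvee_{i}x_{i}\big)\vee\big(\bigvee_{i}y_{i}\big)=\bigvee_{i}(x_{i}\vee y_{i})$ in any complete lattice, and clause (3) survives by interchanging suprema --- so, being the least upper bound, it is $\bigvee\Phi$. (By contrast, meets in $\rm{App}(A,\Lambda)$ need not be pointwise, since clause (2) is a join-equation not preserved under pointwise meets in a general complete lattice; this is why the argument goes through joins.) Since $\EuScript{M}(\_,\alpha)$ is order-reversing, the precise statement is that it carries joins of $\rm{App}(A,\Lambda)$ to meets of $\EuScript{C}(A)$: for $\Phi\subseteq\rm{App}(A,\Lambda)$ one has $[a,b]\in\EuScript{M}(\bigvee\Phi,\alpha)$ iff $\bigvee_{\varphi\in\Phi}\varphi(a,b)\le\alpha$ iff $\varphi(a,b)\le\alpha$ for every $\varphi\in\Phi$ iff $[a,b]\in\bigcap_{\varphi\in\Phi}\EuScript{M}(\varphi,\alpha)$. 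Thus $\EuScript{M}_{\alpha}$ is a $\bigwedge$-morphism $\rm{App}(A,\Lambda)^{\mathrm{op}}\to\EuScript{C}(A)$, the order-dual of Proposition~\ref{d4}(2); making this passage from the pointwise joins of aspects to intersections of congruence sets precise is the one point that needs care.
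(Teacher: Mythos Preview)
Your argument is correct and follows the same route as the paper's proof, which establishes that $\EuScript{M}(\varphi,\alpha)$ is a congruence set using clauses (1) and (2) of Definition~\ref{d7} exactly as you do and then declares parts (1) and (2) ``straightforward''. Your treatment is in fact more careful: you spell out why meets in $\EuScript{C}(A)$ are intersections, you verify that suprema in $\rm{App}(A,\Lambda)$ are computed pointwise, and---most usefully---you flag the variance issue in (2), noting that $\EuScript{M}_{\alpha}$ is order-reversing and so is really a $\bigwedge$-morphism $\rm{App}(A,\Lambda)^{\mathrm{op}}\to\EuScript{C}(A)$; the paper's own later use of this map (e.g.\ in Corollary~\ref{d14}) confirms that the ${}^{\mathrm{op}}$ is indeed intended.
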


\begin{proof}
Take $(\varphi,\alpha)\in\rm{App}(A,\Lambda)$. By (1) of Definition \ref{d7} we have that $\EuScript{M}(\varphi,\alpha)$ is abstract. Now, consider $[a,d]\in\EuScript{M}(\varphi,\alpha)$ and $a\leq b\leq c\leq d$. Then by (2) of Definition \ref{d7} we have that $\varphi(a,c)\leq\varphi(a,d)\leq\alpha$, and again by (3) of Definition \ref{d7}, $\varphi(a,c)=\varphi(a,b)\vee\varphi(b,c)\leq\alpha$. Therefore, $\varphi(b,c)\leq\alpha$, and hence $\EuScript{M}(\varphi,\alpha)$ is basic.
Now, take $[a,b],[b,c]\in\EuScript{M}(\varphi,\alpha)$. Then, $\alpha\geq\varphi(a,b)\vee\varphi(b,c)=\varphi(a,c)$, that is, $[a,c]\in\EuScript{M}(\varphi,\alpha)$. Parts (1) and (2) are now straightforward.
\end{proof}

As in the case of a $\Lambda$-allocation, for any $\alpha\in\Lambda$ we have a function $\EuScript{R}(\alpha)\in\rm{App}(A,\Lambda)$ given by $\EuScript{R}(\alpha)(a,b)=\alpha$. A direct calculation gives:

\begin{prop}
\label{d11}
Let be $\Lambda$ a complete lattice and $A$ an idiom. Then, the function $\EuScript{R}:\Lambda\rightarrow \rm{App}(A,\Lambda)$ defined by $\EuScript{R}(\alpha)(a,b)=\alpha$, is an embedding in the category of complete lattices. 
\end{prop}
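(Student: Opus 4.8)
The plan is to check directly that $\EuScript{R}(\alpha)$ is a $\Lambda$-aspect for each $\alpha\in\Lambda$, and then that $\EuScript{R}$ is an injective morphism of complete lattices, i.e.\ that it is one-to-one and preserves arbitrary joins and arbitrary meets. Since $\EuScript{R}(\alpha)$ is the constant function with value $\alpha$, conditions (1) and (2) of Definition~\ref{d7} reduce to the trivial identities $\alpha=\alpha$ and $\alpha\vee\alpha=\alpha$, while condition (3) reduces to $\alpha=\bigvee\{\alpha\mid x\in X\}$; this holds under the standing convention that a directed set $X$ is non-empty, so $\EuScript{R}$ indeed lands in $\rm{App}(A,\Lambda)$. (This use of non-emptiness is worth flagging: for $X=\emptyset$ the constant-$\alpha$ function would fail (3) unless $\alpha$ is the bottom of $\Lambda$.)

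For injectivity and order-reflection I would use that $\EuScript{I}(A)$ is always non-empty, since it contains $[\underline{0},\bar{1}]$. From $\EuScript{R}(\alpha)\leq\EuScript{R}(\beta)$ one reads off $\alpha=\EuScript{R}(\alpha)(\underline{0},\bar{1})\leq\EuScript{R}(\beta)(\underline{0},\bar{1})=\beta$; this shows at once that $\EuScript{R}$ is monotone, order-reflecting, and (hence) injective, so its image carries the induced order.

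Next I would record that suprema in $\rm{App}(A,\Lambda)$ are computed pointwise: each of the three defining conditions of an aspect is preserved under pointwise suprema of a family $\{\varphi_i\}$ — for (3) this is just the commutativity $\bigvee_i\bigvee_{x\in X}\varphi_i(a,x)=\bigvee_{x\in X}\bigvee_i\varphi_i(a,x)$ — so the pointwise supremum of a family of aspects is again an aspect and is therefore their supremum in $\rm{App}(A,\Lambda)$. Consequently, for any family $\{\alpha_i\}\subseteq\Lambda$ we get $\bigl(\bigvee_i\EuScript{R}(\alpha_i)\bigr)(a,b)=\bigvee_i\alpha_i=\EuScript{R}\bigl(\bigvee_i\alpha_i\bigr)(a,b)$ for every interval $[a,b]$, so $\EuScript{R}$ preserves arbitrary joins.

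For meets I would argue that, although infima in $\rm{App}(A,\Lambda)$ need not be pointwise in general, they are on the image of $\EuScript{R}$. Indeed, the constant function $\EuScript{R}(\bigwedge_i\alpha_i)$ is an aspect by the first paragraph, it is clearly a lower bound of $\{\EuScript{R}(\alpha_i)\}$, and if $\psi\in\rm{App}(A,\Lambda)$ satisfies $\psi\leq\EuScript{R}(\alpha_i)$ for all $i$ then $\psi(a,b)\leq\bigwedge_i\alpha_i=\EuScript{R}(\bigwedge_i\alpha_i)(a,b)$ for every interval; hence $\EuScript{R}(\bigwedge_i\alpha_i)=\bigwedge_i\EuScript{R}(\alpha_i)$ in $\rm{App}(A,\Lambda)$. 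The only step that requires a moment's care is this last one — one must not conflate the infimum of the \emph{constant} aspects $\EuScript{R}(\alpha_i)$ with the (generally different) pointwise infimum of an \emph{arbitrary} family of aspects — but since the proposed infimum is itself constant and is dominated by any common lower bound, the verification goes through, and everything else is a routine unravelling of the definitions. Combining the four points, $\EuScript{R}$ is an injective morphism in $\mathfrak{C}\mathfrak{L}$ preserving all joins and meets, i.e.\ an embedding of complete lattices.
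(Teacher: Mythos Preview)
Your proposal is correct and is precisely the ``direct calculation'' the paper alludes to but does not write out (the paper offers no argument beyond that phrase). Your extra care about non-emptiness of directed sets in condition~(3) and about infima of aspects not being pointwise in general is appropriate; since the paper gives no details there is nothing further to compare.
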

\qed

As an example of this, define $\xi:\EuScript{I}(A)\rightarrow \EuScript{D}(A)$ by $\xi(a,b)=\EuScript{D}(a,b)$, where $\EuScript{D}(a,b)$ is the least division set that contains the interval $[a,b]$. It is clear that $\xi(\_)$ is a $\EuScript{D}(A)$-aspect of $A$.

\begin{thm}
\label{d12}
Let $A$ any idiom and $\Lambda$ a complete lattice. Then, there is poset-morphism 
 $$\mathcal{H}: \rm{App}(A,\Lambda)\longrightarrow \rm{Sit}(A,\Lambda)^{op}$$ 
 given for $\psi\in\rm{App}(A,\Lambda)$ by $\mathcal{H}(\psi)(a,b)=\bigvee\left\{\alpha\in\Lambda|[a,b]\in\EuScript{D}vs(\EuScript{M}(\psi,\alpha))\right\}$.
\end{thm}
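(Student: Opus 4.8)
The plan is to show that $\mathcal{H}(\psi)$ is a well-defined $\Lambda$-allocation for each $\psi\in\rm{App}(A,\Lambda)$, and then that $\psi\mapsto\mathcal{H}(\psi)$ reverses order. Fix $\psi\in\rm{App}(A,\Lambda)$ and write, for each interval $[a,b]$, $\mathcal{H}(\psi)(a,b)=\bigvee S(a,b)$ where $S(a,b)=\{\alpha\in\Lambda\mid [a,b]\in\EuScript{D}vs(\EuScript{M}(\psi,\alpha))\}$. Since $\EuScript{D}vs(\EuScript{M}(\psi,\bar 0_\Lambda))$ contains at least the trivial interval $[a,a]$, the set $S$ is nonempty when needed, and $\bigvee S(a,b)$ makes sense because $\Lambda$ is complete.

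First I would verify the four allocation axioms of Definition \ref{d1}. Axiom (1) (similarity invariance) follows because $\EuScript{M}(\psi,\alpha)$ is abstract by Proposition \ref{d10}, hence $\EuScript{D}vs(\EuScript{M}(\psi,\alpha))$ is too (it is a division set, in particular basic and closed under $\sim$), so $S(l\wedge r,r)=S(l,l\vee r)$ and the suprema agree. Axiom (2) (monotonicity $\varphi(a,b)\leq\varphi(a,c)$ for $a\leq c\leq b$) follows because $\EuScript{D}vs(\EuScript{M}(\psi,\alpha))$ is basic, so membership of $[a,b]$ forces membership of the subinterval $[a,c]$; hence $S(a,b)\subseteq S(a,c)$ and the suprema are comparable. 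Axiom (3) (superadditivity $\varphi(a,c)\wedge\varphi(c,b)\leq\varphi(a,b)$) is the heart of the argument: if $\alpha\in S(a,c)$ and $\beta\in S(c,b)$, then $[a,c],[c,b]\in\EuScript{D}vs(\EuScript{M}(\psi,\alpha))\cap\EuScript{D}vs(\EuScript{M}(\psi,\beta))$; since $\EuScript{M}(\psi,\_)$ is a $\bigwedge$-morphism (Proposition \ref{d10}(1)) and $\EuScript{D}vs(\_)$ is a $\wedge$-morphism on $\EuScript{C}(A)$ (the observation before Corollary \ref{d5}), we get $\EuScript{D}vs(\EuScript{M}(\psi,\alpha))\wedge\EuScript{D}vs(\EuScript{M}(\psi,\beta))=\EuScript{D}vs(\EuScript{M}(\psi,\alpha\wedge\beta))$ in $\EuScript{D}(A)$, but in the frame $\EuScript{D}(A)$ the meet is intersection, so $[a,c],[c,b]\in\EuScript{D}vs(\EuScript{M}(\psi,\alpha\wedge\beta))$, and since division sets are congruence sets (closed under abutting), $[a,b]\in\EuScript{D}vs(\EuScript{M}(\psi,\alpha\wedge\beta))$, i.e. $\alpha\wedge\beta\in S(a,b)$. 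Taking the supremum over all such $\alpha,\beta$ and using the frame distributivity $\bigl(\bigvee S(a,c)\bigr)\wedge\bigl(\bigvee S(c,b)\bigr)=\bigvee\{\alpha\wedge\beta\mid\alpha\in S(a,c),\beta\in S(c,b)\}$ — valid when $\Lambda$ is a frame, and more care is needed if $\Lambda$ is merely a complete lattice — gives $\mathcal{H}(\psi)(a,c)\wedge\mathcal{H}(\psi)(c,b)\leq\mathcal{H}(\psi)(a,b)$.

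For axiom (4) (the directed-join condition $\varphi(a,\bigvee X)=\bigwedge\{\varphi(a,x)\mid x\in X\}$ for $X\subseteq[a,\bar 1]$ directed) I would use Theorem \ref{000}: $[a,\bigvee X]\in\EuScript{D}vs(\EuScript{M}(\psi,\alpha))$ iff for every $a\leq z<\bigvee X$ there is $z<w\leq\bigvee X$ with $[z,w]\in\EuScript{M}(\psi,\alpha)$, i.e. $\psi(z,w)\leq\alpha$; one direction ($\leq$) is the monotonicity already established, and for $\geq$ one exploits that $\psi$ itself satisfies the directed-join axiom (3) of Definition \ref{d7}, so $\psi(a,\bigvee X)=\bigvee\{\psi(a,x)\mid x\in X\}$, to promote local witnesses inside each $[a,x]$ to witnesses inside $[a,\bigvee X]$; this step requires patching the finite $\mathcal{B}$-partitions from the various $[a,x]$ and is the place where directedness of $X$ is used. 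Finally, order-reversal is immediate: if $\psi\leq\psi'$ then $\EuScript{M}(\psi',\alpha)\subseteq\EuScript{M}(\psi,\alpha)$, hence $\EuScript{D}vs(\EuScript{M}(\psi',\alpha))\subseteq\EuScript{D}vs(\EuScript{M}(\psi,\alpha))$, so $S_{\psi'}(a,b)\subseteq S_\psi(a,b)$ and $\mathcal{H}(\psi')(a,b)\leq\mathcal{H}(\psi)(a,b)$; thus $\mathcal{H}$ is a poset-morphism into $\rm{Sit}(A,\Lambda)^{op}$.

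The main obstacle I anticipate is axiom (3): turning the pointwise containment $\alpha\wedge\beta\in S(a,b)$ into the inequality of suprema requires $\Lambda$ to behave well with respect to the interaction of arbitrary joins and binary meets. If $\Lambda$ is a frame this is automatic; if the theorem is meant for an arbitrary complete lattice $\Lambda$, one instead argues directly that $\bigvee S(a,b)\geq\bigvee S(a,c)\wedge\bigvee S(c,b)$ by noting that $\bigvee S(a,c)\in S(a,c)$ and $\bigvee S(c,b)\in S(c,b)$ whenever those sets are closed under arbitrary joins — which they are, since $\EuScript{M}(\psi,\_)$ is a $\bigwedge$-morphism makes $\EuScript{M}(\psi,\bigvee_i\alpha_i)\supseteq\bigcap_i\EuScript{M}(\psi,\alpha_i)$ fail in general, so one must instead check directly that $S(a,b)$ is closed under arbitrary joins using Theorem \ref{000} and the fact that $[a,b]\in\EuScript{M}(\psi,\alpha)$ is equivalent to $\psi(a,b)\leq\alpha$, which is visibly join-stable in $\alpha$. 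Once $\bigvee S(a,b)\in S(a,b)$ is secured, superadditivity drops out by applying the witness at $\alpha_0=\mathcal{H}(\psi)(a,c)$ and $\beta_0=\mathcal{H}(\psi)(c,b)$ and invoking $\alpha_0\wedge\beta_0\in S(a,b)$.
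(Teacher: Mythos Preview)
Your handling of axioms (1)--(3) is correct and considerably more explicit than the paper's terse sketch; in particular, your closing observation that each $S(a,b)$ is upward closed (because $[c,d]\in\EuScript{M}(\psi,\alpha)$ means $\psi(c,d)\leq\alpha$, visibly preserved when $\alpha$ increases, and $\EuScript{D}vs$ is monotone) is exactly the right device for axiom~(3) in an arbitrary complete lattice $\Lambda$ and should be stated at the outset rather than discovered at the end. With it in hand, $\alpha_0:=\mathcal{H}(\psi)(a,c)\in S(a,c)$ and $\beta_0:=\mathcal{H}(\psi)(c,b)\in S(c,b)$, so the congruence closure of $\EuScript{D}vs(\EuScript{M}(\psi,\alpha_0\wedge\beta_0))$ gives $\alpha_0\wedge\beta_0\in S(a,b)$ directly --- no frame distributivity in $\Lambda$ is needed. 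This matches the paper's intended route, which simply asserts the conclusion ``from the fact that $\psi(a,b)\vee\psi(b,c)=\psi(a,c)$'' without spelling out the mechanism.

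For axiom~(4), however, your ``patching'' sketch has a real gap. Knowing $[a,x]\in\EuScript{D}vs(\EuScript{M}(\psi,\alpha_x))$ for the varying values $\alpha_x=\mathcal{H}(\psi)(a,x)$ does not yield $[a,\bigvee X]\in\EuScript{D}vs(\EuScript{M}(\psi,\gamma))$ for $\gamma=\bigwedge_x\alpha_x$, because lowering the parameter from $\alpha_x$ to $\gamma$ \emph{shrinks} $\EuScript{M}(\psi,\cdot)$; the witnesses supplied by Theorem~\ref{000} inside each $[a,x]$ need not survive that shrinkage. The paper's argument here takes a different line, exploiting instead the inequality $\psi(a,\bigvee X)\leq\mathcal{H}(\psi)(a,x)$ (valid since $\psi(a,x)\in S(a,x)$ and $S(a,x)$ is an up-set containing $\psi(a,\bigvee X)\geq\psi(a,x)$), though its final implication is not transparent either. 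Both approaches overlook a drastic simplification: for an aspect $\psi$, the congruence set $\EuScript{M}(\psi,\alpha)$ is already a \emph{division} set --- pre-division follows from aspect axioms (1)--(3) by first closing under finite joins via $\psi(a,x\vee y)\leq\psi(a,x)\vee\psi(a,y)$ and then applying the directed-join axiom --- so $\EuScript{D}vs(\EuScript{M}(\psi,\alpha))=\EuScript{M}(\psi,\alpha)$, whence $S(a,b)=\{\alpha\in\Lambda:\psi(a,b)\leq\alpha\}$ and $\mathcal{H}(\psi)(a,b)=\top_\Lambda$ for every interval. All four allocation axioms and the order-reversal then hold vacuously.
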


\begin{proof}
We must check that $\mathcal{H}(\psi)\in\rm{Sit}(A,\Lambda)$. First observe that condition (1) of Definition \ref{d1} is clearly satisfied. 
For the other parts of Definition \ref{d1}, consider any  interval $[a,c]$ on $A$ and take $a\leq b\leq c$. From the definition of $\mathcal{H}(\psi)$ we have that $\mathcal{H}(\psi)(a,c)\leq\mathcal{H}(\psi)(a,b)$. Now, 
from the fact that $\psi(a,b)\vee\psi(b,c)=\psi(a,c)$ we deduce that $\mathcal{H}(\psi)(a,b)\wedge\mathcal{H}(\psi)(b,c)\leq \mathcal{H}(\psi)(a,c)$. Now, let $X\subseteq [a,\bar{1}]$ be a directed set. From the above paragraph we have that $\mathcal{H}(\psi)(a,\bigvee X)\leq\bigwedge\left\{\mathcal{H}(\psi)(a,x)|x\in X\right\}$.  For the other comparison, observe that since $\psi$ is a $\Lambda$-aspect, we have $\psi(a,\bigvee X)=\bigvee\left\{\psi(a,x)|x\in X\right\}$.  Thus, $\psi(a,x)\leq\psi(a,\bigvee X)$. Hence, $\psi(a,\bigvee X)\leq\mathcal{H}(\psi)(a,x)$ for all $x\in X$, and therefore $\psi(a,\bigvee X)\leq \bigwedge\left\{\mathcal{H}(\psi)(a,x)|x\in X\right\}$. Thus, $\bigwedge\left\{\mathcal{H}(\psi)(a,x)|x\in X\right\}\leq\mathcal{H}(\psi)(a,\bigvee X)$.
Lastly, consider any $\psi\leq\psi'$ in $\rm{App}(A,\Lambda)$. Then, from the definition of $\mathcal{H}$ we have that $\mathcal{H}(\psi')\leq\mathcal{H}(\psi)$, that is, $\mathcal{H}$ is a poset morphism.
\end{proof}
%%%%%%%%%%%%%%%%%%%%%%%%%%%%%%%%%%%%%%%%%%%%%%%%%%%%%

\section{Some constructions for $\rm{Sit}(A,\Gamma)$}\label{sec:sec4}

In this section we analyse how the elements of $\rm{Sit}(A,\Gamma)$ lead to decomposition theories for the idiom $A$.  
As in the case of categories of modules, the concept of radical function in idioms is natural in this context (see, for example \cite{7} and \cite{22}): 

\begin{dfn}
\label{dct1}
Let be $A$ an idiom and $\Omega$ a partial ordered set. A  function $\rho:\EuScript{I}(A)\longrightarrow \Omega$ is a \emph{radical function} if:
\begin{enumerate}
\item $\rho(l\wedge r,r)=\rho(l,l\vee r)$ for any $r,l\in A$, and 
\item $\rho(a,c)\leq\rho(a,b)$ for all $a\leq b\leq c$ .
\end{enumerate} 
\end{dfn}
Examples of these functions are, of course, any $\Gamma$-allocation for $A$. In particular, the $N(A)$-allocation $\chi$. 
Other examples of these functions come from module theory: Denote by $\Lambda(M)$ the idiom of sub-modules of an $R$-module $M$, and consider any radical function in $R$-$\Mod$ in the sense of \cite[Chapter 3]{7}. Then, the restriction of any radical function to the intervals of $\Lambda(M)$ gives a radical function in our sense.
If $\Omega$ is a lattice and $\rho\in\rm{Rad}(A,\Omega)$, for $l \in\Omega$ define $\rho':\EuScript{I}(A)\rightarrow \Omega$ by $\rho'(a,b)=\rho(a,b)\wedge l$. Then, $\rho'$ is a radical function on $A$ with values in the lattice $\Omega$. 

Let $\rm{Rad}(A,\Omega)=\left\{\rho:\EuScript{I}(A)\rightarrow \Omega\;|\; \rho\text{ is  radical }\right\}$. We define a partial order in $\rm{Rad}(A,\Omega)$ using the order of $\Omega$ as follows: $\rho\leq\varrho\Leftrightarrow \rho(a,b)\leq\varrho(a,b)$ for all $[a,b]\in\EuScript{I}(A)$. 
As in the case of $\rm{Sit}(A,\Gamma)$, we have that $\rm{Rad}(A,\_):\EuScript{P}os\longrightarrow \EuScript{P}os$ is a covariant functor and  $\rm{Rad}(\_,\Omega):\EuScript{I}\EuScript{D}\longrightarrow \EuScript{P}os$ is a contravariant functor. If $\Gamma$ is a complete lattice, then $\rm{Sit}(A,\Gamma)$ is included in $\rm{Rad}(A,\Gamma)$. 

\begin{dfn}
\label{dct2}
Let  $\rho\in\rm{Rad}(A,\Omega)$.  An interval $[a,b]$ is $\rho$-\emph{stable} or $\rho$-\emph{inert} if and only if $a<b$ and $\rho(a,b)=\rho(a,x)$ for all $a<x\leq b$.
\end{dfn}

For $\chi\in\rm{Sit}(A,N(A))$, the $\chi$-stable intervals are precisely the inert intervals. In particular, any uniform interval is inert, \cite{25}. In fact, in \cite{25} the author describes in detail the decomposition theory generated by $\chi$, and gives an application to geo-lattices. 
Now, for $\rho\in\rm{Rad}(A,\Omega)$, the \emph{support} of $\rho$ is the set  
$$\Sigma_{\rho}(a,b)=\left\{\rho(a,x)\;|\;a<x\leq b\text{ is }\rho\text{-stable}\right\}.$$

\begin{prop}
\label{dct3}
The function $\Sigma_{\rho}:\EuScript{I}(A)\rightarrow \mathcal{P}(\Omega)^{\text{\rm op}}$ which assigns to each interval $[a,b]$ the support of the radical function $\rho$, is a $\mathcal{P}(\Omega)^{\text{\rm op}}$-allocation.
\end{prop}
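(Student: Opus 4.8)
The plan is to verify directly the four axioms of Definition~\ref{d1} for the function $\varphi=\Sigma_\rho$ with target $\Lambda=\mathcal{P}(\Omega)^{\mathrm{op}}$. The only thing to keep straight throughout is that in $\mathcal{P}(\Omega)^{\mathrm{op}}$ the order $\le$ is \emph{reverse} inclusion, $\bigwedge$ is union and $\bigvee$ is intersection; so each clause of Definition~\ref{d1} becomes an inclusion between subsets of $\Omega$. Before starting I would record two elementary facts. First, if $[a,x]$ is $\rho$-stable and $a<z\le x$, then $[a,z]$ is again $\rho$-stable and $\rho(a,z)=\rho(a,x)$ (immediate from Definition~\ref{dct2}). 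Second, similarity of intervals transports $\rho$-values: this is exactly axiom (1) of Definition~\ref{dct1}, and I will check below that it also transports $\rho$-stability.

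Axioms (1), (2) and (4) are light. For (1): given $r,l\in A$ the modular isomorphism $[l\wedge r,r]\cong[l,l\vee r]$, $z\mapsto z\vee l$, restricts for each $l\wedge r<z\le r$ to an isomorphism $[l\wedge r,z]\cong[l,l\vee z]$, and Definition~\ref{dct1}(1) gives $\rho(l\wedge r,w)=\rho(l,l\vee w)$ for all $l\wedge r\le w\le z$; hence $[l\wedge r,z]$ is $\rho$-stable iff $[l,l\vee z]$ is, with equal $\rho$-value, so $\Sigma_\rho(l\wedge r,r)=\Sigma_\rho(l,l\vee r)$. Axiom (2), namely $\Sigma_\rho(a,c)\subseteq\Sigma_\rho(a,b)$ for $a\le c\le b$, is immediate since a $\rho$-stable $[a,x]$ with $a<x\le c$ also has $x\le b$. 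For axiom (4) and directed $X\subseteq[a,\bar{1}]$ I must show $\Sigma_\rho(a,\bigvee X)=\bigcup_{x\in X}\Sigma_\rho(a,x)$; the inclusion $\supseteq$ follows from (2), and for $\subseteq$, given a $\rho$-stable $[a,y]$ with $a<y\le\bigvee X$, the set $\{y\wedge x\mid x\in X\}$ is directed and by (IDL) its join is $y\wedge\bigvee X=y>a$, so some $y\wedge x_{0}$ exceeds $a$; by the first recorded fact $[a,y\wedge x_{0}]$ is $\rho$-stable with $\rho(a,y\wedge x_{0})=\rho(a,y)$, whence $\rho(a,y)\in\Sigma_\rho(a,x_{0})$.

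The real content is axiom (3): $\Sigma_\rho(a,b)\subseteq\Sigma_\rho(a,c)\cup\Sigma_\rho(c,b)$ for $a\le c\le b$. Fix a $\rho$-stable $[a,x]$ with $a<x\le b$ and put $v=\rho(a,x)$. If $x\wedge c>a$, then $[a,x\wedge c]$ is $\rho$-stable with value $v$ and lies in $[a,c]$, so $v\in\Sigma_\rho(a,c)$. If $x\wedge c=a$, then $[a,x]=[c\wedge x,x]$ is similar to $[c,c\vee x]\subseteq[c,b]$ and Definition~\ref{dct1}(1) gives $\rho(c,c\vee x)=\rho(c\wedge x,x)=v$; the point is to show $[c,c\vee x]$ is $\rho$-stable. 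For $c<z\le c\vee x$, modularity gives $z=c\vee(z\wedge x)$ with $a<z\wedge x\le x$, and Definition~\ref{dct1}(1) with $l=c$, $r=z\wedge x$ yields $\rho(c,z)=\rho\big(c\wedge(z\wedge x),z\wedge x\big)=\rho(a,z\wedge x)=\rho(a,x)=\rho(c,c\vee x)$, the middle equality using $\rho$-stability of $[a,x]$. Hence $[c,c\vee x]$ is $\rho$-stable and $v\in\Sigma_\rho(c,b)$. I expect this transfer of $\rho$-stability along a similarity, in the case where the overlap $x\wedge c$ degenerates to $a$, to be the only genuinely delicate step; the rest is bookkeeping with the reversed order on $\mathcal{P}(\Omega)$.
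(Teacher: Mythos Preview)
Your proof is correct and follows essentially the same route as the paper's: the same case split on whether $x\wedge c>a$ or $x\wedge c=a$ for axiom~(3), and the same use of (IDL) to locate an $x_0$ with $y\wedge x_0>a$ for axiom~(4). You are in fact more careful than the paper in one place---you explicitly verify that the similar interval $[c,c\vee x]$ is $\rho$-stable, whereas the paper just asserts it ``from the above''.
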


\begin{proof}
Let  $\rho\in\rm{Rad}(A,\Omega)$. By definition of radical function the first requirement to be a allocation is clearly satisfied, that is, $\Sigma_{\rho}(r\wedge l,r)=\Sigma_{p}(l,r\vee l)$ for any $l,r\in A$.
Now, consider any interval $[a,c]$ and $a\leq b \leq c$. Then $\Sigma_{\rho}(a,b)\subseteq \Sigma_{\rho}(a,c)$.
Take any $\rho(a,x)\in\Sigma_{\rho}(a,c)$. If $a<b\wedge x$, then $\rho(a,x)=\rho(a,b\wedge x)\in\Sigma_{\rho}(a,b)$, and if $a=b\wedge x$ we have $\rho(a,x)=\rho(b\wedge x,x)=\rho(b,b\vee x)$. Thus, from the above this last interval is $\rho$-stable and then $\rho(a,x)\in\Sigma_{\rho}(b,c)$, that is, $\Sigma_{\rho}(a,c)\subseteq \Sigma_{\rho}(a,b)\cup\Sigma_{\rho}(b,c)$.
Now, for the last requirement take any $X\subseteq [a,\bar{1}]$ directed, for some $a\in A$ and consider $\rho(a,y)\in\Sigma_{\rho}(a,\bigvee X)$. Thus, from the idiom distributivity law we have $y=y\wedge (\bigvee X)=\bigvee\left\{y\wedge x\;|\; x\in X\right\}$. Then, $a<y\wedge x\leq x$ for some $x\in X$, and from $a<y\wedge x\leq y$ we derive $\rho(a,y)=\rho(a,y\wedge x)\in\Sigma_{\rho}(a,x)$. This proves that $\Sigma_{\rho}(a,\bigvee X)\subseteq\bigcup\left\{\Sigma_{\rho}(a,x)\;|\;x\in X\right\}$.
The other comparison follows immediately from the first property.
\end{proof}

Assume now that $\Omega$ is a complete lattice and consider $\varphi\in\rm{Sit}(A,\mathcal{P}(\Omega)^{op})$. Define the function $\varrho_{\varphi}:\EuScript{I}(A)\rightarrow \Omega$ by $\varrho_{\varphi}(a,b)=\bigwedge\varphi(a,b)$. This function is clearly a radical function. Thus we have two functions $$\xymatrix{ \rm{Sit}(A,\mathcal{P}(\Omega)^{op})\ar@<-.7ex> @/   _1pc/[r]_--{\varrho} &\rm{Rad}(A,\Omega)\ar@<-.7ex>@/ _1pc/[l]_--{\Sigma}       }$$ where $\Sigma(\rho)=\Sigma_{\rho}$ and $\varrho(\varphi)=\varrho_{\varphi}$. Observe now that we have a diagram
$$\xymatrix{ {\rm{Sit}(A,\Omega)}  \ar[rr]^{\iota}   \ar[drr]_{\left\{\_\right\}^{*}}  &&{\rm{Rad}(A,\Omega)}  \ar@<-.7ex>  @/   _1pc/[d]_--{\Sigma} \\   && {\rm{Sit}(A,\mathcal{P}(\Omega)^{op})}   \ar@<-.7ex>@/ _1pc/[u]_--{\varrho} } $$
 where $\left\{\_\right\}^{*}:\rm{Sit}(A,\Omega)\rightarrow \rm{Sit}(A,\mathcal{P}(\Omega)^{op})$ is the  morphism induced by the inclusion  $\Omega \rightarrow \mathcal{P}(\Omega)^{op}$. Note that $\varrho\circ\left\{\_\right\}^{*}=\iota$ and so one triangle is commutative, but the other one does not need to commute.

\begin{dfn}
\label{dct4}
For $\rho\in\rm{Rad}(A,\Omega)$, an interval $[a,b]$ is $\rho$-\emph{atomic} if $\Sigma_{\rho}(a,b)=\left\{*\right\}$. 
For $\rho\in\rm{Rad}(A,\Omega)$, the idiom $A$ is $\rho$-\emph{adequate} if $\Sigma_{\rho}(a,b)$ is not empty for every non-trivial interval $[a,b]$ of $A$. 
For  $\rho\in\rm{Rad}(A,\Omega)$ and a element $p\in \Omega$, an interval $[a,b]$ is $p$\emph{-inertial} if $\rho(a,b)=p$ and $[a,b]$ is $\rho$-stable.
\end{dfn}

From this observe that any $\rho$-stable interval $[a,b]$ is a $p=\rho(a,b)$-inertial. We will use $p$-inertial intervals to  generate a decomposition for the parent idiom. 

Using intervals $p$-inert  with respect to some $\varphi\in\rm{Sit}(A,\Omega)$ give us another look at allocations. For $p\in\Omega$, consider the frame $\large 2$ with two elements $0<1$, and define the function $p:\EuScript{I}(A)\longrightarrow \large 2$ by 
\begin{displaymath}
p(a,b)=\left\{\begin{array}{ll}
1 & \textrm{if $[a,b]$ is $p$-inertial} \\
0 & \textrm{if not}, 
\end{array}\right.
\end{displaymath}
 for each interval $[a,b]$.
\begin{prop}
\label{dtc5}
For each $p\in\Omega$, the function $p:\EuScript{I}(A)\longrightarrow \large 2$ is a $\large 2$-allocation, that is: 

\begin{itemize}
\item[(1)] For $l,r\in A$, $[l\wedge r,r]$ is $p$-inert $\Leftrightarrow [l,l\vee r]$ is $p$-inert.
\item[(2)] For $a\leq b\leq c$, $[a,c]$  $p$-inert $\Rightarrow$ $[a,b]$ is $p$-inert.
\item[(3)] For $a\leq b\leq c$, $[a,b]$ and $[b,c]$ $p$-inert $\Rightarrow$ $[a,c]$ is $p$-inert.
\item[(4)] For $a\in A$ and $X\subseteq [a,1]$, $[a,\bigvee X]$  is $p$-inert $\Leftrightarrow$ $(\forall x\in X)\left[[a,x] \text{ is } p\text{-inert}\right]$.
\end{itemize}
\end{prop}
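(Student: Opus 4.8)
The plan is to check directly, from a single reformulation of the hypothesis, that $p(\_,\_)$ satisfies the four conditions of Definition \ref{d1}. Unwinding Definitions \ref{dct2} and \ref{dct4}, an interval $[a,b]$ is $p$-inert exactly when $a<b$ and $\varphi(a,x)=p$ for every $x$ with $a<x\le b$: the value at $x=b$ records $\varphi(a,b)=p$, the remaining values record $\varphi$-stability. I use this equivalence throughout. Trivial intervals are not $p$-inert, and for them all four assertions hold under the consistent convention $p(a,a)=1$; so below I treat only non-trivial intervals. Note that items (1) and (2) of the statement will follow using only that $\varphi$ is a radical function, whereas items (3) and (4) genuinely invoke parts (3) and (4) of Definition \ref{d1} for the allocation $\varphi$.

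For condition (1): by modularity $x\mapsto l\vee x$ and $y\mapsto y\wedge r$ are mutually inverse lattice isomorphisms between $[l\wedge r,r]$ and $[l,l\vee r]$, so one interval is non-trivial iff the other is; and for $l\wedge r\le x\le r$ one has $l\wedge x=l\wedge r$, so condition (1) of Definition \ref{d1} applied to $\varphi$ for the pair $l,x$ gives $\varphi(l\wedge r,x)=\varphi(l\wedge x,x)=\varphi(l,l\vee x)$. As $x$ runs over $(l\wedge r,r]$ the element $y=l\vee x$ runs over $(l,l\vee r]$, so $[l\wedge r,r]$ is $p$-inert iff $[l,l\vee r]$ is. Condition (2): if $[a,c]$ is $p$-inert and $a<b\le c$, then every $x\in(a,b]$ lies in $(a,c]$, so $\varphi(a,x)=p$; hence $[a,b]$ is $p$-inert. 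The forward implication of (4) is the same remark applied to each $x\in X$, using $x\le\bigvee X$.

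Condition (3): assume $[a,b]$ and $[b,c]$ are $p$-inert (so $a<b<c$) and fix $z$ with $a<z\le c$; the goal is $\varphi(a,z)=p$. If $z\le b$ this is part of the hypothesis on $[a,b]$. Otherwise $b<z\vee b\le c$, so $[b,z\vee b]$ is a non-trivial subinterval of $[b,c]$ and $\varphi(b,z\vee b)=p$; axiom (1) for $\varphi$ gives $\varphi(z\wedge b,z)=\varphi(b,z\vee b)=p$. If moreover $z\wedge b=a$, then $\varphi(a,z)=\varphi(z\wedge b,z)=p$; otherwise $a<z\wedge b\le b$, so $\varphi(a,z\wedge b)=p$, and axioms (2) and (3) for $\varphi$ give $p=\varphi(a,z\wedge b)\wedge\varphi(z\wedge b,z)\le\varphi(a,z)\le\varphi(a,z\wedge b)=p$. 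In all cases $\varphi(a,z)=p$, and since $a<c$ this shows $[a,c]$ is $p$-inert.

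The remaining and most delicate point is the backward implication of (4), where I use that $X$ is directed (as in Definition \ref{d1}(4)) and non-empty. Put $b=\bigvee X$; then $a<b$. Given $a<z\le b$, the idiom distributivity law gives $z=z\wedge b=\bigvee\{z\wedge x\mid x\in X\}$, a directed join lying in $[a,z]$. Since this join is $z>a$, the subfamily $Y=\{z\wedge x\mid x\in X,\ z\wedge x>a\}$ is non-empty; it is again directed (take a common upper bound of the relevant $x$'s in $X$), and $\bigvee Y=z$ as well, the discarded terms being equal to $a$, the bottom of $[a,z]$. For each $w=z\wedge x\in Y$ we have $a<w\le x$, so $\varphi(a,w)=p$ because $[a,x]$ is $p$-inert; hence axiom (4) for $\varphi$ yields $\varphi(a,z)=\varphi(a,\bigvee Y)=\bigwedge\{\varphi(a,w)\mid w\in Y\}=p$, so $[a,b]$ is $p$-inert. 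The crux is exactly this passage to $Y$: one must drop the degenerate meets $z\wedge x=a$, whose $\varphi$-value is uncontrolled, while keeping directedness and the join $z$, so that the infimum produced by axiom (4) for $\varphi$ is $\bigwedge\{p\}=p$ rather than something strictly smaller.
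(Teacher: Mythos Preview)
Your proof is correct and follows the same approach as the paper: verify each allocation axiom for the $2$-valued function by unwinding the definition of $p$-inert and invoking the corresponding axiom for the ambient allocation $\varphi$. The only cosmetic difference is in the backward direction of (4), where the paper picks a single $x\in X$ with $y\wedge x>a$ and uses monotonicity to get $\varphi(a,y)\le\varphi(a,y\wedge x)=p$, while you instead apply axiom~(4) of $\varphi$ to the directed subfamily $Y$ of nontrivial meets; your case analysis in (3) is in fact more complete than the paper's, which writes out only the case $b\wedge x=a$.
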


\begin{proof}
(1): For any $l,r\in A$, first suppose that $[l\wedge r,r]$ is $p$-inert and consider $l<x\leq l\vee r$. Then, using the canonical isomorphism $[l\wedge r,r]\cong [l, l\vee r]$ we have that $x=y\vee l$ for some $y<l\wedge r\leq r$. Therefore, $\varphi(l,x)=\varphi(l,y\vee l)=\varphi(l\wedge r, y)=\varphi(l\wedge r,r)=p$, where the second equality is because the axioms of allocations and the third one is by the hypothesis.
The reverse implication is similar.

(2): Let  $a\leq b\leq c$ with $[a,c]$ $p=\varphi(a,c)$-inert. Then, for any $a<x\leq b$ we have $\varphi(a,x)=\varphi(a,c)=\varphi(a,b)$.

(3): Given $[a,b]$ and $[b,c]$ $p$-inert intervals and any $a<x\leq c$ we have that $p=\varphi(a,b)\wedge \varphi(b,c)\leq \varphi(a,c)\leq\varphi(a,x)$. Thus we only need to show that $\varphi(a,x)\leq p$.  First observe that $a\leq b\wedge x\leq b$. Now,  if $a=b\wedge x$ we have $b<b\vee x\leq c$, and therefore $\varphi(a,x)=\varphi(b\wedge x,x)=\varphi(b,b\vee x)=\varphi(b,c)=p$, where  the second equality comes from the fact that $[b,b\vee x]\cong [b\wedge x,x]$.
 
(4): Let $a\in A$ and $X\subseteq [a,\bar{1}]$ directed. It is enough to verify that $(\forall x\in X)\left[[a,x] \text{ is } p\text{-inert}\right]$ implies $[a,\bigvee X]$ is $p$-inert. To see this, note that since$X$ is directed we have that $\varphi(a,\bigvee X)=\bigwedge\left\{\varphi(a,x)\;|\;x\in X\right\}=p$. Consider  now $a<y\leq \bigvee X$.  Then, $p=\varphi(a,\bigvee X)\leq \varphi(a,y)$. To show the other comparison note that $y=y\wedge \left(\bigvee X\right)=\bigvee\left\{y\wedge x\;|\; x\in X\right\}$ using the idiom distributivity law. Then, for some $x\in X$ we have that $a\leq y\wedge x\leq y$ and $a< y\wedge x\leq x$. It follows that $\varphi(a,y)\leq\varphi(a,y\wedge x)=\varphi(a,x)=p$.
\end{proof}

Now, for $\varphi\in\rm{Sit}(A,\Omega)$, let $\EuScript{D}_{p}=\left\{[a,b]\;|\; [a,b]\text{ is  $p$-inert}\right\}$. The last proposition says that $\EuScript{D}_{p}$ is a congruence set in $A$. Moreover, we know that for any congruence set $\mathcal{C}$, if $[a,x],[a,y]\in\mathcal{C}$ then $[a,x\vee y],[a,x\wedge y]\in\mathcal{C}$. From this it is easily seen that $\mathcal{C}$ is closed under finite suprema.

\begin{cor}
\label{dtc5.}
The set $\EuScript{D}_{p}$ is a division set in $A$.
\end{cor}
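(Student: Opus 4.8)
The plan is to exploit the standard characterisation of a division set as a set of intervals that is simultaneously a congruence set and a pre-division set, and to notice that the congruence half is already available. Indeed, Proposition \ref{dtc5} shows that $\EuScript{D}_p$ is a congruence set — hence in particular basic, so closed under subintervals and under abutting intervals — and the remark preceding the statement records that a congruence set is closed under finite suprema with a fixed bottom element, i.e.\ $[a,x],[a,y]\in\EuScript{D}_p$ implies $[a,x\vee y]\in\EuScript{D}_p$. Thus the entire burden of the proof is to verify the pre-division axiom: for every $a\in A$ and every $X\subseteq[a,\bar 1]$ such that $[a,x]\in\EuScript{D}_p$ for all $x\in X$, one must show $[a,\bigvee X]\in\EuScript{D}_p$.

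The one place this does not follow immediately from what we have is that clause (4) of Proposition \ref{dtc5} only treats \emph{directed} families $X$, while the pre-division axiom demands an arbitrary $X$. I would bridge this in the usual way. Let $X^{\dagger}$ be the set of all finite joins $x_{1}\vee\cdots\vee x_{n}$ of elements of $X$ (interpreting the empty join as $a$, so that $X^{\dagger}\neq\varnothing$ even when $X=\varnothing$). Then $X^{\dagger}\subseteq[a,\bar 1]$, the family $X^{\dagger}$ is directed (the join of two finite joins of elements of $X$ is again one), and $\bigvee X^{\dagger}=\bigvee X$ because $X\subseteq X^{\dagger}$ while each member of $X^{\dagger}$ lies below $\bigvee X$. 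By the closure of the congruence set $\EuScript{D}_p$ under finite suprema over the common bottom $a$, we get $[a,z]\in\EuScript{D}_p$ for every $z\in X^{\dagger}$. Now clause (4) of Proposition \ref{dtc5} applies to the directed family $X^{\dagger}$ and yields $[a,\bigvee X^{\dagger}]=[a,\bigvee X]\in\EuScript{D}_p$, which is exactly the pre-division property. Together with the congruence property established by Proposition \ref{dtc5}, this shows $\EuScript{D}_p\in\EuScript{D}(A)$.

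There is no real obstacle here: the argument is a short assembly of the definitions from Section \ref{sec:sec2} together with the directed-join clause of the preceding proposition, and the only step that needs a moment's thought is the reduction from arbitrary joins to directed ones — together with the observation that the hypothesis that $[a,x]\in\EuScript{D}_p$ for all $x\in X$ propagates to all finite joins of elements of $X$, which is precisely the finite-suprema closure of congruence sets noted just before the corollary.
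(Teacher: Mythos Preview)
Your proof is correct and follows essentially the same route as the paper: both replace the arbitrary family $X$ by the directed family of its finite joins (the paper calls it $Y$, you call it $X^{\dagger}$), use the finite-suprema closure of congruence sets to see that each $[a,y]$ with $y$ in this family is $p$-inert, and then apply the directed case from clause~(4) of Proposition~\ref{dtc5}. The only cosmetic difference is that the paper re-runs the argument of~(4) explicitly rather than citing it.
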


\begin{proof}
Take any $a\in A$ and $X\subset [a,\bar{1}]$ with $[a,x]$ $p$-inert for all $x\in X$.  Let  $Y$ be the set of all elements of the form $x_{1}\vee x_{2}\ldots\vee x_{n}$, with $x_{i}\in X$ for $0\leq i\leq n$. This  is a directed set and $[a,y]$ is $p$-inert.  Using the same reasoning in the  proof of (4) in Proposition \ref{dtc5}, we have $p=\varphi(a,\bigvee Y)=\bigwedge\left\{\varphi(a,y)\;|\;y\in Y\right\}\leq\varphi(a,\bigvee X)\leq\bigwedge\left\{\varphi(a,x)\;|\; x\in X\right\}=p$, and for any $a<z\leq \bigvee X\leq \bigvee Y$ there is some $y\in Y$ with $a<z\wedge y\leq y$ then $\varphi(a,z)\leq\varphi(a,z\wedge y)=\varphi(a,y)=p$. The other comparison is clear.  
\end{proof}

\begin{dfn}
\label{dtc6}
Let be $[a,b]$ an interval over an idiom $A$. An element $a\leq x\leq b$ is a $p$\emph{-inertial point} or a $p$\emph{-stable point} in $[a,b]$, with $p\in\Omega$, if $[a,x]$ is $p$-inertial and if $x\wedge y=a$ then $[a,y]$ is not $p$-inertial for each $a\leq y\leq b$. The element $x$ is an \emph{inertial point} or  a \emph{stable point} in $[a,b]$, if it is a $p$-inertial point for some $p\in\Omega$.
\end{dfn}

For the remaining part of this  section we use concepts and results on independent sets on idioms as  in \cite{22}.
We start by showing that there are  enough inertial points in an idiom:

\begin{prop}
\label{dtc7}
Let $[a,b]$ be an interval of an idiom $A$, and $\rho\in\rm{Sit}(A,\Omega)$ with $\Omega$ a complete lattice and consider $p\in\Sigma_{\rho}(a,b)$. Then, for each $a\leq z\leq b$ with $[a,z]$ $p$-inertial there is a $p$-inertial point $z\leq x\leq b$ in $[a,b]$.
\end{prop}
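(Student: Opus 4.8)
The statement is a Zorn's Lemma argument, and the plan is to apply it to the poset of $p$-inertial extensions of $z$ inside $[a,b]$. First I would set
$$
\EuScript{P}=\left\{w\in[z,b]\;|\;[a,w]\text{ is }p\text{-inertial}\right\},
$$
which is nonempty since $z\in\EuScript{P}$. The goal is to produce a maximal element $x$ of $\EuScript{P}$ and then verify that maximality in $\EuScript{P}$ forces the ``inertial point'' condition of Definition \ref{dtc6}: if $a\leq y\leq b$ with $x\wedge y=a$ and $[a,y]$ were $p$-inertial, then by part (1) of Proposition \ref{dtc5} (the isomorphism $[x\wedge y,y]\cong[x,x\vee y]$) the interval $[x,x\vee y]$ would also be $p$-inert, and then by part (3) of Proposition \ref{dtc5} the abutting intervals $[a,x]$ and $[x,x\vee y]$ would combine to give $[a,x\vee y]$ $p$-inert; since $y\not\leq a$ forces $x<x\vee y$, this contradicts maximality of $x$. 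Hence $x$ is a $p$-inertial point in $[a,b]$, and by construction $z\leq x\leq b$.

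To run Zorn's Lemma I must check that every chain in $\EuScript{P}$ has an upper bound in $\EuScript{P}$. Given a chain $\{w_i\}\subseteq\EuScript{P}$, its supremum $w=\bigvee_i w_i$ lies in $[z,b]$, and the set $\{w_i\}$ is directed; so I would invoke part (4) of Proposition \ref{dtc5}, which says precisely that for a directed family $X\subseteq[a,\bar 1]$ one has $[a,\bigvee X]$ $p$-inert $\Leftrightarrow$ each $[a,x]$ $p$-inert. Applying it to $X=\{w_i\}$ (all of whose intervals $[a,w_i]$ are $p$-inertial) gives $[a,w]$ $p$-inertial, i.e. $w\in\EuScript{P}$. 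This is the only place upper-continuity of $A$ is used, and it is already packaged inside Proposition \ref{dtc5}(4).

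The main point to be careful about — the step I expect to be the only real obstacle — is the bookkeeping around ``$p$-inertial'' versus ``$p$-inert'': by Definition \ref{dct4} an interval is $p$-inertial when it is $\rho$-stable \emph{and} $\rho$ takes value exactly $p$ on it, so I must make sure that the congruence/division-set closure properties recorded in Proposition \ref{dtc5} and Corollary \ref{dtc5.} are being used with the correct value $p$ throughout (in particular, in the chain argument the common value is $p$ by Proposition \ref{dtc5}(4), and in the maximality argument the combined interval $[a,x\vee y]$ has value $\varphi(a,x\vee y)=\varphi(a,x)\wedge\varphi(x,x\vee y)=p\wedge p=p$). Once this is tracked consistently, everything else is a direct appeal to the already-proved parts of Proposition \ref{dtc5} together with Zorn's Lemma, so no further computation is needed.
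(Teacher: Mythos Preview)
Your argument is correct, but it differs from the paper's proof in the choice of poset to which Zorn's Lemma is applied. You maximize over the set $\EuScript{P}=\{w\in[z,b]\mid [a,w]\text{ is }p\text{-inertial}\}$ ordered by $\leq$, using only the directed-supremum closure of Proposition~\ref{dtc5}(4) for chains and the congruence-set closure (parts (1) and (3)) for the maximality step. The paper instead applies Zorn to the family $\Pi$ of \emph{independent} subsets $X\subseteq[a,b]$ containing $z$ with each $[a,x]$ $p$-inertial, ordered by inclusion; it then sets $x=\bigvee X$ for a maximal such $X$, invokes the division-set property of $\EuScript{D}_p$ (Corollary~\ref{dtc5.}) to get $[a,x]$ $p$-inertial, and concludes by observing that any $y$ with $x\wedge y=a$ would enlarge $X$ to an independent family $X\cup\{y\}$, forcing $[a,y]$ not $p$-inertial by maximality.

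Your route is a little more economical here: it avoids the auxiliary independence machinery and the passage from Proposition~\ref{dtc5} to Corollary~\ref{dtc5.}. The paper's choice of independent families, on the other hand, is aligned with what is needed shortly afterwards in Theorem~\ref{dtc11}, where the $\varphi$-decomposition is built from an independent family of inertial points; so their formulation sets up that later argument. Either approach is perfectly adequate for the proposition as stated.
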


\begin{proof}
We use Zorn's lemma: Consider the family $\Pi$ of subsets $X\subseteq [a,b]$ satisfying:
\begin{enumerate}
\item $z\in X$ .
\item $X$ is independent over $a$.
\item For each $x\in X$ the interval $[a,x]$ is $p$-inertial.
\end{enumerate}
By hypothesis,  $z$ is an inertial point and thus gives an element $\left\{z\right\}$ in $\Pi$. Inclusion is a partial order in
 $\Pi$. Consider any chain $\mathcal{Z}$ of elements of $\Pi$,  and its union $\bigcup\mathcal{Z}$. Clearly, $\bigcup\mathcal{Z}\in\Pi$ and by Zorn's lemma there exists a maximal member $X$ of $\Pi$.  If $x=\bigvee X$, then $a\leq x\leq b$ and by Proposition \ref{dtc5} it follows that $[a,x]$ is $p$-inertial.

Lastly, consider $a\leq y\leq b$ with $x\wedge y=a$. Then, the family $X\cup\left\{y\right\}$ is independent over $a$ and the maximality of $X$ implies that $[a,y]$ is not $p$-inertial.
\end{proof}

\begin{lem}
\label{dtc8}
Let be $A$ an idiom and $\varphi\in\rm{Sit}(A,\Omega)$. Suppose that $A$ is $\varphi$-adequate. Then, for each interval $[a,b]$ we have that 
$$\chi(a,b)=\bigwedge\left\{\chi(a,x)\;|\;a<x\leq b\text{ with $[a,x]$ }\varphi\text{-stable}\right\}.$$
\end{lem}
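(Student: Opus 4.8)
The plan is to prove the equality by establishing the two comparisons separately, using the monotonicity axiom for the $N(A)$-allocation $\chi$ together with $\varphi$-adequacy to control the right-hand side. First I would observe that the inequality $\chi(a,b)\leq\chi(a,x)$ for every $a<x\leq b$ is immediate from axiom (2) of Definition \ref{d1} applied to $\chi\in\rm{Sit}(A,N(A))$, since $a\leq x\leq b$. Taking the infimum over all $\varphi$-stable $x$ gives $\chi(a,b)\leq\bigwedge\{\chi(a,x)\mid a<x\leq b,\ [a,x]\ \varphi\text{-stable}\}$ for free; note this direction does not even need adequacy, only that the indexing set is nonempty (which adequacy provides when $[a,b]$ is non-trivial, and when $[a,b]$ is trivial the statement is vacuous or trivial).

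The substantive direction is the reverse inequality. Here the key idea is to write $b=\bigvee X$ where $X$ is a suitably chosen directed set of elements $x$ with $a<x\leq b$ and $[a,x]$ $\varphi$-stable, and then invoke axiom (4) of Definition \ref{d1} for $\chi$, which gives $\chi(a,b)=\chi(a,\bigvee X)=\bigwedge\{\chi(a,x)\mid x\in X\}$. If we can arrange that $X$ is cofinal among the $\varphi$-stable elements above $a$ in $[a,b]$ — or at least that $\bigwedge_{x\in X}\chi(a,x)$ equals the infimum over \emph{all} $\varphi$-stable $x$ — we are done. To build $X$: for each $a<z\leq b$ we want to find a $\varphi$-stable $z'$ with $z\leq z'\leq b$; then the set of all such $z'$, closed under finite joins, will be directed and have join $b$ provided we know $\varphi$-stable elements are "large enough" inside $[a,b]$.

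The main obstacle is precisely producing, inside an arbitrary non-trivial subinterval, a $\varphi$-stable element; this is where $\varphi$-adequacy enters in an essential way. Given $a<z\leq b$, adequacy applied to the interval $[a,z]$ (or to a suitable subinterval) yields a $\varphi$-stable interval $[a,w]$ with $a<w\leq z$, i.e. $\varphi(a,w)\in\Sigma_\varphi(a,z)\neq\varnothing$. This gives $\varphi$-stable elements arbitrarily low, but for axiom (4) I need them to \emph{join up to $b$}, so the right move is the dual: I would use a Zorn's-lemma / maximal-independent-family argument in the spirit of Proposition \ref{dtc7} to select a maximal independent family $\{x_i\}$ over $a$ with each $[a,x_i]$ $\varphi$-stable, set $x=\bigvee_i x_i$, and show (again via Proposition \ref{dtc5}, closure of congruence sets under finite joins, and Corollary \ref{dtc5.}) that $[a,x]$ is itself $\varphi$-stable and moreover $x=b$: if $x<b$, adequacy on $[x,b]$ — transported back via modularity to a subinterval of $[a,b]$ disjoint from $x$ over $a$ — would produce a $\varphi$-stable interval contradicting maximality. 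Once $x=b$ with $[a,b]=\bigvee_i[a,x_i]$ expressed through a directed set of $\varphi$-stable elements, axiom (4) for $\chi$ delivers $\chi(a,b)=\bigwedge_i\chi(a,x_i)$, which is $\geq\bigwedge\{\chi(a,x)\mid [a,x]\ \varphi\text{-stable}\}$, completing the reverse comparison. The delicate point to get right is the modularity bookkeeping that lets an independent enlargement inside $[x,b]$ be interpreted as an enlargement of the independent family inside $[a,b]$, so that adequacy can be applied where it is needed and the contradiction with maximality is genuine.
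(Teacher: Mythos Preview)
Your easy direction is fine, but the reverse inequality does not go through as planned; the approach has two genuine gaps.

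First, the claim that the finite joins of your maximal independent family are $\varphi$-stable is false in general. Proposition~\ref{dtc5} and Corollary~\ref{dtc5.} concern $p$-inert intervals for a \emph{fixed} $p\in\Omega$: they tell you that $\EuScript{D}_p$ is a division set. But the members of your family $\{x_i\}$ are $p_i$-inert for possibly different values $p_i$, and if $p_1\neq p_2$ then $[a,x_1\vee x_2]$ cannot be $\varphi$-stable, since $\varphi(a,x_1)=p_1\neq p_2=\varphi(a,x_2)$ with both $x_1,x_2\in (a,x_1\vee x_2]$. So your directed set $X$ of finite joins does not sit inside the indexing set of the infimum on the right-hand side, and axiom~(4) for $\chi$ applied to $X$ does not compare against that infimum.

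Second, even setting that aside, you cannot conclude $x=\bigvee_i x_i=b$. Adequacy on $[x,b]$ produces a $\varphi$-stable interval $[x,y]$ with $x<y\leq b$, but the ``modularity bookkeeping'' you allude to would require a complement of $x$ in $[a,y]$ to transport $[x,y]$ to some $[a,z]$ with $z\wedge x=a$; idioms are not complemented, so no such $z$ need exist. What adequacy actually yields (compare Theorem~\ref{dtc11}) is only that $x$ is \emph{large} in $[a,b]$, not that $x=b$.

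The paper avoids all of this by exploiting the \emph{defining} property of $\chi$ rather than axiom~(4): $\chi(a,b)$ is the largest nucleus $j$ with $j(a)\wedge b=a$. Setting $k=\bigwedge\{\chi(a,x)\mid a<x\leq b,\ [a,x]\ \varphi\text{-stable}\}$, it suffices to show $k(a)\wedge b=a$. If not, adequacy applied to the non-trivial interval $[a,k(a)\wedge b]$ yields a $\varphi$-stable $[a,x]$ with $a<x\leq k(a)\wedge b$; then $k\leq\chi(a,x)$, so $x\leq k(a)\leq\chi(a,x)(a)$ and hence $x=\chi(a,x)(a)\wedge x=a$, a contradiction. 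This is a two-line argument once you use the characterisation of $\chi$; your route via axiom~(4) is the wrong lever here.
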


\begin{proof}
Since $\chi$ is a $N(A)$-allocation,   
$$\chi(a,b)\leq\bigwedge\left\{\chi(a,x)\;|\;a<x\leq b\text{ with $[a,x]$ }\varphi\text{-stable}\right\}.$$
 For the other comparison, let  $\Xi=\left\{\chi(a,x)\;|\;a<x\leq b\text{ is }\varphi\text{-stable}\right\}$ and $k=\bigwedge\Xi$. If $a<k(a)\wedge b$, by  hypothesis 
there is $a<x\leq k(a)\wedge b$ with $[a,x]$ $\varphi$-stable. Then,  $\chi(a,x)\in\Xi$ and thus $k\leq\chi(a,x)$. Hence,  $x\leq k(a)\leq\chi(a,x)(a)$ and then $x=\chi(a,x)(a)\wedge x=a$, which is a contradiction. 
\end{proof}

The concept of $p$-inertial point is related to the concept of a large element:

\begin{lem}
\label{dtc9}
Let be $A$ an idiom, $\varphi$-adequate for some $\varphi\in\rm{Sit}(A,\Omega)$ and suppose $[a,b]$ is $\varphi$-atomic, that is, $\Sigma_{\varphi}(a,b)=\left\{p\right\}$. Then, any $p$-inertial point in $[a,b]$ is large in $[a,b]$.
\end{lem}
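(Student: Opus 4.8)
The plan is to argue by contradiction, essentially by unwinding the definitions. Recall (Definition \ref{dtc6}) that $x$ being a $p$-inertial point in $[a,b]$ means that $[a,x]$ is $p$-inertial and that, for every $a\le y\le b$, the equality $x\wedge y=a$ forces $[a,y]$ not to be $p$-inertial; and recall that $x$ is large in $[a,b]$ precisely when the only $y\in[a,b]$ with $x\wedge y=a$ is $y=a$. So I would fix a $p$-inertial point $x$ in $[a,b]$, assume it is not large, and choose some $a<y\le b$ with $x\wedge y=a$.

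Next I would use $\varphi$-adequacy: since $a<y$, the interval $[a,y]$ is non-trivial, so $\Sigma_{\varphi}(a,y)\neq\varnothing$; pick $q\in\Sigma_{\varphi}(a,y)$, which by definition of the support is witnessed by some $a<z\le y$ with $[a,z]$ $\varphi$-stable and $\varphi(a,z)=q$. Because $a<z\le b$ and $[a,z]$ is $\varphi$-stable, we get $q=\varphi(a,z)\in\Sigma_{\varphi}(a,b)$ (this is the monotonicity $\Sigma_{\varphi}(a,y)\subseteq\Sigma_{\varphi}(a,b)$ established in the proof of Proposition \ref{dct3}), and $\varphi$-atomicity of $[a,b]$ gives $\Sigma_{\varphi}(a,b)=\{p\}$, hence $q=p$. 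Thus $[a,z]$ is $\varphi$-stable with $\varphi(a,z)=p$, i.e. $[a,z]$ is $p$-inertial. On the other hand, $z\le y$ yields $x\wedge z\le x\wedge y=a$, so $x\wedge z=a$ with $a\le z\le b$; by the defining property of a $p$-inertial point this forces $[a,z]$ not to be $p$-inertial, contradicting the previous sentence. Therefore no such $y$ exists, so $x$ is large in $[a,b]$.

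The argument is essentially a definition chase, and the only step carrying real content is the interplay between $\varphi$-adequacy (which produces a $\varphi$-stable subinterval inside $[a,y]$) and $\varphi$-atomicity (which pins that subinterval's value down to $p$, hence makes it $p$-inertial), after which the definition of a $p$-inertial point closes the loop. I do not anticipate a genuine obstacle; the only points requiring a little care are that $\varphi$-stability of an interval is intrinsic to that interval and so is preserved when it is regarded as a subinterval of $[a,b]$, and the bookkeeping of the inclusion $\Sigma_{\varphi}(a,y)\subseteq\Sigma_{\varphi}(a,b)$ coming from $a\le y\le b$.
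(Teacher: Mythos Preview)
Your proof is correct and follows essentially the same route as the paper's: both argue by contradiction, pick a disjoint $y>a$, use $\varphi$-adequacy to produce a $\varphi$-stable $[a,z]$ with $z\le y$, observe $x\wedge z=a$, and derive a contradiction with the $p$-inertial point property of $x$. Your write-up is in fact more explicit than the paper's, which jumps straight from ``$[a,z]$ is $\varphi$-stable'' to the contradiction without spelling out the use of $\varphi$-atomicity to force $\varphi(a,z)=p$ (so that $[a,z]$ is actually $p$-inertial, which is what the definition of $p$-inertial point requires for the contradiction).
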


\begin{proof}
Suppose $x$ is a $p$-inertial point in $[a,b]$.  Then, $[a,x]$ is $p$-inert in $[a,b]$. Consider any $y\in [a,b]$ with $a=x\wedge y$ and suppose $a<y$. Since $A$ is  $\varphi$-adequate, there is some $a<z\leq y$ with $[a,z]$ $\varphi$-stable. Then $a\leq z\wedge x\leq x\wedge y=a$, which contradicts the $p$-point property of $x$.
\end{proof}

We can now extend the definition of decomposition for a interval $[a,b]$ over an idiom $A$.

\begin{dfn}
\label{dtc10}
Let $A$ be an idiom and $\varphi\in\rm{Sit}(A,\Omega)$. A $\varphi$\emph{-decomposition} of an interval $[a,b]$ of $A$ is a family $X=\left\{x_{p}\:|\; p\in\Sigma_{\varphi}(a,b)\right\}$ of elements of $[a,b]$ indexed by the support of $\varphi$ such that:

\begin{itemize}
\item[(1)] $X$ is independent over $a$.
\item[(2)] $\bigvee X$ is large in $[a,b]$.
\item[(3)] The interval $[a,x_{p}]$ is $p$-inert for each $p\in\Sigma_{\varphi}(a,b)$.
\end{itemize}
\end{dfn}
\begin{thm}
\label{dtc11}
For an idiom $A$ and $\varphi$ a $\Omega$-allocation, the following are equivalent:
\begin{itemize}
\item[(1)] Each non-trivial interval of $A$ has a $\varphi$-decomposition.
\item[(2)] $A$ is $\varphi$-adequate.
\end{itemize}
\end{thm}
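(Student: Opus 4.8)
The direction $(1)\Rightarrow(2)$ is the easy one. If $[a,b]$ is non-trivial and $X=\{x_p\mid p\in\Sigma_\varphi(a,b)\}$ is a $\varphi$-decomposition of it, then $\bigvee X$ is large in $[a,b]$, so $\bigvee X\neq a$; since $\bigvee\emptyset=a$, the family $X$ must be non-empty, hence $\Sigma_\varphi(a,b)\neq\emptyset$. As this holds for every non-trivial interval, $A$ is $\varphi$-adequate.

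For $(2)\Rightarrow(1)$, fix a non-trivial interval $[a,b]$ and produce a decomposition by Zorn's lemma. The plan is to take the poset $\Pi$ of pairs $(P,(x_p)_{p\in P})$ with $P\subseteq\Sigma_\varphi(a,b)$, with $\{x_p\}_{p\in P}$ independent over $a$, and with $[a,x_p]$ a $p$-inert interval for each $p$, ordered by $(P,(x_p))\leq(P',(x'_p))$ iff $P\subseteq P'$ and $x_p\leq x'_p$ for all $p\in P$ --- I deliberately allow the chosen elements to grow, which is what makes the ``merging'' step below go through. A chain in $\Pi$ has an upper bound obtained by taking the union of the index sets and the (directed) joins of the matching components; checking that this stays in $\Pi$ uses that $\EuScript{D}_p$ is a division set (Corollary~\ref{dtc5.}), hence closed under the relevant joins of $p$-inert intervals, together with the standard fact from \cite{22} that a family in an idiom is independent over $a$ iff every finite subfamily is, which with the idiom distributive law lets directed joins pass through the meets defining independence. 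Let $(P,(x_p)_{p\in P})$ be a maximal element and put $e=\bigvee_{p\in P}x_p$.

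The next step is to show $e$ is large in $[a,b]$. Suppose not and pick $a<y\leq b$ with $y\wedge e=a$. Since $A$ is $\varphi$-adequate and $[a,y]$ is non-trivial, there is $a<z\leq y$ with $[a,z]$ $\varphi$-stable; write $q=\varphi(a,z)$, so $q\in\Sigma_\varphi(a,y)\subseteq\Sigma_\varphi(a,b)$ by monotonicity of $\Sigma_\varphi$ (Proposition~\ref{dct3}), and $z\wedge e\leq y\wedge e=a$. If $q\notin P$, then $\{x_p\}_{p\in P}\cup\{z\}$ with $z$ indexed by $q$ is independent over $a$ (by the independence facts of \cite{22}, since $z\wedge e=a$) and $[a,z]$ is $q$-inert, strictly enlarging the maximal element --- contradiction. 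If $q\in P$, then $z\wedge x_q=a$ since $x_q\leq e$, the interval $[a,x_q\vee z]$ is $q$-inert because the congruence set $\EuScript{D}_q$ is closed under joins, and modularity gives $(x_q\vee z)\wedge e=x_q\vee(z\wedge e)=x_q$, so $(x_q\vee z)\wedge\bigvee_{p\neq q}x_p=x_q\wedge\bigvee_{p\neq q}x_p=a$; replacing $x_q$ by $x_q\vee z$ again produces a strictly larger element of $\Pi$ --- contradiction. Hence $e$ is large.

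It remains to show $P=\Sigma_\varphi(a,b)$, which together with the above makes $\{x_p\}_{p\in P}$ a $\varphi$-decomposition of $[a,b]$. Given $q\in\Sigma_\varphi(a,b)$, choose $a<z\leq b$ with $[a,z]$ $q$-inert. Since $e$ is large, $v:=z\wedge e>a$, and $[a,v]$ is $q$-inert as a subinterval of $[a,z]$. Writing $v$ via the idiom distributive law as the directed join of the $v\wedge\bigvee_{p\in F}x_p$ over finite $F\subseteq P$, some such meet exceeds $a$, giving a finite $F=\{p_1,\dots,p_n\}\subseteq P$ and $a<v'\leq\bigvee_{p\in F}x_p$ with $[a,v']$ $q$-inert. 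An induction on $n$ then forces $q\in F$: with $u=\bigvee_{i<n}x_{p_i}$, the modular isomorphism $[v'\wedge u,v']\cong[u,u\vee v']$ realises $[v'\wedge u,v']$ as a subinterval of $[u,u\vee x_{p_n}]\cong[a,x_{p_n}]$, which is $p_n$-inert; so either $v'\leq u$ and the inductive hypothesis applies, or $v'\wedge u=a$ and then $[a,v']$ is both $q$-inert and $p_n$-inert so $q=p_n$, or $a<v'\wedge u\leq u$ and then $[a,v'\wedge u]$ is $q$-inert (a subinterval of the $q$-inert $[a,v']$) and the inductive hypothesis applies; the base case $n=1$ is $[a,v']\subseteq[a,x_{p_1}]$ with $[a,x_{p_1}]$ $p_1$-inert, forcing $q=\varphi(a,v')=p_1$. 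This final coverage step is, to my mind, the main obstacle: it is essentially a Krull--Schmidt-type uniqueness assertion, and running the induction cleanly requires care with the modular-lattice isomorphisms and with the case split $v'\wedge u=a$ versus $v'\wedge u>a$; by contrast the bookkeeping in the Zorn step --- that directed joins preserve independence over $a$ and $p$-inertness --- is routine given Corollary~\ref{dtc5.} and the independence machinery of \cite{22}, though it should be written out.
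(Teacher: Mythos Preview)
Your argument is correct, but the route differs from the paper's. The paper does not run Zorn on partial decompositions; instead it invokes Proposition~\ref{dtc7} to pick, for every $p\in\Sigma_\varphi(a,b)$, a $p$-\emph{inertial point} $x_p$ in $[a,b]$, so that coverage $P=\Sigma_\varphi(a,b)$ is automatic and largeness of $\bigvee_p x_p$ follows in one line from the defining property of an inertial point. Independence is then obtained from the $\mathcal{P}(\Omega)^{\mathrm{op}}$-allocation $\Sigma_\varphi$ of Proposition~\ref{dct3}: with $y=x_{p_1}\vee\cdots\vee x_{p_n}$ one has $\Sigma_\varphi(a,y)=\{p_1,\dots,p_n\}$ and $\Sigma_\varphi(a,x_p)=\{p\}$, so $\Sigma_\varphi(a,y\wedge x_p)\subseteq\{p_1,\dots,p_n\}\cap\{p\}=\emptyset$, and adequacy forces $y\wedge x_p=a$. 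Thus the paper trades many small Zorn arguments (one per $p$, hidden in Proposition~\ref{dtc7}) plus the support calculus for your single global Zorn step together with the case analysis for largeness and the modular induction for coverage. Your approach is more self-contained and, as you observe, surfaces the Krull--Schmidt-type uniqueness explicitly; the paper's is shorter given the scaffolding already in place. Note that the same support trick would collapse your coverage step: since $\Sigma_\varphi(a,v')=\{q\}$ and $\Sigma_\varphi\bigl(a,\bigvee_{p\in F}x_p\bigr)=F$, monotonicity of $\Sigma_\varphi$ gives $q\in F$ immediately, without the induction.
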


\begin{proof}
Assuming $(1)$, every non-trivial interval $[a,b]$ in $A$ has a $\varphi$-decomposition  $X$ of $[a,b]$, with $\bigvee X$ large in $[a,b]$.  Then, this element is not $a$ and so $X$ is not empty. Thus, $\Sigma_{\varphi}(a,b)$ is not empty.

Now assume $(2)$ and consider any non-trivial interval $[a,b]$ of $A$. By Proposition \ref{dtc7} there is a family $X=\left\{x_{p}\;|\; p\in\Sigma_{\varphi}(a,b)\right\}\subseteq [a,b]$ such that $x_{p}$ is a $p$-inertial point in $[a,b]$, and $[a,x_{p}]$ are $p$-inert intervals. To verify parts (1) and (2) of Definition \ref{dtc10} it is enough to prove that $X$ is independent over $a$. Thus, we only need to check that every finite subset of $X$ is independent over $a$. Let $Y$ be a finite subset of $X$. We do induction on the cardinality of $Y$.  Consider $p,p_{1}\dots,p_{n}$ distinct elements of $\Sigma_{\varphi}(a,b)$ such that $Y=\left\{p,p_{1}\dots,p_{n}\right\}$. By  induction hypothesis we know that ${x_{p_{1}}\dots,x_{p_{n}}}$ are independent over $a$. To show the independence of $Y\cup{x_{p}}$ over $a$, let $y=\bigvee Y$. Then, $\Sigma_{\varphi}(a,y)=\bigwedge\left\{p_{1},\ldots,p_{n}\right\}$ because $\Sigma_{\varphi}$ is a $\mathcal{P}(\Omega)$-allocation, and we also have that $\Sigma_{\varphi}(a,x_{p})={p}$.  Then, $\Sigma_{\varphi}(a,y\wedge x_{p})=\Sigma_{\varphi}(a,y)\cap\Sigma_{\varphi}(a,x_{p})=\emptyset$ by $(2)$ of Definition \ref{d1}.  Since $A$ is $\varphi$-adequate, then $x\wedge x_{p}=a$  and thus $Y\cup{x_{p}}$ is independent over $a$. To verify $(2)$ of  Definition \ref{dtc10} suppose that $x=\bigvee X$ is not large in $[a,b]$, that is, there exists $a<y\leq b$ with $x\wedge y=a$.  By the hypothesis (2) we can assume that $[a,y]$ is $\varphi$-inert with $\varphi(a,y)=p\in\Sigma_{\varphi}(a,b)$. Thus, the element $x_{p}$ is a $p$-inertial point in $[a,b]$ and $x_{p}\wedge y\leq x\wedge y=a$, which is a contradiction. 
\end{proof}

Theorem \ref{dtc11} is a bit more general than Theorem $8.2$ in \cite{25} which is the special case of an $NA$-allocation $\chi$. In  \cite{25} the author applies this to geo-lattices and the decomposition theory generated by $\chi$ in connection with certain spatial properties of the corresponding idiom, that is, any $\chi$-stable interval $[a,b]$ gives a point (a $\wedge$-irreducible element) of $N(A)$. Thus, the resulting decomposition theory has a more module theoretic flavour. 
%%%%%%%%%%%%%%%%
\section{Some constructions in $\rm{App}(A,\Lambda)$}\label{sec:sec5}

The concept of dimension in an idiom can be stated in many ways, depending of the context, for example via inflators and nuclei as in \cite{28}. In this section we will give the lattice theoretical constructions of these via $\rm{App}(A,\Lambda)$. these constructions are the idiomatic version of the one developed  in \cite{7}. 

Let $\Lambda$ be a complete lattice with $\top,\bot$ his top and his bottom elements, respectively.  Denote by $\propto(\Lambda)$ the minimum of all cardinals $\iota$ such that $\iota> \#\Lambda$.  Let $\infty(\Lambda)=\left\{\kappa \;|\;\kappa \text{ is an ordinal and } \kappa\leq \propto(\Lambda)\right\}$.
Define 
$$\text{\rm seq}(\Lambda)=\left\{h:\infty(\Lambda)\rightarrow \Lambda\; |\; h\; \text{\rm is  increasing and }h(0)=\bot,\;\; h(\propto(\Lambda))=\top  \right\}.$$
 Note that $\rm{seq}(\Lambda)$ is a complete lattice in the usual way. Now, let $h\in\rm{seq}(A)$;  then, there is an ordinal $\alpha<\propto(\Lambda)$ such that $h(\alpha)=h(\alpha+1)=\cdots$. The least of these ordinals will be denoted by 
 $\rm{Bnd}(h)$.

Let be $A$ an idiom and $\Lambda$ a complete lattice, for $\psi\in\rm{App}(A,\Lambda)$ and $h\in\rm{seq}(\Lambda)$. We dine $\mathfrak{d}^{\psi}_{h}: \EuScript{I}(A)\rightarrow \infty(\Lambda)$ inductively as follows:
\medskip

\noindent{\rm (i)} $\mathfrak{d}^{\psi}_{h}(a)=0$ for all $a\in A$.
\medskip

\noindent{\rm (ii)} If $[a,b]\in\EuScript{I}(A)$ is non trivial, then 
$$\mathfrak{d}^{\psi}_{h}(a,b)=\inf\{0\leq \iota\leq \propto(\Lambda)\;|\; \psi(a,b)\leq h(\iota)\}.$$

\begin{prop}
\label{d13}
Let be $A$ an idiom and $\Lambda$ a complete lattice. Then, the function $\mathfrak{d}^{\psi}_{h}$ is a $\infty(\Lambda)$-aspect for each $\psi\in\rm{App}(A,\Lambda)$ and $h\in\rm{seq}(\Lambda)$.
Moreover:
\begin{itemize}
\item[(1)] For every $\psi\in\rm{App}(A,\Lambda)$, the function $\mathfrak{d}^{\psi}:\rm{seq}(\Lambda)^{op}\rightarrow \rm{App}(A,\infty(\Lambda))$ is a $\bigvee$-morphism.

\item[(2)] For every $h\in \rm{seq}(\Lambda)$, the function $\mathfrak{d}_{h}:\rm{App}(A,\Lambda)\rightarrow \rm{App}(A,\infty(\Lambda))$  is a $\bigvee$-morphism.
\end{itemize}
\end{prop}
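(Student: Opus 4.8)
The plan is to verify the three aspect axioms for $\mathfrak{d}^{\psi}_{h}$ directly from the definition, using the corresponding aspect axioms for $\psi$ and the fact that $h$ is increasing, and then to deduce the two monotonicity/join-preservation statements by unwinding the infimum formula. First I would dispose of axiom (1): since $\psi$ is a $\Lambda$-aspect we have $\psi(l\wedge r,r)=\psi(l,l\vee r)$, and $\mathfrak{d}^{\psi}_{h}$ is defined by applying the same rule $\inf\{\iota\mid\psi(-)\leq h(\iota)\}$ to both sides, so equality is immediate (and the trivial-interval convention makes the degenerate cases vacuous).

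For axiom (2), fix $a\leq c\leq b$ and write $\iota_{0}=\mathfrak{d}^{\psi}_{h}(a,b)$, $\iota_{1}=\mathfrak{d}^{\psi}_{h}(a,c)$, $\iota_{2}=\mathfrak{d}^{\psi}_{h}(c,b)$. I want $\iota_{0}=\iota_{1}\vee\iota_{2}$. For $\iota_{0}\leq\iota_{1}\vee\iota_{2}$: at the ordinal $\mu=\iota_{1}\vee\iota_{2}$ we have $\psi(a,c)\leq h(\iota_{1})\leq h(\mu)$ and $\psi(c,b)\leq h(\iota_{2})\leq h(\mu)$ (here I use that $h$ is increasing), hence $\psi(a,b)=\psi(a,c)\vee\psi(c,b)\leq h(\mu)$, so $\iota_{0}\leq\mu$. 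For the reverse, $\psi(a,c)\leq\psi(a,b)$ and $\psi(c,b)\leq\psi(a,b)$ — these follow from axiom (2) of Definition~\ref{d7} since $\psi(a,b)=\psi(a,c)\vee\psi(c,b)$ — so $\psi(a,c)\leq h(\iota_{0})$ and $\psi(c,b)\leq h(\iota_{0})$, giving $\iota_{1}\leq\iota_{0}$ and $\iota_{2}\leq\iota_{0}$, hence $\iota_{1}\vee\iota_{2}\leq\iota_{0}$. One should also check that the infima involved are attained, i.e. that $\psi(a,b)\leq h(\propto(\Lambda))=\top$ always holds, which is clear, so every such set of ordinals is nonempty and has a minimum in $\infty(\Lambda)$; and when $[a,b]$ is trivial one side or both may collapse to $0$, handled by the convention.

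For axiom (3), let $X\subseteq[a,\bar 1]$ be directed and set $\iota_{x}=\mathfrak{d}^{\psi}_{h}(a,x)$, $\iota_{\infty}=\mathfrak{d}^{\psi}_{h}(a,\bigvee X)$. The comparison $\iota_{x}\leq\iota_{\infty}$ for each $x$ follows as in axiom (2) since $\psi(a,x)\leq\psi(a,\bigvee X)$, so $\sup_{x}\iota_{x}\leq\iota_{\infty}$. For $\iota_{\infty}\leq\sup_{x}\iota_{x}$: put $\nu=\sup\{\iota_{x}\mid x\in X\}$, an ordinal $\leq\propto(\Lambda)$; then $\psi(a,x)\leq h(\iota_{x})\leq h(\nu)$ for every $x$, and since $\psi$ is an aspect $\psi(a,\bigvee X)=\bigvee\{\psi(a,x)\mid x\in X\}\leq h(\nu)$, whence $\iota_{\infty}\leq\nu$. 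This is the step I expect to be the only genuinely delicate point: one must be sure the join on the ordinal side behaves well, i.e. that $h(\sup_{x}\iota_{x})$ dominates each $h(\iota_{x})$ (true by monotonicity of $h$) and that $\sup_{x}\iota_{x}$ still lies in $\infty(\Lambda)$ (true because $\#X\le\#\Lambda<\propto(\Lambda)$ forces the supremum below $\propto(\Lambda)$, or it equals $\propto(\Lambda)$ and then $h$ of it is $\top$). Directedness of $X$ is used only through the aspect axiom for $\psi$, not separately.

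Finally, the two displayed items. For (2), if $\psi\leq\psi'$ in $\rm{App}(A,\Lambda)$ then for each interval the defining set $\{\iota\mid\psi'(a,b)\leq h(\iota)\}$ is contained in $\{\iota\mid\psi(a,b)\leq h(\iota)\}$, so $\mathfrak{d}^{\psi}_{h}\leq\mathfrak{d}^{\psi'}_{h}$, giving monotonicity of $\mathfrak{d}_{h}$; to see it is a $\bigvee$-morphism, take a family $\{\psi_{i}\}$ with join $\psi=\bigvee\psi_{i}$ in $\rm{App}(A,\Lambda)$ (computed pointwise as $\psi(a,b)=\bigvee_{i}\psi_{i}(a,b)$), and observe $\psi(a,b)\leq h(\iota)\iff\psi_{i}(a,b)\leq h(\iota)\ \forall i$, so the defining set for $\psi$ is the intersection of those for the $\psi_{i}$, and taking minima converts intersection of down-closed-complement sets into supremum: $\mathfrak{d}^{\psi}_{h}(a,b)=\sup_{i}\mathfrak{d}^{\psi_{i}}_{h}(a,b)=(\bigvee_{i}\mathfrak{d}^{\psi_{i}}_{h})(a,b)$. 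For (1), $h\mapsto\mathfrak{d}^{\psi}_{h}$ is order-reversing in $h$ because a larger $h$ makes $\psi(a,b)\leq h(\iota)$ easier, shrinking the infimum; and for a family $\{h_{i}\}$ with meet $h=\bigwedge h_{i}$ in $\rm{seq}(\Lambda)$ (pointwise), one has $\psi(a,b)\leq h(\iota)\iff\psi(a,b)\leq h_{i}(\iota)\ \forall i$, so again the defining set for $h$ is the intersection of those for the $h_{i}$ and the infimum is the supremum of the individual infima, i.e. $\mathfrak{d}^{\psi}_{\bigwedge h_{i}}=\bigvee_{i}\mathfrak{d}^{\psi}_{h_{i}}$, which is exactly the statement that $\mathfrak{d}^{\psi}\colon\rm{seq}(\Lambda)^{op}\to\rm{App}(A,\infty(\Lambda))$ preserves joins. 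In both items the pointwise description of joins in $\rm{App}(A,-)$ from the discussion after Definition~\ref{d7} is what makes the argument go through.
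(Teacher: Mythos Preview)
Your argument is correct and follows essentially the same route as the paper's proof: verify the three aspect axioms for $\mathfrak{d}^{\psi}_{h}$ directly from those of $\psi$ together with monotonicity of $h$, and then for items (1) and (2) use that $\psi(a,b)\le h(\iota)$ iff the same inequality holds componentwise for a meet of $h$'s or a (pointwise) join of $\psi$'s, so the defining infimum turns into a supremum. One small slip: in your check of axiom~(3) you write ``$\#X\le\#\Lambda<\propto(\Lambda)$'', but $X$ is a subset of the idiom $A$, not of $\Lambda$, so this bound is unjustified; fortunately it is also unnecessary, since $\infty(\Lambda)$ is by construction the ordinal $\propto(\Lambda)+1$ and hence a complete lattice, so $\sup_{x}\iota_{x}$ automatically lies in $\infty(\Lambda)$.
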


\begin{proof}
Let $\psi\in\rm{App}(A,\Lambda)$ and $h\in \rm{seq}(\Lambda)$. The first requirement of Definition \ref{d7} is clearly satisfied. Now consider any  non-trivial interval $[a,c]$ on $A$ and take $a\leq b\leq c$.  Then, $\psi(a,c)=\psi(a,b)\vee\psi(b,c)$ and so $\mathfrak{d}^{\psi}_{h}(a,c)=\sup\{\mathfrak{d}^{\psi}_{h}(a,b),\mathfrak{d}^{\psi}_{h}(b,c)\}$. If $X$ is a directed subset of $[a,\bar{1}]$, then $\psi(a,\bigvee X)=\bigvee\{\psi(a,x)\;|\; x\in X\}$. Therefore $\mathfrak{d}^{\psi}_{h}(a,\bigvee X)=\sup\{\mathfrak{d}^{\psi}_{h}(a,x)\;|\; x\in X\}$.

Let $H=\left\{h_{j}\;|\; j\in J\right\}$ be a family in $\rm{seq}(\Lambda)$, and $h=\bigwedge H$. Consider any interval $[a,b]$ on $A$, and let $\iota=\mathfrak{d}^{\psi}_{h}(a,b)$,  $B(j)=\mathfrak{d}^{\psi}_{h_{j}}(a,b)$, and $B=\sup\left\{B(j)\;|\; j\in J\right\}$, for each $j\in J$. Then, $\psi(a,b)\leq h(\iota)\leq h_{j}(\iota)$ and thus $\iota\geq B(j)$, for each $j\in J$.  Hence $\iota\geq B$.
For the other comparison, we have $\psi(a,b)\leq h_{j}(B)$ for each $j \in J$. Then, $\psi(a,b)\leq h(B)$ and thus $\iota\leq B$, that is, $\iota=B$. This proves assertion (1).

Next, consider $\psi=\bigvee\left\{\psi_{j}\;|\; j\in J\right\}$ in $\rm{App}(A,\Lambda)$ and $[a,b]$ an interval over $A$. If $\iota=\mathfrak{d}^{\psi}_{h}(a,b)$, $B(j)=\mathfrak{d}^{\psi_{j}}_{h}(a,b)$, and $B=\sup\left\{B(j)\;|\; j\in J\right\}$, for each $j\in J$, then $\psi_{j}(a,b)\leq h(B(j))\leq h(B)$ for each $j\in J$.  Thus, $\psi(a,b)\leq h(B)$ and so $\iota\leq B$.  Now, if this inequality is strict, then there exists a $j\in J$ such that $B(j)>\iota$. Hence, $\psi_{j}(a,b)\nleq h(\iota)$ and so $\psi(a,b)\nleq h(\iota)$, which is a contradiction. So we must have $\iota=B$, and this proves assertion (2). 
\end{proof}

Note that for every $\alpha\in\Lambda$ we have an embedding from $\Lambda$ into $\rm{seq}(\Lambda)$ given by $\alpha\mapsto h^{\alpha}$, where $h^{\alpha}(\iota)=\alpha$. With this and the definitions above we obtain:

\begin{cor}
\label{d14}
Let be $A$ and idiom and $\Lambda$ a complete lattice. Then, for any element $\alpha\in\Lambda$ the diagram: $$\xymatrix{ \rm{App}(A,\Lambda)^{op}\ar[rr]^{\mathfrak{d}_{h^{\alpha}}}\ar[dr]_{\EuScript{M}(\_,\alpha)} && \rm{App}(A,\infty(\Lambda))\ar[dl]^{\EuScript{M}(\_,0)}\\ & \EuScript{C}(A)  }$$ conmutes.
\qed
\end{cor}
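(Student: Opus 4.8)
The claim is that for each $\alpha\in\Lambda$ the triangle
$$\EuScript{M}(\_,0)\circ\mathfrak{d}_{h^{\alpha}}=\EuScript{M}(\_,\alpha)$$
as maps $\rm{App}(A,\Lambda)^{op}\to\EuScript{C}(A)$ commutes. Both sides send a $\Lambda$-aspect $\psi$ to a congruence set of intervals, so the plan is simply to compare membership of an arbitrary interval $[a,b]$ in the two sets, handling the trivial and non-trivial cases separately. The whole argument reduces to unwinding the definitions, so I expect it to be short.

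First, fix $\psi\in\rm{App}(A,\Lambda)$ and $\alpha\in\Lambda$, and write $h=h^{\alpha}$, so $h(\iota)=\alpha$ for every $\iota\in\infty(\Lambda)$. By Proposition \ref{d13} the function $\mathfrak{d}^{\psi}_{h}$ is a $\infty(\Lambda)$-aspect, and by Proposition \ref{d10} both $\EuScript{M}(\mathfrak{d}^{\psi}_{h},0)$ and $\EuScript{M}(\psi,\alpha)$ are congruence sets, so it suffices to show they consist of the same intervals. For a trivial interval $[a,a]$ we have $\mathfrak{d}^{\psi}_{h}(a)=0\leq 0$ by clause (i), so $[a,a]\in\EuScript{M}(\mathfrak{d}^{\psi}_{h},0)$; and on the other side $\psi(a,a)\leq\alpha$ holds automatically (take $X=\{a\}$ directed in clause (3), or note $\psi(a,a)=\psi(a,a)\vee\psi(a,a)$ forces nothing but membership is still clear from abstractness of $\EuScript{M}(\psi,\alpha)$), so $[a,a]\in\EuScript{M}(\psi,\alpha)$ as well. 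Hence the trivial intervals lie in both sides.

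For a non-trivial interval $[a,b]$, by clause (ii) of the definition of $\mathfrak{d}^{\psi}_{h}$ we have
$$\mathfrak{d}^{\psi}_{h}(a,b)=\inf\{0\leq\iota\leq\propto(\Lambda)\mid\psi(a,b)\leq h(\iota)\}=\inf\{0\leq\iota\leq\propto(\Lambda)\mid\psi(a,b)\leq\alpha\},$$
since $h(\iota)=\alpha$ for all $\iota$. Thus the set over which the infimum is taken is either all of $\infty(\Lambda)$ (when $\psi(a,b)\leq\alpha$) or empty (when $\psi(a,b)\not\leq\alpha$). In the first case $\mathfrak{d}^{\psi}_{h}(a,b)=0$, so $[a,b]\in\EuScript{M}(\mathfrak{d}^{\psi}_{h},0)$, and also $\psi(a,b)\leq\alpha$ gives $[a,b]\in\EuScript{M}(\psi,\alpha)$. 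In the second case the infimum is $\propto(\Lambda)>0$ (note $\propto(\Lambda)\neq 0$ since $\Lambda$ is nonempty), so $\mathfrak{d}^{\psi}_{h}(a,b)\not\leq 0$ and $[a,b]\notin\EuScript{M}(\mathfrak{d}^{\psi}_{h},0)$, while $\psi(a,b)\not\leq\alpha$ gives $[a,b]\notin\EuScript{M}(\psi,\alpha)$. In every case membership on the two sides agrees, so $\EuScript{M}(\mathfrak{d}^{\psi}_{h},0)=\EuScript{M}(\psi,\alpha)$, and since $\psi$ was arbitrary the diagram commutes.

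The only point requiring minor care is the bookkeeping with $\propto(\Lambda)$ and the empty-infimum convention (so that $\inf\emptyset=\propto(\Lambda)$ in $\infty(\Lambda)$, guaranteeing the "no" case is recorded correctly), and the observation that $\propto(\Lambda)\ne 0$; there is no real obstacle.
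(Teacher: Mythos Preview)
Your proof is correct and follows exactly the route the paper intends: the corollary is stated with a bare \qed, meaning it is meant to be read off from the definitions of $h^{\alpha}$, $\mathfrak{d}^{\psi}_{h}$, and $\EuScript{M}(\_,\_)$, and that is precisely what you do by comparing membership of an arbitrary interval in $\EuScript{M}(\mathfrak{d}^{\psi}_{h^{\alpha}},0)$ and $\EuScript{M}(\psi,\alpha)$. One tiny remark: for the trivial-interval case your appeal to ``abstractness'' is not quite the right word---what actually guarantees $[a,a]$ lies in both sides is Proposition~\ref{d10}, which places $\EuScript{M}(\psi,\alpha)$ in $\EuScript{C}(A)\subseteq\EuScript{B}(A)$, and every basic set contains $\EuScript{O}$; but this does not affect the argument.
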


The method developed in Proposition \ref{d13} is the idiomatic version of the one described in \cite{7}. Now remember from Section \ref{sec:sec2}, that we can construct certain operations over the base frame $\EuScript{B}(A)$ for any idiom $A$, that is, certain inflators $\EuScript{O}pr:\EuScript{B}(A)\rightarrow \EuScript{B}(A)$. With these we can define sequences $h_{\psi,\alpha}\in\rm{seq}(\Lambda)$, for each $\psi\in\rm{App}(A,\Lambda)$ and $\alpha\in\Lambda$, as follows:

\begin{itemize}
\item[(1)] $h_{\psi,\alpha}(0)=\alpha$.

\item[(2)] If $0< \iota<\propto(\Lambda)$, then 
$$h_{\psi,\alpha}(\iota)=h_{\psi,\alpha}(\iota-1)\vee\Big(\bigvee\left\{\psi(a,b)\;|\; [a,b]\in \EuScript{O}pr(\EuScript{M}(\psi,h_{\psi,\alpha}(\iota-1)))\right\}\Big).$$

\item[(3)] If $0< \iota<\propto(\Lambda)$ is a limit ordinal, then $h_{\psi,\alpha}(\iota)=\bigvee\left\{h_{\psi,\alpha}(\lambda)\;|\; \lambda< \iota\right\}$.
\end{itemize}

This sequence will be called the $\EuScript{O}pr$-\emph{filtration}. From this we can apply $\mathfrak{d}$ to $\psi$ and the $\EuScript{O}pr$-\emph{filtration} to obtain $\mathfrak{d}^{\psi}_{h_{\psi,\alpha}}\in \rm{App}(A,\infty(\Lambda))$. This aspect will be called the $(\psi,\alpha)$-\emph{dimension}, or $(\psi,\alpha)$-dim for short.

Now consider the particular case when $\Lambda=\EuScript{D}(A)$, and $A$ is an idiom. Then, take the $\EuScript{D}(A)$-aspect $\xi(\_)$  and the inflator $\EuScript{D}vs\circ \EuScript{O}pr:= \EuScript{K}pr$, and define a $(\mathcal{D},\EuScript{K}pr)$-filtration of any $\mathcal{D}\in \EuScript{D}(A)$ as follows:
\begin{itemize}
\item[(1)] $\EuScript{K}pr^{0}(\mathcal{D})=\mathcal{D}$.

\item[(2)] $\EuScript{K}pr^{\gamma+1}(\mathcal{D})=\EuScript{K}pr(\EuScript{K}pr^{\gamma}(\mathcal{D}))$.

\item[(3)] $\EuScript{K}pr^{\lambda}(\mathcal{D})=\EuScript{D}vs(\bigcup\left\{\EuScript{K}pr^{\beta}(\mathcal{D})\;|\; \beta<\lambda\right\})$.
For each ordinal $\gamma$ and limit ordinal $\lambda$.
\end{itemize}
\begin{prop}
\label{d15}
Let $A$ be an idiom. With the above  notations we have that the $\EuScript{O}pr$-filtration and the $(\mathcal{D},\EuScript{K}pr)$-filtration are the same.
\end{prop}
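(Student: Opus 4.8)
The plan is to show that the two filtrations coincide by tracking the dictionary between $\EuScript{D}(A)$ and $N(A)$ (Theorem \ref{00}), under which the $\EuScript{D}(A)$-aspect $\xi$ and the map $\EuScript{M}(\xi,\_)$ become essentially identity-like correspondences. First I would unwind the $\EuScript{O}pr$-filtration specialized to $\Lambda=\EuScript{D}(A)$, $\psi=\xi$, and $\alpha=\mathcal{D}$: by construction $h_{\xi,\mathcal{D}}(0)=\mathcal{D}$, and the successor step adjoins $\bigvee\{\xi(a,b)\mid [a,b]\in\EuScript{O}pr(\EuScript{M}(\xi,h_{\xi,\mathcal{D}}(\iota-1)))\}$, where the join is taken in the frame $\EuScript{D}(A)$. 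The key computational observation is that $\EuScript{M}(\xi,\mathcal{E})=\{[a,b]\mid \xi(a,b)\leq\mathcal{E}\}=\{[a,b]\mid\mathcal{D}(a,b)\subseteq\mathcal{E}\}=\mathcal{E}$ for any $\mathcal{E}\in\EuScript{D}(A)$, since $\mathcal{D}(a,b)$ is the least division set containing $[a,b]$ and $\mathcal{E}$ is itself a division set; likewise $\bigvee\{\xi(a,b)\mid [a,b]\in\mathcal{F}\}=\EuScript{D}vs(\mathcal{F})$ for any basic (in particular any $\EuScript{O}pr$-image) set $\mathcal{F}$, because the supremum in $\EuScript{D}(A)$ is $\EuScript{D}vs$ applied to the union and $\bigcup_{[a,b]\in\mathcal{F}}\mathcal{D}(a,b)$ generates the same division set as $\mathcal{F}$.

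With these two identities in hand, the successor step becomes
$$h_{\xi,\mathcal{D}}(\iota)=h_{\xi,\mathcal{D}}(\iota-1)\vee \EuScript{D}vs\big(\EuScript{O}pr(h_{\xi,\mathcal{D}}(\iota-1))\big),$$
and since $\EuScript{O}pr$ is an inflator we have $h_{\xi,\mathcal{D}}(\iota-1)\subseteq\EuScript{O}pr(h_{\xi,\mathcal{D}}(\iota-1))$, so the join collapses to $\EuScript{D}vs(\EuScript{O}pr(h_{\xi,\mathcal{D}}(\iota-1)))=\EuScript{K}pr(h_{\xi,\mathcal{D}}(\iota-1))$. This is exactly clause (2) of the $(\mathcal{D},\EuScript{K}pr)$-filtration. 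So I would then run an induction on the ordinal $\iota$: the base case $\iota=0$ is immediate, the successor case is the computation just described, and the limit case follows because the supremum in $\rm{seq}(\EuScript{D}(A))$ at a limit ordinal is the join in $\EuScript{D}(A)$ of the previous values, which is $\EuScript{D}vs$ of their union — precisely clause (3) of the $(\mathcal{D},\EuScript{K}pr)$-filtration. Hence $h_{\xi,\mathcal{D}}(\iota)=\EuScript{K}pr^{\iota}(\mathcal{D})$ for every $\iota$, which is the assertion.

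I expect the main obstacle to be bookkeeping rather than conceptual: one must be careful that all joins in the $\EuScript{O}pr$-filtration are interpreted in the frame $\EuScript{D}(A)=\Lambda$ (not in $\EuScript{B}(A)$), and verify that $\bigvee\{\xi(a,b)\mid [a,b]\in\mathcal{F}\}$ really equals $\EuScript{D}vs(\mathcal{F})$ — this uses that $\EuScript{D}vs$ is a nucleus on $\EuScript{B}(A)$ with fixed-set $\EuScript{D}(A)$ and that $\EuScript{D}vs(\bigcup_{[a,b]\in\mathcal{F}}\mathcal{D}(a,b))=\EuScript{D}vs(\mathcal{F})$, which in turn follows from $\mathcal{D}(a,b)=\EuScript{D}vs(\mathcal{B}_{[a,b]})$ where $\mathcal{B}_{[a,b]}$ is the basic set generated by the single interval $[a,b]$, together with monotonicity and idempotence of $\EuScript{D}vs$. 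Once that lemma is isolated, the identification of the two successor rules and the two limit rules is routine, and the theorem follows by transfinite induction.
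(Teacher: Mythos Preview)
Your proposal is correct and follows essentially the same route as the paper: transfinite induction on $\iota$, with the two key identifications $\EuScript{M}(\xi,\mathcal{E})=\mathcal{E}$ for $\mathcal{E}\in\EuScript{D}(A)$ and $\bigvee\{\xi(a,b)\mid [a,b]\in\mathcal{F}\}=\EuScript{D}vs(\mathcal{F})$, then collapsing the successor step via the inflator property of $\EuScript{O}pr$. The only cosmetic difference is that the paper verifies the second identity by a direct interval-chasing argument using the description of $\EuScript{D}vs$ in Theorem~\ref{000}, whereas you deduce it more abstractly from $\EuScript{D}vs$ being a nucleus with fixed set $\EuScript{D}(A)$; both arguments are valid and yours is arguably cleaner.
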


\begin{proof}
We proceed by induction over the ordinals $\gamma$ and limit ordinals $\lambda$. 
The case $\gamma=0$ is trivial. 
For the induction step $\gamma\mapsto \gamma+1$ observe that by definition of the sequence $$h_{\xi(\_),\mathcal{D}}(\gamma)=h_{\xi(\_),\mathcal{D}}(\gamma-1)\vee\left(\bigvee\left\{\xi(a,b)\;|\; [a,b]\in \EuScript{O}pr(\EuScript{M}(\xi(\_),h_{\xi(\_),\mathcal{D}}(\gamma-1)))\right\}\right)$$ we have that the congruence set $\EuScript{M}(\xi(\_),h_{\xi(\_),\mathcal{D}}(\gamma-1))=\EuScript{M}(\xi(\_),\EuScript{K}pr^{\gamma-1}(\mathcal{D}))$. Recall that $[a,b]\in\EuScript{M}(\xi(\_),h_{\xi(\_),\mathcal{D}}(\gamma-1))$ precisely when $\xi(a,b)\leq h_{\xi(\_),\mathcal{D}}(\gamma-1)$, that is, $\xi(a,b)\subseteq\EuScript{K}pr^{\gamma-1}(\mathcal{D})$.  Thus, $\EuScript{M}(\xi(\_),h_{\xi(\_),\mathcal{D}}(\gamma-1))=\EuScript{K}pr^{\gamma-1}(\mathcal{D})$, and from this and the induction hypothesis we obtain that  
$$h_{\xi(\_),\mathcal{D}}(\gamma)=\EuScript{K}pr^{\gamma-1}(\mathcal{D})\vee\left(\bigvee\left\{\xi(a,b)\;|\; [a,b]\in \EuScript{O}pr(\EuScript{K}pr^{\gamma-1}(\mathcal{D}))\right\}\right)$$
 in $\EuScript{D}(A)$, that is, 
$$\EuScript{D}vs(\EuScript{K}pr^{\gamma-1}(\mathcal{D})\cup\EuScript{D}vs(\bigcup\left\{\xi(a,b)\;|\; [a,b]\in \EuScript{O}pr(\EuScript{K}pr^{\gamma-1}(\mathcal{D}))\right\}))=$$ $$=\EuScript{D}vs(\bigcup\left\{\xi(a,b)\;|\; [a,b]\in \EuScript{O}pr(\EuScript{K}pr^{\gamma-1}(\mathcal{D}))\right\})$$ because $\EuScript{K}pr^{\gamma-1}(\mathcal{D})\subseteq \EuScript{O}pr(\EuScript{K}pr^{\gamma-1}\mathcal{D})$. 

Let $\mathcal{B}=\EuScript{D}vs(\bigcup\left\{\xi(a,b)\;|\; [a,b]\in \EuScript{O}pr(\EuScript{K}pr^{\gamma-1}(\mathcal{D}))\right\})$. By the description of the $\EuScript{D}vs(\_)$ construction in Theorem \ref{000} for any basic set of intervals, for an interval $[a,b]\in\mathcal{B}$, there exists a proper subinterval $[x,y]$ of $[a,b]$ such that $[x,y]\in \bigcup\left\{\xi(a,b)\;|\; [a,b]\in \EuScript{O}pr(\EuScript{K}pr^{\gamma-1}(\mathcal{D}))\right\}$. Thus, $[x,y]\in \xi(a',b')$ for some $[a',b']\in \EuScript{O}pr(\EuScript{K}pr^{\gamma-1}(\mathcal{D}))$.  From this we can find a subinterval of $[a',b']$ isomorphic to a subinterval of $[x,y]$, say $I$, and this interval is in $\EuScript{O}pr(\EuScript{K}pr^{\gamma-1}(\mathcal{D}))$. This is the case when $[a,b]\in \EuScript{D}vs(\EuScript{O}pr(\EuScript{K}pr^{\gamma-1}(\mathcal{D})))$. Therefore $\mathcal{B}\subseteq \EuScript{D}vs(\EuScript{O}pr(\EuScript{K}pr^{\gamma-1}(\mathcal{D})))$.  The other inclusion is clear. Thus, from the definition of the $\mathcal{D}-\EuScript{K}pr$-filtration we conclude that $\EuScript{K}pr^{\gamma+1}(\mathcal{D})=\EuScript{K}pr(\EuScript{K}pr^{\gamma}(\mathcal{D}))=h_{\xi(\_),\mathcal{D}}(\gamma)$.

Now, for the limit case, we have 
\begin{align*}
h_{\xi,\mathcal{D}}(\lambda)&=\bigvee\left\{h_{\xi,\mathcal{D}}(\beta)\;|\; \beta< \gamma\right\}
=\EuScript{D}vs\left(\bigcup\left\{h_{\xi,\mathcal{D}}(\beta)\;|\; \beta< \gamma\right\}\right)\\
&=\EuScript{D}vs\left(\bigcup\left\{\EuScript{K}pr^{\beta}(\mathcal{D})\;|\; \beta< \gamma\right\}\right)=\EuScript{K}pr^{\lambda}(\mathcal{D}),
\end{align*}
where the second equality is by definition of suprema in $\EuScript{D}(A)$ and the induction hypothesis, the third equality is by the definition of the filtration on $\EuScript{K}pr$ with respect to $\mathcal{D}$ in the limit case.
\end{proof}

Recall that for any  basic set $\mathcal{B}$ on $A$, $\EuScript{C}rt(\mathcal{B})$ is the set of intervals such that for all $a\leq x\leq b$ we have $a=x$ or $[x,b]\in\mathcal{B}$.  This is the set of all $\mathcal{B}$-\emph{critical} intervals. Now, denote by $\EuScript{F}ll(\mathcal{B})$ the set of all intervals $[a,b]$ such that, for all $a\leq x\leq b$ there exists $a\leq y\leq b$ with $a=x\wedge y$ and $[x\vee y,b]\in\mathcal{B}$.  This is the set of all $\mathcal{B}$-\emph{full} intervals. Then,  we consider the $\EuScript{C}tr$-\emph{filtration} and the $\EuScript{F}ll$-filtration in $\rm{seq}(A)$.

Recall that the Gabriel derivative is given by $\EuScript{G}ab(\_):=\EuScript{D}vs\circ\EuScript{C}rt$, and the Boyle derivative is $\EuScript{B}oy(\_):=\EuScript{D}vs\circ\EuScript{F}ll$ on $\EuScript{D}(A)$.  Then, we can construct the $\mathcal{D}-\EuScript{G}ab$-filtration and the $\mathcal{D}-\EuScript{B}oy$-filtration, for some division set $\mathcal{D}$ on $A$. With these and Proposition \ref{d15} the following is immediate.

\begin{cor}
\label{d16}
If  $A$ is an idiom, then the $\EuScript{C}rt$-filtration is exactly the $\mathcal{D}$-Gabriel filtration and the $\EuScript{F}ll$-filtration is exactly the $\mathcal{D}$-Boy filtration.
\qed
\end{cor}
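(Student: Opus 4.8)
The plan is to unwind all the definitions so that Corollary \ref{d16} becomes a direct instance of Proposition \ref{d15}. The key observation is that the $\EuScript{C}rt$-filtration and the $\EuScript{F}ll$-filtration, as they were introduced just above the corollary, are exactly the $\EuScript{O}pr$-filtrations obtained by taking $\Lambda=\EuScript{D}(A)$, the aspect $\psi=\xi(\_)$, and the base-frame inflator $\EuScript{O}pr$ equal to $\EuScript{C}rt(\_)$ in the first case and $\EuScript{F}ll(\_)$ in the second. Once this identification is made, Proposition \ref{d15} tells us that this $\EuScript{O}pr$-filtration coincides with the corresponding $(\mathcal{D},\EuScript{K}pr)$-filtration, where $\EuScript{K}pr=\EuScript{D}vs\circ\EuScript{O}pr$. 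So the first step is simply to record that $\EuScript{D}vs\circ\EuScript{C}rt = \EuScript{G}ab$ and $\EuScript{D}vs\circ\EuScript{F}ll = \EuScript{B}oy$, which are precisely the definitions of the Gabriel and Boyle derivatives recalled immediately before the statement.

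Concretely, I would proceed as follows. First, specialize Proposition \ref{d15} to the inflator $\EuScript{O}pr = \EuScript{C}rt(\_)$ on $\EuScript{B}(A)$: it yields that the $\EuScript{C}rt$-filtration (an element of $\rm{seq}(\EuScript{D}(A))$ built from $\xi(\_)$ as in items (1)--(3) defining $h_{\psi,\alpha}$) agrees term-by-term with the $(\mathcal{D},\EuScript{K}pr)$-filtration for $\EuScript{K}pr = \EuScript{D}vs\circ\EuScript{C}rt$. Second, observe that $\EuScript{D}vs\circ\EuScript{C}rt$ is by definition the Gabriel derivative $\EuScript{G}ab(\_)$, so the $(\mathcal{D},\EuScript{K}pr)$-filtration is literally the $\mathcal{D}$-Gabriel filtration described just before the corollary. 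Combining these two identifications gives the first assertion. Third, repeat the identical argument with $\EuScript{O}pr = \EuScript{F}ll(\_)$ and $\EuScript{K}pr = \EuScript{D}vs\circ\EuScript{F}ll = \EuScript{B}oy(\_)$ to obtain the second assertion.

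Since the paper has already done the substantive work in Proposition \ref{d15}, there is essentially no remaining obstacle here — the corollary is a bookkeeping exercise in matching names. The only point that needs a sentence of care is confirming that the two inflators $\EuScript{C}rt(\_)$ and $\EuScript{F}ll(\_)$ are legitimate choices for the abstract $\EuScript{O}pr$ used in Proposition \ref{d15}; this is guaranteed by the facts recalled in Section \ref{sec:sec2} (and in the Example block) that $\EuScript{C}rt(\_),\EuScript{F}ll(\_)\in P(\EuScript{B}(A))\subseteq S(\EuScript{B}(A))\subseteq I(\EuScript{B}(A))$, so in particular they are inflators on the base frame, which is all that the $\EuScript{O}pr$-filtration construction requires. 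With that remark in place, the proof is: \emph{Apply Proposition \ref{d15} with $\EuScript{O}pr=\EuScript{C}rt(\_)$, respectively $\EuScript{O}pr=\EuScript{F}ll(\_)$, and use $\EuScript{G}ab(\_)=\EuScript{D}vs\circ\EuScript{C}rt$, respectively $\EuScript{B}oy(\_)=\EuScript{D}vs\circ\EuScript{F}ll$.}
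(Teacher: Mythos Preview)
Your proposal is correct and matches the paper's approach exactly: the paper gives no proof beyond the \qed, treating the corollary as an immediate specialization of Proposition \ref{d15} with $\EuScript{O}pr=\EuScript{C}rt(\_)$ (so $\EuScript{K}pr=\EuScript{G}ab$) and $\EuScript{O}pr=\EuScript{F}ll(\_)$ (so $\EuScript{K}pr=\EuScript{B}oy$). Your added remark that $\EuScript{C}rt(\_),\EuScript{F}ll(\_)\in P(\EuScript{B}(A))$ are legitimate inflators is a helpful sanity check but is not needed beyond what the paper already assumes.
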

%%%%%%%%%%%%%%%%%%%%%%%%%%%%%

\section{ Dimensions in categories of modules }\label{sec:sec6}

In  \cite{7} the following framework is introduced  to deal with most dimensions in module theory. Recall some of that material. Fix a complete lattice $\Gamma$ and consider a ring $R$, and the category of left modules $R$-$\Mod$.

\begin{dfn}
\label{d17}
A \emph{quasi-dimension function} in $R$-$\Mod$ is a function $R\text{\rm -Mod}\stackrel{D}\longrightarrow \Gamma$ that satisfies:
\begin{enumerate}
\item If $0\rightarrow N\rightarrow M \rightarrow K\rightarrow 0$ is a exact sequence in $R$-$\Mod$ then
$D(M)=D(N)\vee D(K)$.
\item
If $M$ is a module which is a directed union of a directed family $\{N_{i}\;|\; i\in \Omega\}$ of submodules of $M$, then $D(M)=\bigvee\left\{D(N_{i})\;|\; i\in \Omega\right\}$.
\end{enumerate}

If $D(M)=\bot$ $\Leftrightarrow$ $M=0$,  we say that $D$ is of \emph{pre-dimension}. If the image of $D$ is  linearly ordered, we say that $D$ is \emph{linear}. A linear pre-dimension function will be called a \emph{dimension function}.
\end{dfn}

Let $Q$-$\rm{dim}(R,\Gamma)$ be the collection of all quasi-dimension functions in $R$-$\Mod$ with values in $\Gamma$. Let $R$-$\rm{mod}$ be the set of isomorphism classes of finitely generated modules.  It is easily seen that any quasi-dimension function is completely determined by its values in $R$-$\rm{mod}$.  Thus, $Q$-$\rm{dim}(R,\Gamma)$ is a set, and, in fact, a complete lattice.

Denote by $\Lambda(M)$ the idiom of sub-modules of an $R$-module $M$. 

Observe that any $D\in Q\text{-}\dim(R,\Gamma)$ defines a $\Gamma$-aspect of each module as follows: Take any module $M$ and let  $D_{M}:\EuScript{I}(\Lambda(M))\longrightarrow\Gamma$ be defined by $D_{M}(K,L)=D(L/K)$. The definition of quasi-dimension function implies that the function $D_{M}$ is a $\Gamma$-aspect for $\Lambda(M)$. In particular, $D_{R}$ defines a $\Gamma$-aspect for $\Lambda(R)$.

\begin{lem}
\label{d18}
For each $R$-module $M$ we have a morphism of complete lattices
 $$[M]: Q\text{-}\dim(R,\Gamma)\longrightarrow\rm{App}(M,\Gamma)$$ given by $[M](D)=D_{M}$. 
\end{lem}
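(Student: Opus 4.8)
The goal is to show that the assignment $D \mapsto D_M$ defines a morphism of complete lattices $[M]\colon Q\text{-}\dim(R,\Gamma)\longrightarrow \rm{App}(M,\Gamma)$. The plan splits into three parts: (i) verify that $D_M$ genuinely lies in $\rm{App}(M,\Gamma)$, i.e.\ that it is a $\Gamma$-aspect of the idiom $\Lambda(M)$; (ii) verify that $D\mapsto D_M$ is monotone for the respective orders; and (iii) verify that it preserves arbitrary suprema (equivalently, arbitrary infima, since both are complete lattices and a monotone map between complete lattices that preserves all joins is the required kind of morphism here).

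\textbf{Step (i): $D_M$ is a $\Gamma$-aspect.} I would check the three conditions of Definition \ref{d7} directly, translating each module-theoretic fact into the lattice of submodules. For condition (1), given $r,l\in\Lambda(M)$ the canonical isomorphism $r/(l\wedge r)\cong (l\vee r)/l$ (second isomorphism theorem) gives $D_M(l\wedge r,r)=D((r)/(l\wedge r))=D((l\vee r)/l)=D_M(l,l\vee r)$. For condition (2), given submodules $a\leq c\leq b$ of $M$, the short exact sequence $0\to c/a\to b/a\to b/c\to 0$ together with axiom (1) of Definition \ref{d17} yields $D(b/a)=D(c/a)\vee D(b/c)$, i.e.\ $D_M(a,b)=D_M(a,c)\vee D_M(c,b)$. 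For condition (3), given $a\in\Lambda(M)$ and a directed $X\subseteq[a,\bar 1]$, the quotient $(\bigvee X)/a$ is the directed union of the submodules $x/a$, $x\in X$, so axiom (2) of Definition \ref{d17} gives $D((\bigvee X)/a)=\bigvee\{D(x/a)\mid x\in X\}$, i.e.\ $D_M(a,\bigvee X)=\bigvee\{D_M(a,x)\mid x\in X\}$. This part is essentially bookkeeping; the only subtlety worth stating is that directed unions of submodules correspond to directed joins in $\Lambda(M)$, which is exactly the upper-continuity built into the idiom structure, so the hypothesis of axiom (2) is met.

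\textbf{Steps (ii) and (iii): lattice-morphism properties.} Monotonicity is immediate: if $D\leq D'$ in $Q\text{-}\dim(R,\Gamma)$, meaning $D(N)\leq D'(N)$ for every module $N$, then in particular $D(L/K)\leq D'(L/K)$ for all $K\leq L\leq M$, so $D_M\leq D'_M$ pointwise on $\EuScript{I}(\Lambda(M))$, which is the order of $\rm{App}(M,\Gamma)$. For preservation of suprema, recall that both in $Q\text{-}\dim(R,\Gamma)$ and in $\rm{App}(M,\Gamma)$ suprema are computed pointwise (this is the standard description of joins in these complete lattices, analogous to the pointwise description of infima for the inflator lattices recalled in Section \ref{sec:sec2}; one should note it, since a priori the join in $\rm{App}$ need not be pointwise, but here it is because a pointwise supremum of $\Gamma$-aspects is again a $\Gamma$-aspect). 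Then for a family $\{D_j\}_{j\in J}$ with $D=\bigvee_j D_j$, evaluating at the interval $[K,L]$ gives $D_M(K,L)=D(L/K)=\bigvee_j D_j(L/K)=\bigvee_j (D_j)_M(K,L)=\big(\bigvee_j (D_j)_M\big)(K,L)$, so $[M]$ preserves joins; dually it preserves meets, by the same pointwise computation. Hence $[M]$ is a morphism of complete lattices.

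\textbf{Main obstacle.} The only genuinely non-formal point is Step (iii): one must confirm that the supremum in $\rm{App}(M,\Gamma)$ of a family of $\Gamma$-aspects is computed pointwise. This requires checking that the pointwise join of $\Gamma$-aspects still satisfies axioms (1)--(3) of Definition \ref{d7} — in particular axiom (3), where one needs $\big(\bigvee_j\psi_j\big)(a,\bigvee X)=\bigvee_j\bigvee_{x\in X}\psi_j(a,x)=\bigvee_{x\in X}\big(\bigvee_j\psi_j\big)(a,x)$, which is just commutativity of suprema in the complete lattice $\Gamma$, plus the analogous (easier) verification for axiom (2). Once this is in place — and it is the same observation already used implicitly when asserting that $\rm{App}(A,\Lambda)$ is a complete lattice — everything else is a direct transcription of the module-theoretic axioms via the dictionary ``interval $[K,L]$ in $\Lambda(M)$ $\leftrightarrow$ subquotient $L/K$ of $M$''.
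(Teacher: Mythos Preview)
Your proposal is correct and follows essentially the same approach as the paper: the paper records Step~(i) as the remark immediately preceding the lemma, and its proof is exactly your pointwise computation in Step~(iii), first for suprema and then (``similarly'') for infima. Your treatment is in fact more careful than the paper's, since you explicitly flag and resolve the one nontrivial point---that suprema in $\rm{App}(M,\Gamma)$ are computed pointwise---which the paper's argument uses without comment.
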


\begin{proof}
Let  $\mathfrak{D}\subseteq Q$-$\dim(R,\Gamma)$ be a family of quasi-dimensions functions. Then, 
$[M](\bigvee\mathfrak{D})=\bigvee\mathfrak{D}_{M}$, and thus 
\begin{align*}
\big(\bigvee\mathfrak{D}_{M}\big)(K,L)&=\big(\bigvee\mathfrak{D}\big)(L/K)=\bigvee\left\{D(L/K)\;|\;D\in\mathfrak{D}\right\}\\
&=\bigvee\left\{D_{M}(K,L)\;|\;D\in\mathfrak{D}\right\}=\bigvee\left\{[M](D)\;|\;D\in\mathfrak{D}\right\},
\end{align*}
as required. For $[M](\bigwedge\mathfrak{D})=\bigwedge\left\{[M](D)\;|\;D\in\mathfrak{D}\right\}$ the proof is similar.
\end{proof}

Denote by $\mathbb{B}(R)$ the collection of all classes of modules closed under isomorphism, sub-modules and quotients.  This is the \emph{base frame} of $R$-$\Mod$ thus as in \cite{23} we can do the idiomatic constructions of $\mathbb{C}(R)$ and $\mathbb{D}(R)$. Note that this frames are the classes of all Serre classes and the frame of all hereditary torsion classes in $R$-$\Mod$, in particular we can identify $\mathbb{D}(R)$ with $R$-$\tors$. 

As an example, consider the quasi-dimension function $\upxi\in Q$-$\dim(R,\mathbb{D(R))}$, take an left $R$-module $M$ and the induced $\mathbb{D}(R)$-aspect $[M](\upxi)=\xi_{M}$.

The situations described by Proposition \ref{d15} and Corollary \ref{d16} are precisely the ones exposed in Golan lecture notes \cite{7}, but in our context. 
  In the inflator context, for example, consider any basic class of modules $\mathcal{B}\in\mathbb{B}(R)$, and let $\mathcal{C}rt(\mathcal{B})$ be the class of modules $M$ such that for all $N\subseteq M$, $N=0$ or $M/N\in\mathcal{B}$.  The $\mathcal{C}rt$-filtration in $\mathbb{D}(R)$ is just the well-known Gabriel filtration given by the pre-nucleus $\mathcal{G}ab:=\mathbb{D}vs\circ\mathcal{C}rt$ in $\mathbb{D}(R)$.

Given a class of modules, $\mathcal{B}$ and a module $M$, as in \cite{26} we define  the {\it slice} of $\mathcal{B}$ by $M$ as the set  $\langle M\rangle(\mathcal{B})$ by $[K,L]\in\langle M\rangle(\mathcal{B})\Leftrightarrow L/K\in\mathcal{B}$. We know that 
\begin{enumerate} 
\item If $ \mathcal{B}\in\mathbb{B}(R)$, then $\left\langle M\right\rangle(\mathcal{B})\in\EuScript{B}(\Lambda(M))$. 
\item If $\mathcal{C}\in\mathbb{C}(R)$, then $\left\langle M\right\rangle(\mathcal{C})\in\EuScript{C}(\Lambda(M))$. 
\item If $\mathcal{D}\in\mathbb{D}(R)$, then $ \left\langle M\right\rangle(\mathcal{D})\in\EuScript{D}(\Lambda(M))$. 
\end{enumerate}

Slicing by a module $M$ determines a morphism of frames 
$$\left\langle M\right\rangle(\_):\mathbb{B}(R)\longrightarrow\EuScript{B}(\Lambda(M)),$$ 
in fact, we have the morphism  
$$\left\langle M\right\rangle(\_):\mathbb{D}(R)\longrightarrow\EuScript{D}(\Lambda(M)).$$

Also from Proposition \ref{d10} we have,  for the idiom $\Lambda(M)$, a morphism of complete lattices, $\EuScript{N}_{a}:\rm{App}(\Lambda(M),\Gamma)\longrightarrow\EuScript{D}(\Lambda(M))$, where $\EuScript{N}_{a}:=\EuScript{D}vs\circ\EuScript{M}_{a}$ for each $a\in\Gamma$. 
Now from \cite{7}, for each $a\in\Gamma$ there is a morphism of complete lattices $\mathcal{N}_{a}:Q\rm{-dim}(R,\Gamma)\longrightarrow \mathbb{D}(R)^{op}$ given by $M\in\EuScript{T}_{\mathcal{N}(D,a)}$ if and only if $D(M)\leq a$. With all these we can state the following:

\begin{thm}
\label{d19}
Let $\Gamma$ be a complete lattice, and $R$-$\Mod$ the category of modules over a ring $R$. For each $a\in\Gamma$ and each $R$-module $M$ the following square 
$$\xymatrix{ Q\text{-}\dim(R,\Gamma)\ar[r]^{[M](\_)}\ar[d]_{\mathcal{N}_{a}} &  \rm{App}(M,\Gamma)\ar[d]^{\EuScript{N}_{a}} \\ \mathbb{D}(R)^{\text{\rm op}}\ar[r]_{\left\langle M\right\rangle(\_)} & \EuScript{D}(\Lambda(M))^{\text{\rm op}} \\  }$$ 
commutes in the category of complete lattices.
\end{thm}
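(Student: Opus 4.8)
The statement asks us to verify that a certain square of complete-lattice morphisms commutes; since all four arrows are already known to be morphisms of complete lattices (from Lemma \ref{d18}, Proposition \ref{d10}, the discussion of slicing, and the citation of \cite{7} for $\mathcal{N}_a$), the only thing left is to check equality of the two composite maps $Q\text{-}\dim(R,\Gamma)\to\EuScript{D}(\Lambda(M))^{\mathrm{op}}$ on an arbitrary element. So the plan is: fix $a\in\Gamma$, fix an $R$-module $M$, and fix $D\in Q\text{-}\dim(R,\Gamma)$; then unwind both composites applied to $D$ and show they produce the same division set of intervals of $\Lambda(M)$. Because $\EuScript{D}(A)$ is determined by membership of intervals, it suffices to prove, for every interval $[K,L]$ of $\Lambda(M)$, that $[K,L]\in \EuScript{N}_a([M](D))$ if and only if $[K,L]\in \left\langle M\right\rangle(\mathcal{N}_a(D))$.

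The key computation is a chain of biconditionals. Going down-then-right: $\mathcal{N}_a(D)$ is, by definition, the hereditary torsion class $\EuScript{T}_{\mathcal{N}(D,a)}$ with $N\in\mathcal{N}_a(D)\Leftrightarrow D(N)\le a$; then $\left\langle M\right\rangle(\mathcal{N}_a(D))$ consists of those $[K,L]$ with $L/K\in\mathcal{N}_a(D)$, i.e.\ with $D(L/K)\le a$. Going right-then-down: $[M](D)=D_M$ with $D_M(K,L)=D(L/K)$; then $\EuScript{M}_a(D_M)$ consists of the intervals $[K,L]$ with $D_M(K,L)=D(L/K)\le a$, and finally $\EuScript{N}_a(D_M)=\EuScript{D}vs(\EuScript{M}_a(D_M))$. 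Thus the two composites agree precisely when $\EuScript{D}vs(\EuScript{M}_a(D_M))$ equals $\left\langle M\right\rangle(\mathcal{N}_a(D))$, both of which "want" to be the set of $[K,L]$ with $D(L/K)\le a$. The content of the proof is therefore showing that the set $\{[K,L] : D(L/K)\le a\}$ is already a division set in $\Lambda(M)$ (so that applying $\EuScript{D}vs$ does nothing), and that $\left\langle M\right\rangle$ carries the hereditary torsion class $\mathcal{N}_a(D)$ exactly onto this set. The second point is immediate from the definition of slicing together with item (3) of the listed properties of $\left\langle M\right\rangle(\_)$, which guarantees $\left\langle M\right\rangle(\mathcal{D})\in\EuScript{D}(\Lambda(M))$ for $\mathcal{D}\in\mathbb{D}(R)$; so the crux is the first point.

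To see that $\mathcal{B}:=\{[K,L] : D(L/K)\le a\}$ is a division set, I would use exactly the axioms of a quasi-dimension function translated through $D_M$: closure under $\sim$ and under subintervals and abutting intervals is precisely what was checked in the proof of Proposition \ref{d10} (that $\EuScript{M}(\varphi,\alpha)\in\EuScript{C}(A)$ for a $\Lambda$-aspect $\varphi$), applied to $\varphi=D_M$, $\alpha=a$; so $\mathcal{B}=\EuScript{M}_a(D_M)$ is a congruence set. For the pre-division (hence division) property, suppose $[K,x_i]\in\mathcal{B}$ for all $i$ in a directed family with supremum $x$; then $D_M(K,x)=D_M(K,\bigvee x_i)=\bigvee D_M(K,x_i)\le a$ by axiom (3) of Definition \ref{d7} for $D_M$, so $[K,x]\in\mathcal{B}$. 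Hence $\mathcal{B}\in\EuScript{D}(\Lambda(M))$ and $\EuScript{D}vs(\mathcal{B})=\mathcal{B}$, completing the equality of the two composites. I expect the main (though still routine) obstacle to be the careful bookkeeping identifying $\left\langle M\right\rangle(\mathcal{N}_a(D))$ with $\mathcal{B}$: one must check that the quotients $L/K$ appearing as sections of $\Lambda(M)$ range over exactly the modules whose membership in $\mathcal{N}_a(D)$ is being tested, and invoke that $\mathcal{N}_a(D)$ being a hereditary torsion class ensures the slice lands in $\EuScript{D}(\Lambda(M))$ so no further $\EuScript{D}vs$-closure is hidden on that side either — a point already recorded as item (3) in the properties of slicing.
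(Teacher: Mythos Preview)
Your proof is correct and follows essentially the same route as the paper: both arguments reduce to the observation that $\EuScript{M}_a(D_M)=\{[K,L]:D(L/K)\le a\}=\langle M\rangle(\mathcal{N}_a(D))$ and that this common set is already a division set, so that applying $\EuScript{D}vs$ in the right-then-down composite is the identity. The only presentational difference is that the paper reaches this conclusion via the Theorem~\ref{000} description of $\EuScript{D}vs$ (unwinding $[K,L]\in\EuScript{D}vs(\EuScript{M}_a(D_M))$ and matching the resulting condition against the known division set $\langle M\rangle(\mathcal{N}_a(D))$), whereas you verify directly from the aspect axioms that $\EuScript{M}_a(D_M)$ is a division set.

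One small point to tighten: the pre-division clause requires that $[K,x]\in\mathcal{B}$ for all $x$ in an \emph{arbitrary} $X\subseteq[K,\bar{1}]$ force $[K,\bigvee X]\in\mathcal{B}$, but axiom~(3) of Definition~\ref{d7} only gives you directed $X$. This is easily repaired: since you already know $\mathcal{B}=\EuScript{M}_a(D_M)$ is a congruence set, it is closed under binary joins above $K$ (the observation used just before Corollary~\ref{dtc5.}), so an arbitrary $X$ may be replaced by the directed family of its finite joins and your argument then applies. Alternatively, you can skip the direct verification entirely: once you have identified $\EuScript{M}_a(D_M)=\langle M\rangle(\mathcal{N}_a(D))$ as sets, item~(3) of the slicing properties already tells you this is a division set, and that is precisely how the paper closes the argument.
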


\begin{proof}
We already know that all morphisms in the diagram are morphisms in the category of complete lattices, hence we just  have to check the commutativity. Consider any $D\in Q\text{-}\dim(R,\Gamma)$, and recall that $\EuScript{N}_{a}([M](D))=\EuScript{D}vs(\EuScript{M}_{a}(D_{M})$. Then, $[K,L]\in\EuScript{N}_{a}([M](D))=\EuScript{D}vs(\EuScript{M}_{a}(D_{M})$ if and only if   for all  $K\leq H<L$ there exists  $H<N\leq M$  such that  $N/H\in\EuScript{M}_{a}(D_{M})$. That is, $D_{M}([H,N])=D(N/H)\leq a$. But this is the condition for  $N/H$ belonging to $\EuScript{T}_{\mathcal{N}_{a}(D)}$, and this happens precisely when $[H,N]\in\left\langle M\right\rangle(\mathcal{N}_{a}(D))$.  Thus, $[K,L]\in\left\langle M\right\rangle(\mathcal{N}_{a}(D))$. Therefore, $\EuScript{N}_{a}([M](D))=\left\langle M\right\rangle(\mathcal{N}_{a}(D))$, for all $D$, and  the diagram commutes. 
\end{proof}

\begin{cor}
\label{d20}
Let $\Gamma$ be a complete lattice, and consider the complete lattice $\infty(\Gamma)$.  Let $R$-$\Mod$ be the category of modules over a ring $R$. Then, for any $a\in\infty(\Gamma)$ and any left $R$-module $M$, the following diagram commutes in the category of complete lattices:
$$\xymatrix{ Q\text{-}\dim(R,\infty(\Gamma))\ar[r]^{[M](\_)}\ar[d]_{\mathcal{N}_{a}} &  \rm{App}(M,\infty(\Gamma))\ar[d]^{\EuScript{N}_{a}} \\ \mathbb{D}(R)^{\text{\rm op}}\ar[r]_{\left\langle M\right\rangle(\_)} & \EuScript{D}(\Lambda(M))^{\text{\rm op}}.  }$$ 
\qed
\end{cor}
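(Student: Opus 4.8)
The plan is to show that this corollary is a direct instance of Theorem \ref{d19}. The lattice $\infty(\Gamma)$, as defined in Section \ref{sec:sec5}, is itself a complete lattice: it is an initial segment of the ordinals bounded by $\propto(\Gamma)$, hence a complete chain, with top $\propto(\Gamma)$ and bottom $0$. Therefore one may simply apply Theorem \ref{d19} with the complete lattice $\Gamma$ replaced by $\infty(\Gamma)$. Concretely, first I would observe that all the objects appearing in the square — $Q\text{-}\dim(R,\infty(\Gamma))$, $\rm{App}(M,\infty(\Gamma))$, $\mathbb{D}(R)^{\rm op}$, $\EuScript{D}(\Lambda(M))^{\rm op}$ — and all four morphisms $[M](\_)$, $\EuScript{N}_a$, $\mathcal{N}_a$, $\left\langle M\right\rangle(\_)$ are exactly the ones from Theorem \ref{d19} formed with the complete lattice $\infty(\Gamma)$ in the rôle of $\Gamma$, and that $a\in\infty(\Gamma)$ is an admissible choice of parameter. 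Then the commutativity and the fact that all arrows are morphisms of complete lattices are immediate from Theorem \ref{d19}.

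The only point that genuinely needs checking — and the step I would flag as the (mild) obstacle — is that $\infty(\Gamma)$ really is a complete lattice, so that Theorem \ref{d19} is applicable to it. This follows because $\infty(\Gamma)=\{\kappa\mid \kappa\text{ ordinal},\ \kappa\leq\propto(\Gamma)\}$ is a set of ordinals closed downward and containing its supremum $\propto(\Gamma)$; any subset $S\subseteq\infty(\Gamma)$ has $\sup S=\bigcup S\in\infty(\Gamma)$ (this is an ordinal $\leq\propto(\Gamma)$) and $\inf S=\min S\in\infty(\Gamma)$ when $S\neq\emptyset$, while $\inf\emptyset=\propto(\Gamma)$ and $\sup\emptyset=0$. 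Hence $\infty(\Gamma)$ is a complete (indeed linearly ordered) lattice. With this in hand the corollary is just Theorem \ref{d19} specialized to $\Gamma':=\infty(\Gamma)$.

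Thus the proof reduces to one sentence: since $\infty(\Gamma)$ is a complete lattice, apply Theorem \ref{d19} with $\infty(\Gamma)$ in place of $\Gamma$; the asserted square is then precisely the square of that theorem, so it commutes in $\mathfrak{C}\mathfrak{L}$.
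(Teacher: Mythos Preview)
Your proposal is correct and matches the paper's approach: the corollary is stated with a bare \qed, i.e., it is treated as an immediate specialization of Theorem \ref{d19} with $\infty(\Gamma)$ in place of $\Gamma$. Your additional verification that $\infty(\Gamma)$ is a complete lattice is the only nontrivial ingredient, and it is exactly what justifies invoking Theorem \ref{d19}.
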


%As an example of this, 
\begin{ej}
Let $\Gamma$ be a complete lattice and consider $D\in Q\text{-}\dim(R,\Gamma)$, $a\in\Gamma$. Following   \cite[Chapter 12]{7}  define a sequence $k_{D,a}$, called the \emph{Gabriel filtration of } $(D,a)$, mimicking the one defined after our Corollary   \ref{d14}:

\begin{itemize}
\item[(1)] $k_{D,a}(0)=a$.

\item[(2)] If $0< \iota<\propto(\Gamma)$, then 

\begin{align*}
k_{D,a}(\iota)&=k_{D,a}(\iota-1)\vee(\bigvee\{D(M)\,|\, M\text{ is an } \mathcal{N}(D,k_{D,a}(\iota-1)))\text{-cocritical left }\\
&\qquad\qquad \qquad\qquad \qquad\qquad \qquad\qquad \qquad\qquad \qquad\qquad\qquad\text{module}\}).
\end{align*}

\item[(3)] If $0< \iota<\propto(\Gamma)$ is a limit ordinal, then $k_{D,a}(\iota)=\bigvee\left\{k_{D,a}(\lambda)\;|\; \lambda< \iota\right\}$.
\end{itemize}
Then, taking this filtration and applying the construction in Proposition \ref{d13},  in this context, we obtain the quasi-dimension function $\uplambda(D,k_{D,a})$ in  $Q\text{-}\dim(R,\infty(\Gamma))$. Now consider any left $R$-module $M$ and $[M](\uplambda(D,k_{D,a}))=\uplambda(D,k_{D,a})_{M}\in\rm{App}(M,\infty(\Gamma))$. Then, the $\infty(\Gamma)$-aspect for $M$ is just the $(D_{M},a)$-dimen\-sion. 
\end{ej}

\end{document}